\newcommand{\TITLE}{The sensual Apollonian circle packing}
\newcommand{\TITLERUNNING}{The sensual Apollonian circle packing}
\newcommand{\DATE}{\today}
\newcommand{\VERSION}{1}
\theoremstyle{plain} 
\newtheorem{theorem}{Theorem} 
\newtheorem{proposition}[theorem]{Proposition}
\newtheorem{lemma}[theorem]{Lemma}
\newtheorem{corollary}[theorem]{Corollary}
\theoremstyle{definition}
\newtheorem{definition}[theorem]{Definition}
\theoremstyle{remark}
\newtheorem*{acknowledgement}{Acknowledgements}
\numberwithin{theorem}{section}
\newcommand{\tightoverset}[2]{%
  \mathop{#2}\limits^{\vbox to -.5ex{\kern-1.05ex\hbox{$#1$}\vss}}}
\newcommand{\CC}{\mathbb{C}}
\newcommand{\PP}{\mathbb{P}}
\newcommand{\MM}{\mathbb{M}}
\newcommand{\QQ}{\mathbb{Q}}
\newcommand{\RR}{\mathbb{R}}
\newcommand{\ZZ}{\mathbb{Z}}
\renewcommand{\gcd}{{\operatorname{gcd}}}
\newcommand{\MOD}[1]{~(\textup{mod}~#1)}
\renewcommand{\pmod}{\MOD}
\renewcommand{\setminus}{\smallsetminus}
\renewcommand{\Re}{\operatorname{Re}}
\renewcommand{\Im}{\operatorname{Im}}
\newcommand\PSL{\operatorname{PSL}}
\newcommand\PGL{\operatorname{PGL}}
\newcommand\PGLZ{\PGL_2(\ZZ[i])}
\newcommand\SO{\operatorname{SO}^+_{1,3}(\ZZ)}
\newcommand\SOR{\operatorname{SO}^+_{1,3}(\RR)}
\newcommand\Soder{MR1178634}
\newcommand\Hirst{MR0209981}
\newcommand\Northshield{MR2250043}
\newcommand\BourgainFuchs{MR2813334}
\newcommand\FuchsBulletin{MR3020827}
\newcommand\AhSt{MR1466797}
\newcommand\ntone{MR1971245}
\newcommand\gttwo{MR2183489}
\newcommand\gtone{MR2173929}
\newcommand\bestsavin{MR2914631}
\newcommand\bk{1205.4416}
\newcommand\coxeter{MR0230204}
\newcommand\boyd{MR658230}
\newcommand\Conway{MR1478672}
\newcommand\beyond{MR1903421}
\newcommand\Sage{Sage}
\newcommand\BFuchs{MR2813334}
\title[\TITLERUNNING]{\vspace*{-1.3cm} \TITLE}
\author{Katherine E. Stange}
\date{\DATE, Draft \#\VERSION}
\address{%
Department of Mathematics, University of Colorado,
Campux Box 395, Boulder, Colorado 80309-0395}
\email{kstange@math.colorado.edu}
\keywords{Apollonian circle packings, projective linear group, topograph, Hermitian form, M\"obius transformation}
\subjclass[2010]{Primary: 52C26, 11E39, Secondary: 11E12, 11E16}
\thanks{The author's research has been supported by NSF MSPRF 0802915.
}
\begin{document}


\begin{abstract}
        The curvatures of the circles in integral Apollonian circle packings, named for Apollonius of Perga (262-190 BC), form an infinite collection of integers whose Diophantine properties have recently seen a surge in interest.  Here, we give a new description of Apollonian circle packings built upon the study of the collection of bases of $\ZZ[i]^2$, inspired by, and intimately related to, the `sensual quadratic form' of Conway.
\end{abstract}

\maketitle

\tableofcontents

\section{Introduction}

In their delightful monograph \emph{The Sensual Quadratic Form} \cite{\Conway}, Conway and Fung draw the following picture of $\PP^1(\QQ)$.

\begin{center}
\includegraphics[width=4in]{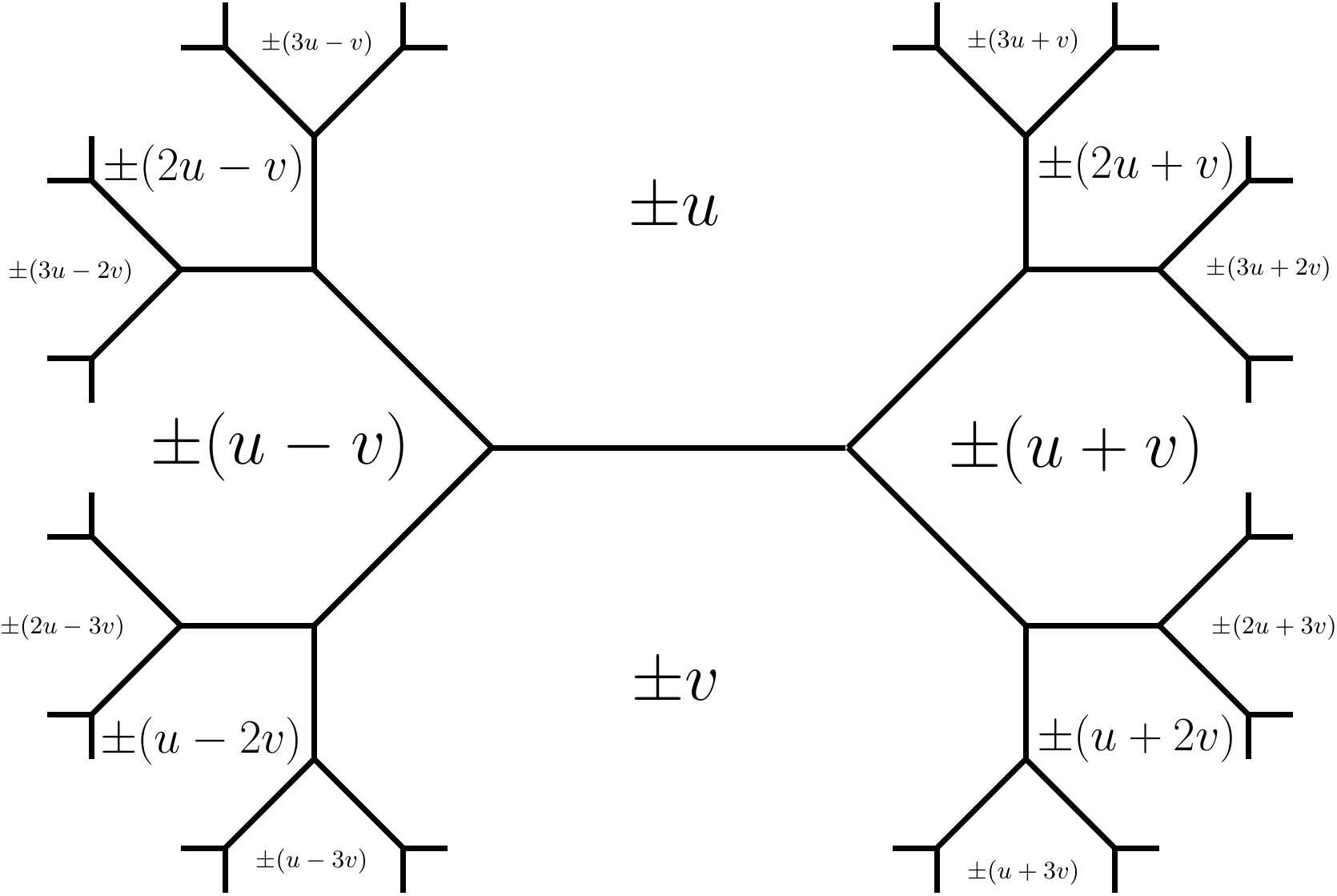}
\end{center}



This is but a small piece of an infinite froth of planar regions traced out by an infinite tree of valence three.  In the picture, $u$ and $v$ refer to any particular $\ZZ$-basis of $\ZZ^2$.  Each region is labelled with a what they termed a \emph{lax vector}, i.e. a primitive vector of $\ZZ^2$ (that is, having coprime coordinates) considered up to sign.  These lax vectors represent the elements of $\PP^1(\QQ)$.

A \emph{basis} of lax vectors is a pair such that, once signs are chosen, one obtains a $\ZZ$-basis in the usual sense.  A \emph{superbasis} is a triple of lax vectors, any two of which form a basis.  The branching graph, called the \emph{topograph}, can be defined as the graph whose edges correspond to bases of lax vectors, and whose vertices correspond to \emph{superbases}.  Vertices touch edges where superbases contain bases; an edge connects two vertices because any basis is contained in exactly two possible superbases.  That is, $\{ \pm u, \pm v \}$ is contained in $\{ \pm u, \pm v, \pm (u+v)\}$ and $\{ \pm u, \pm v, \pm (u-v) \}$.  

One then shows that the graph is a single infinite valence-three tree which can be made planar in such a way that it breaks up the plane into infinitely many infinite regions, the boundary of each consisting of exactly those edges and vertices containing a particular lax vector: we label the region with this vector to obtain the picture above.  For example, the central edge represents the basis $\{ \pm u, \pm v \}$ since it separates the regions labelled $\pm u$ and $\pm v$, and the vertex to its right represents the superbasis $\{ \pm u, \pm v, \pm(u+v) \}$.

Just as a linear form is determined by its values on a basis, a binary quadratic form is determined by its values on a superbasis.  For forms on $\ZZ^2$, the topograph provides lovely visual evidence of this fact.  Evaluate the quadratic form $f$ on the lax vector labelling each region.  The parallelogram law,
\[
        f(x+y) + f(x-y) = 2 f(x) + 2 f(y),
\]
relates the values on the four regions surrounding any edge of the topograph.  Hence knowing the values surrounding any one vertex allows one, iteratively, to deduce every other value in the topograph.  Imagining the values of $f$ as altitude, Conway classifies integral quadratic forms (as indefinite, positive definite, and so on) by their topographical terrain, describing their lakes, wells, weirs and river valleys.

The following equally beautiful picture is named for the geometer Apollonius of Perga (262-190 BC).

\begin{center}
\includegraphics[width=3.5in]{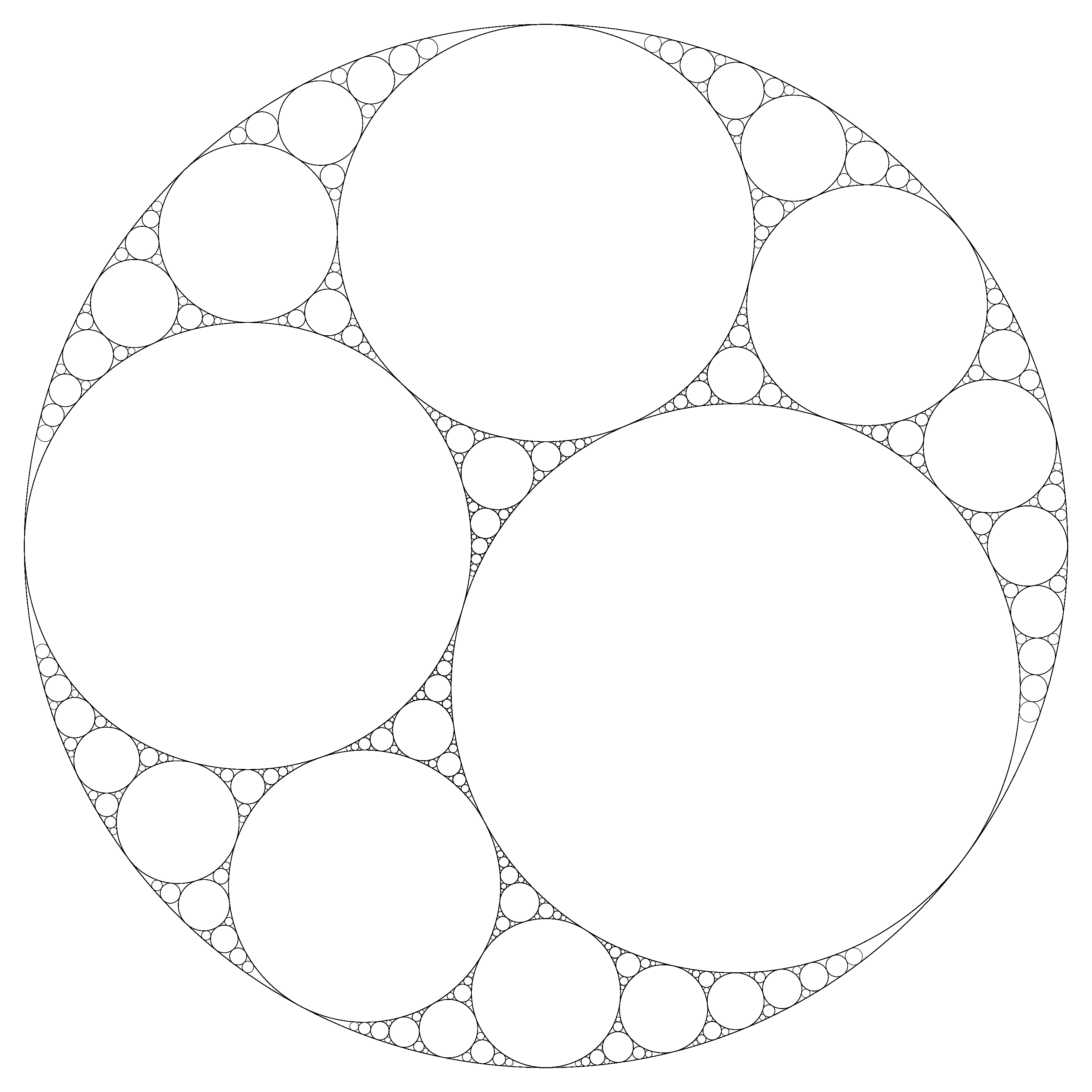}
\end{center}

To form such an \emph{Apollonian circle packing}, one starts with three mutually tangent circles of disjoint interiors (sometimes the ``interior'' of a circle may be defined to be the outside).  Given such a triple, there are exactly two ways to \emph{complete} it to a collection of four mutually tangent circles, again of disjoint interiors, which is called a \emph{Descartes quadruple}.  In the following picture, a triple is drawn in solid lines, and the two possible completions are dotted.  The interior of the large circle is the outside.

\begin{center}
\includegraphics[width=1.5in]{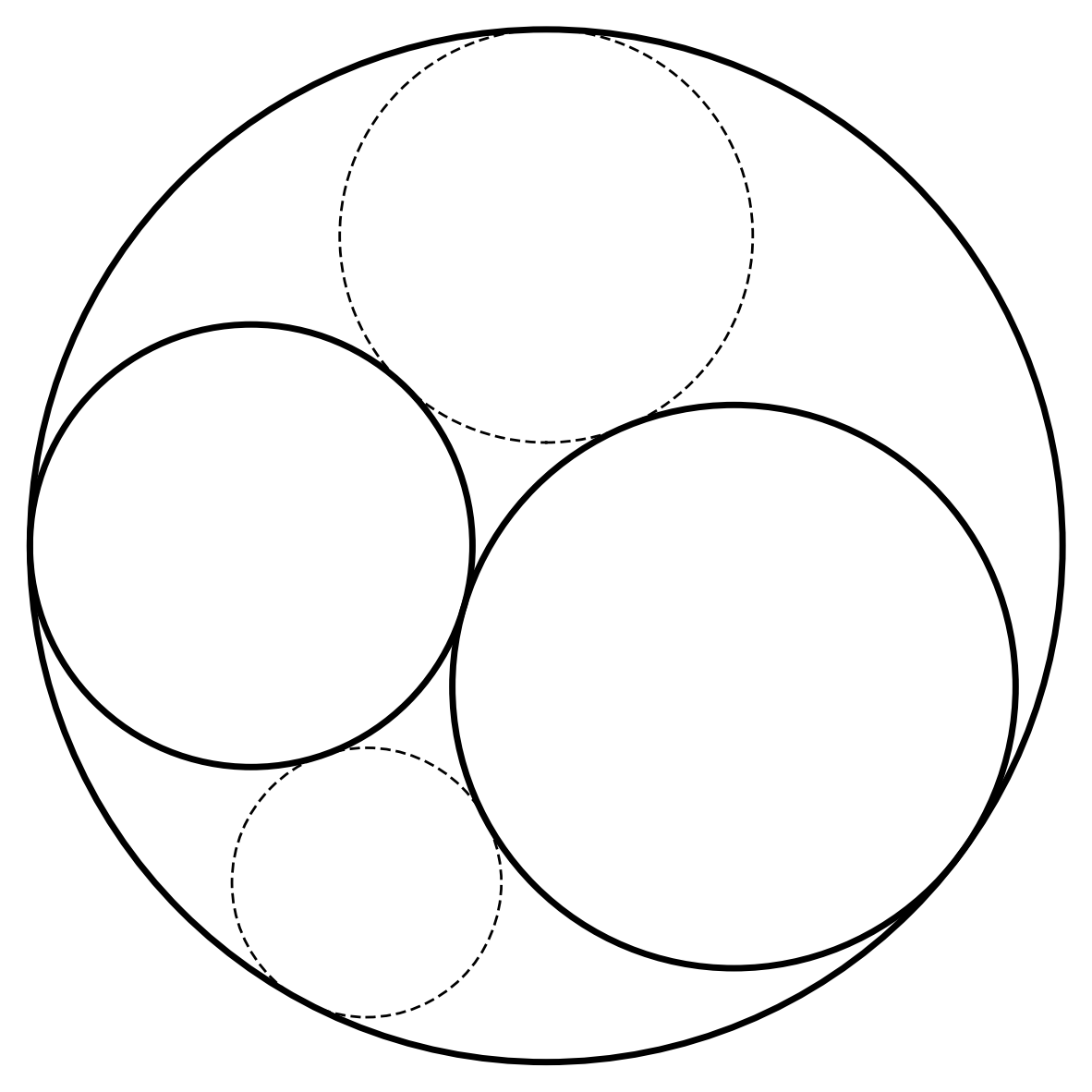}
\end{center}

Start with any three mutually tangent circles, and add in the missing completions, thereby growing the set of circles to five.  Repeat this process ad infinitum, at each stage adding \emph{all} the missing completions of \emph{all} mutually tangent triples in the collection.  

\begin{center}
        \raisebox{-.49\height}{\includegraphics[width=1in]{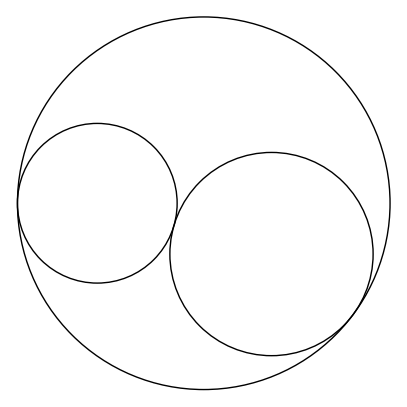}
\includegraphics[width=1in]{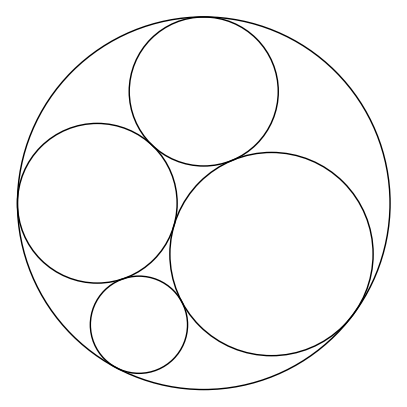}
\includegraphics[width=1in]{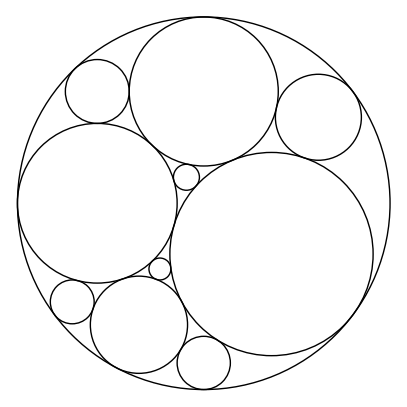}}
$\;\ldots\;$
\raisebox{-.49\height}{\includegraphics[width=1in]{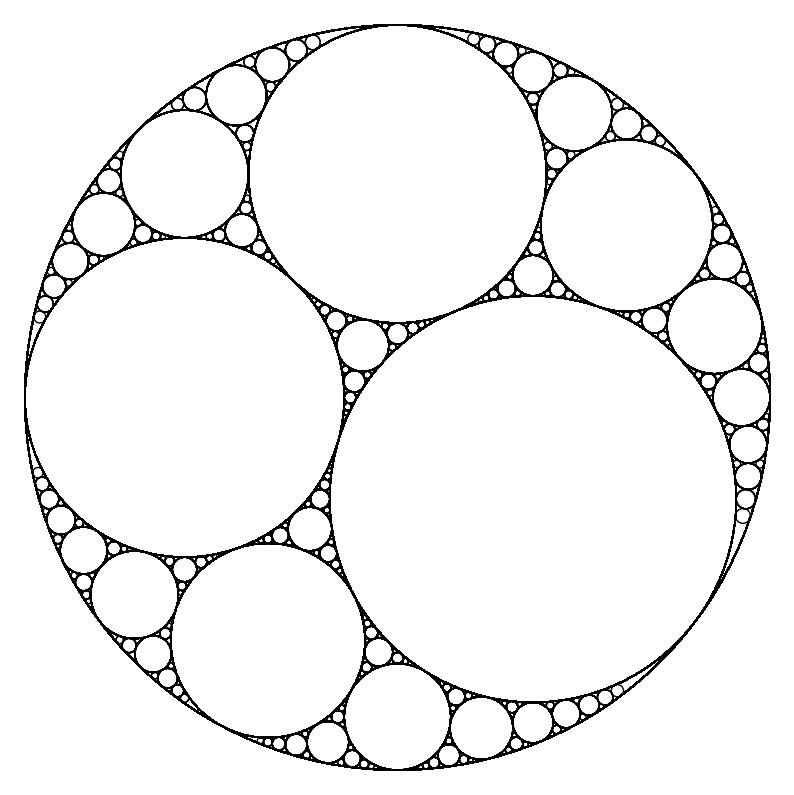}}
\end{center}

The resulting collection of circles is an Apollonian circle packing.  
The interest for the number theorist lies in the \emph{quadratic Descartes relation} between the curvatures (inverse radii) $a,b,c,d$ of four circles in a Descartes quadruple:
\begin{equation}
        \label{eqn:descquad}
        2( a^2 + b^2 + c^2 + d^2) = (a+b+c+d)^2.
\end{equation}
(For circles whose ``interior is outside,'' curvature is negative.)
This remarkable observation has been traced by Pedoe \cite{MR0215169} to Descartes, in a letter to Princess Elisabeth of Bohemia \cite[p.49]{Descartes}.
Considering the trace of \eqref{eqn:descquad} as a quadratic equation in $d$
entails that the two possible curvatures $d$ and $d'$ which complete a triple of curvatures $a,b,c$ to a Descartes quadruple satisfy the \emph{linear Descartes relation}:
\begin{equation}
        d+d' = 2(a+b+c).
        \label{eqn:desclinear}
\end{equation}
In particular, any Apollonian circle packing seeded with a Descartes quadruple of \emph{integer} curvatures $a,b,c,d$ consists entirely of circles of integer curvature!  Such a packing is called \emph{integral}.  Furthermore, it is \emph{primitive} if its curvatures have no common factor.  An example is given in Figure \ref{fig:6eleven}.

\begin{figure}[ht!]
        \centering
        \includegraphics[width=5in]{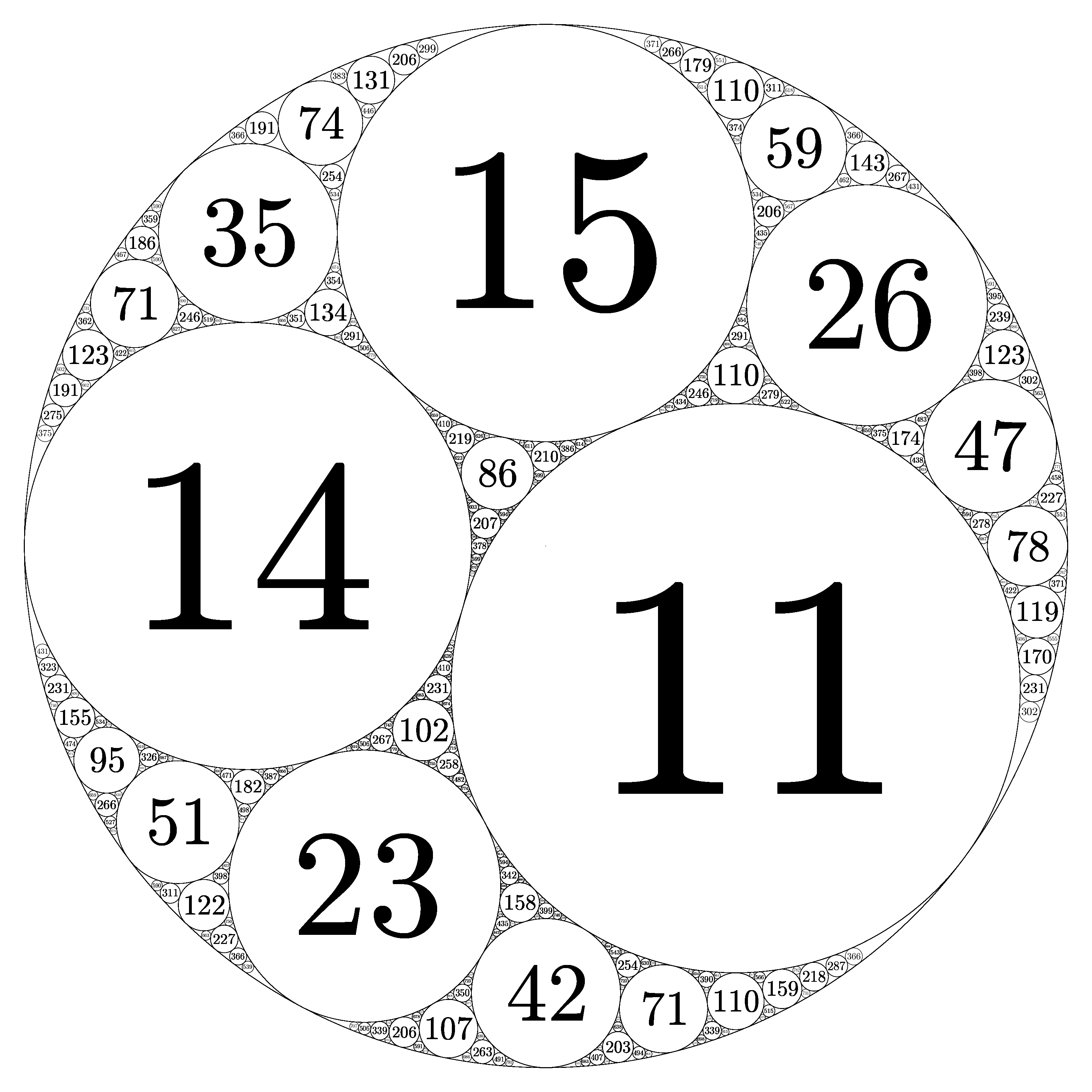}
        \caption{A primitive integral Apollonian circle packing in which each circle is labelled with its curvature.  The outer circle has curvature $-6$.}
        \label{fig:6eleven}
\end{figure}

The pheromones emanated by such an ambrosial arrangement of integers have inevitably attracted number theorists, with some remarkable results.  See \cite{MR2813334, 1205.4416, \ntone, MR2784325, MR2800340} for theorems on the density of the set in the integers, the occurrence of prime curvatures, and much more.

Perhaps the reader has, at this point, already noticed a similarity in the stories of Conway and Apollonius.  In the first, each basis (pair of vectors) can be completed to a superbasis (triple of vectors) in exactly two ways, and the values of $f$ on the two new possibilities sum to twice the sum on the original pair:
\[
        f(u+v) + f(u-v) = 2( f(u) + f(v) ).
\]
In the second, each triple of tangent circles can be completed to a Descartes quadruple in exactly two ways, and the curvatures of the two new possibilities sum to twice the sum on the original triple:
\begin{equation*}
        d + d' = 2( a + b + c).
\end{equation*}

Is this more than a coincidence?  Conway has drawn a picture of $\PP^1(\QQ)$.  In what follows, we will draw a picture of $\PP^1(\QQ(i))$.  It will be called the \emph{Apollonian kingdom}, in anticipation of a connection to the geometer's tale.  The kingdom is a graph whose edges correspond to the elements of $\PP^1(\QQ(i))$, in other words, the primitive lax vectors for $\ZZ[i]^2$ (vectors of coprime entries, considered up to unit multiple).  Imagining it in three dimensions, we will see a grand complex of \emph{chambers} (complete graphs on 4 vertices, i.e.\ tetrahedra) sharing \emph{walls} (complete graphs on 3 vertices, i.e.\ triangles).

Our main theorem is that the whole great pride of Apollonian circle packings lives in this kingdom:  it is a kingdom of palaces, each the opulent home of one primitive integral Apollonian circle packing.




\begin{theorem}[Main Bijection, Introductory Version]
        \label{thm:mbintro}
We have the following correspondences:

\vspace{1em}
\begin{tabular}{l|l|l}
         Apollonian kingdom &  $\PP^1(\QQ(i))$ meaning & Circle packing meaning \\
        \hline
        \hline
        vertex & lax lattice & Gaussian circle \\
        \hline
          edge & lax vector & tangency within a \\
           & & Descartes quadruple  \\
           & & of Gaussian circles  \\
        \hline
         triangle & superbasis & three tangent circles within a\\
         (wall) & & Descartes quadruple\\
         & & of Gaussian circles\\
        \hline
   tetrahedron & ultrabasis & Descartes quadruple \\
   (chamber) & & of Gaussian circles \\
        \hline
                    component &  & Apollonian circle packing \\
                     (palace) &  & of Gaussian circles 
\end{tabular}
\end{theorem}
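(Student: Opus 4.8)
The plan is to exhibit both sides of the table as two avatars of a single group action and then to glue them along one matched configuration. On the geometric side I would attach to every circle of $\PP^1(\CC)$ its Descartes (augmented curvature--center) data --- curvature, curvature times centre, co-curvature --- which presents a circle as a Hermitian $2\times2$ matrix over $\CC$ of determinant $-1$. M\"obius transformations $\PGL_2(\CC)$ act on such matrices by Hermitian congruence, carrying circles to circles, and this is the classical incarnation of the action of $\SOR\cong\PGL_2(\CC)$ on the Descartes quadratic form of \eqref{eqn:descquad}; restricting to $\PGLZ$ (extended by complex conjugation, i.e.\ the integral isometry group $\SO$) gives a group which acts simultaneously on $\ZZ[i]^2$ --- hence on the lax vectors, bases, superbases and ultrabases underlying the Apollonian kingdom --- and on the circles whose Descartes data are Gaussian. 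The Apollonian group, generated by the four Descartes swaps $d\mapsto 2(a+b+c)-d$, lies inside this group. The whole proof then reduces to \emph{(i)} producing an explicit dictionary between these two actions that is equivariant, and \emph{(ii)} checking it carries one chosen ultrabasis to one chosen Descartes quadruple of Gaussian circles.

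For step (ii) I would fix the dictionary at a single chamber: take the standard ultrabasis of $\ZZ[i]^2$ used to build the kingdom and a convenient Descartes quadruple --- for instance the one underlying Figure~\ref{fig:6eleven} after the normalization that makes its circles Gaussian --- and decree that they correspond vertex by vertex. The one genuinely new identity to verify is that this matching is compatible with both kinds of ``completion.'' If $\{u,v,w\}$ is a superbasis, its two completions to ultrabases have fourth lax vectors $x$ and $x'$, and the curvatures $a,b,c,d$ and $a,b,c,d'$ of the associated circle quadruples must satisfy the \emph{linear} Descartes relation \eqref{eqn:desclinear}, $d+d'=2(a+b+c)$. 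This is precisely a polarization / parallelogram-law statement for the Descartes Hermitian form evaluated along the superbasis --- the $\ZZ[i]$-shadow of the relation $f(u+v)+f(u-v)=2(f(u)+f(v))$ highlighted in the introduction --- and so it is a short $2\times2$ matrix computation. The same computation delivers the intermediate rows of the table at the same time: when $\{u,v\}$ is a basis the two circles attached to $u$ and $v$ are tangent, a superbasis gives three mutually tangent circles lying inside a common quadruple, and an ultrabasis gives a genuine Descartes quadruple. Equivariance is then free, since by construction both sides are acted on by the very same matrices.

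Given an equivariant dictionary that is correct at one seed, the global bijection follows by transitivity and a stabilizer count. Transitivity of $\SO$ (equivalently the extended $\PGLZ$) on ultrabases of $\ZZ[i]^2$ --- proved exactly as in Conway's $\ZZ$-case, by chains of elementary basis changes --- shows every chamber of the kingdom maps to \emph{some} Descartes quadruple, and since a Descartes swap belongs to the Apollonian group it acts on the kingdom as the wall-crossing move replacing an ultrabasis by the other one sharing a chosen wall. Hence the connected component of a chamber maps onto exactly the set of Descartes quadruples reachable from the seed by swaps, which is one primitive integral Apollonian packing; conversely two ultrabases lie in one component of the kingdom iff they differ by an element of the Apollonian group, iff the two packings coincide --- this is the component $\leftrightarrow$ palace row. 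That the vertex and edge maps are bijections onto the circles and tangencies of a palace, with no two simplices of a component colliding, is the stabilizer count: the $\PGLZ$-stabilizer of a lax lattice has to match the symmetry-group stabilizer of the corresponding Gaussian circle, and distinct lax vectors of a superbasis have distinct Descartes coordinates.

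The step I expect to be the main obstacle is not the group theory but pinning the dictionary down \emph{on the nose}. One must choose the Descartes coordinates, an interior/orientation convention, and the precise meaning of ``lax'' (vectors modulo the four units of $\ZZ[i]$, lattices modulo the corresponding scalars) so that the assignment lax lattice $\mapsto$ Gaussian circle is at once well defined, equivariant, \emph{and} injective. Two facts must come out exactly right: first, that the \emph{entire} Descartes coordinate vector of every circle in a suitably normalized primitive integral packing --- not merely its curvature --- is Gaussian, which is what gives the Descartes lattice its $\ZZ[i]$-module structure rather than a bare $\ZZ$-structure and justifies the name ``Gaussian circle''; and second, that the relevant stabilizers carry no surplus identifications, the analogue of the statement that a superbasis of $\ZZ^2$ has only its evident symmetries. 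Once the coordinates and conventions are fixed so that these hold, the seed identity of the second paragraph and the transitivity of the third complete the argument, the remainder being transport of structure along the isomorphism $\SO\leftrightarrow\PGLZ$ (suitably extended).
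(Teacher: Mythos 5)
Your overall architecture is the same as the paper's: fix a dictionary at one base chamber / base quadruple, transport it by the spinor-map equivariance of the two $\PGLZ$-actions, and read off the lower rows of the table from the chamber row. The vertex row, which you defer to a ``stabilizer count,'' is handled in the paper by an explicit construction (the oriented $\ZZ$-lattice spanned by the columns of any $M$ carrying $\RR$ to the circle, taken up to multiplication by $i$; Propositions \ref{prop:circlelaxvertexlattice} and \ref{prop:vertexlattices}); your orbit--stabilizer route is viable since the stabilizer of the oriented real line and of the lax lattice $\{\ZZ^2,i\ZZ^2\}$ are both $\PSL_2(\ZZ)$, but as written it is only a plan.

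The genuine gap is in your surjectivity step for chambers. You invoke ``transitivity of $\SO$ on ultrabases of $\ZZ[i]^2$, proved exactly as in Conway's case'' --- but that is transitivity on the kingdom side, and it only yields that every chamber maps to \emph{some} Descartes quadruple. What the bijection additionally needs is that every Gaussian Descartes quadruple is the image of the base quadruple under an element of $\SO$ (equivalently of $\PGLZ$), and this is not formal: if $\mathbf{A}$ and $\mathbf{B}$ are the augmented curvature-centre matrices of two Gaussian quadruples, the unique real Lorentz transformation relating them is $\mathbf{A}\mathbf{B}^{-1}$, and since $\det\mathbf{B}=\pm 8$ there is no a priori reason for this matrix to be integral. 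The paper's Proposition \ref{prop:so} proves integrality by a dedicated congruence argument (Lemma \ref{lemma:mod2}: curvature-centres of tangent Gaussian circles are congruent and nonzero mod $1+i$, combined with the parity constraints of Proposition \ref{prop:curvature}), which lets one factor $\mathbf{A}$ and $\mathbf{B}$ as a fixed diagonal matrix times a unimodular integer matrix. You do flag that ``the entire Descartes coordinate vector \dots is Gaussian'' must come out right, but Gaussian-integrality of the coordinates alone is not enough; without the finer mod-$2$ and mod-$(1+i)$ structure the transporting Lorentz matrix need not lie in $\SO$, and the claim that every Gaussian Descartes quadruple (hence every Gaussian packing) is realized by a chamber does not close.
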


The first and last terms in the second column will be explained in the next section.  

We will consider packings in their native habitat:  the extended complex plane, $\CC_\infty$.  

\begin{definition}
        A \emph{Gaussian circle} is the image of $\RR$ under a M\"obius transformation having coefficients in $\ZZ[i]$.
\end{definition}

The complete collection of Gaussian circles is dense in $\CC_\infty$, but if we draw only those circles with bounded curvature, as in Figure \ref{fig:superpacking}, we can see the ornate structure in their arrangement.  They collect themselves into nested and intertwined Apollonian circle packings!  In fact, all primitive integral Apollonian circle packings can be realized using Gaussian circles, after a contraction by a factor of two; see Section \ref{sec:after}.

\begin{figure}[ht!]
        \centering
        \includegraphics[width=4in]{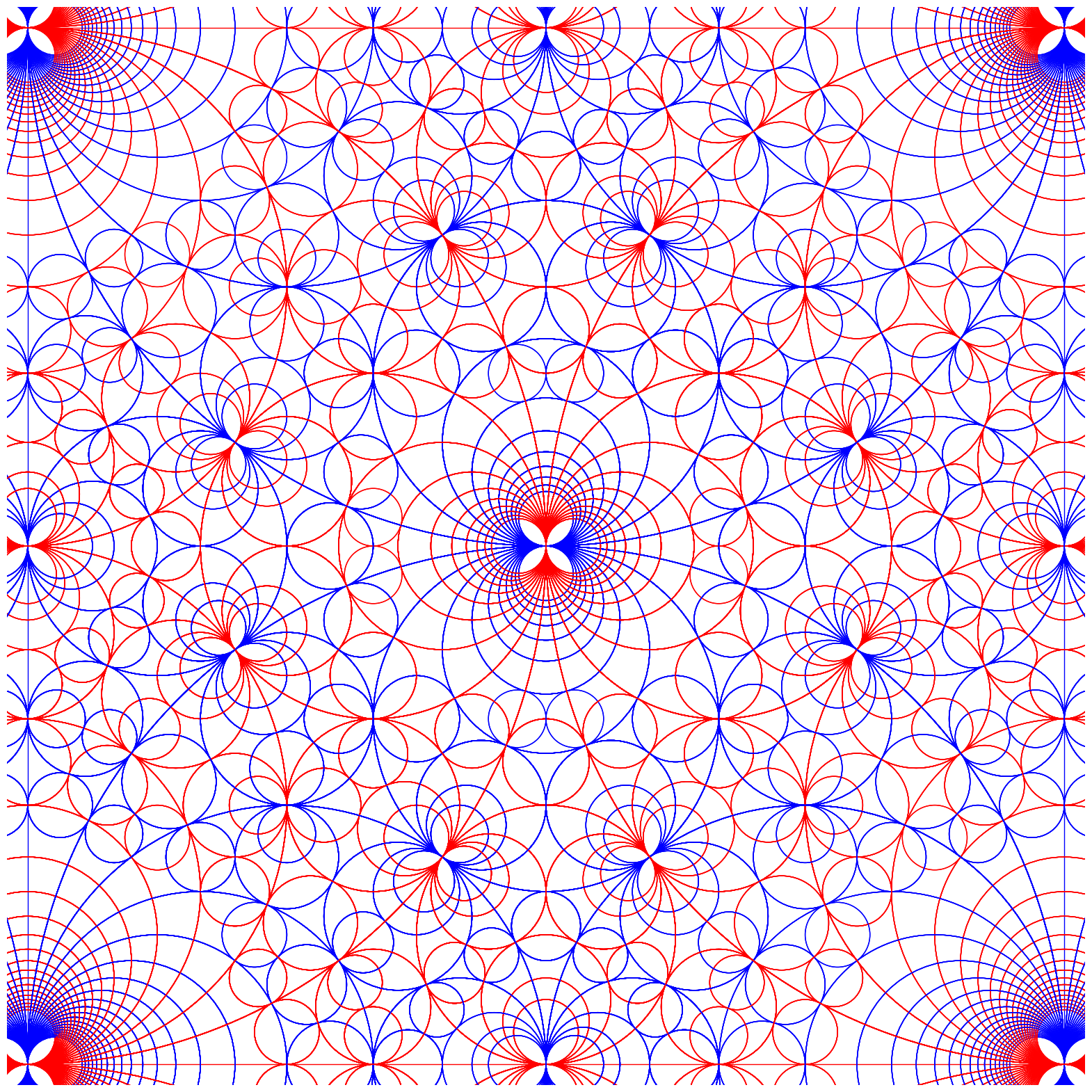}
        \caption{All Gaussian circles of curvature less than 40 within a square with side-length 1, centred on $\frac{1+i}{2}$.  See Section \ref{sec:after}.}
        \label{fig:superpacking}
\end{figure}

And their curvatures?  Hermitian forms take the place of the quadratic forms in Conway's story.  The proof of the Main Bijection relies on a second result:

\begin{theorem}
        \label{thm:herm}
        Let $H$ represent the imaginary part of a Hermitian form.  Then, $H$ takes a well-defined value on vertices of the Apollonian kingdom in such a way that the values on two adjacent chambers satisfy a `linear Descartes relation': i.e.\ the labels on their \emph{common vertices}, doubled, add up to the sum of the labels of their \emph{non-common vertices}:
\begin{center}
\includegraphics[width=3.5in]{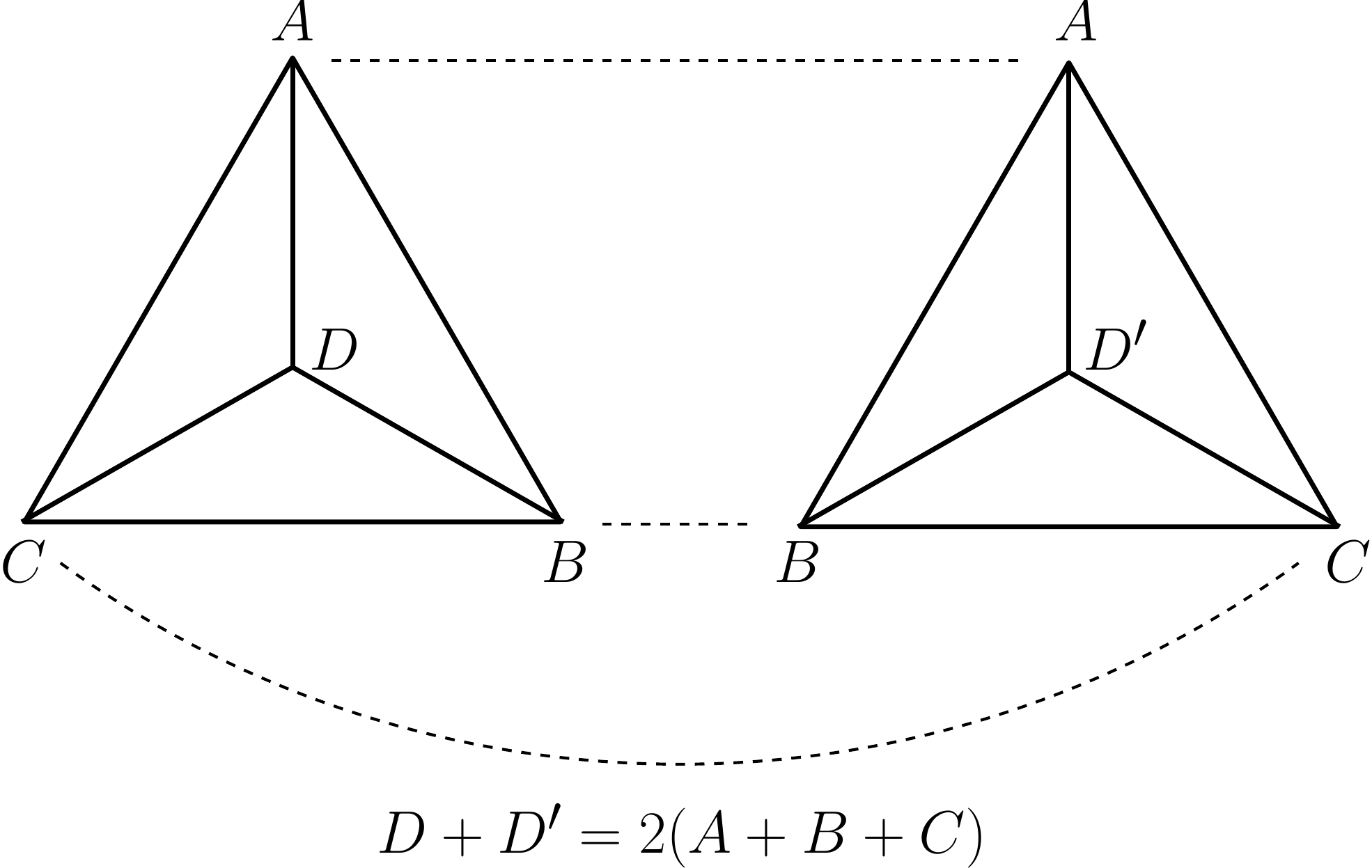}
\end{center}
\end{theorem}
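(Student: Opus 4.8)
The plan is to recast the claim as the Descartes circle theorem for Gaussian circles, written at the level of Hermitian matrices, together with the observation that $H$ is an $\RR$-linear functional of those matrices; unlike Conway's setting, where the analogous relation comes from the parallelogram law, here the engine is the Descartes relation itself. To begin, one must give meaning to $H(L)$ on a vertex $L$ and check it is well-defined. Let $M$ be the Hermitian matrix and $\mathcal{B}$ the sesquilinear form of which $H=\Im\mathcal{B}$ is the imaginary part --- an alternating $\RR$-bilinear form on $\CC^{2}\cong\RR^{4}$. A vertex is a lax lattice $L$, i.e.\ (on choosing representatives) a rank-two oriented $\ZZ$-lattice in $\RR^{4}$, determined up to a Gaussian unit, and one sets $H(L):=H(v_{1},v_{2})$ for an oriented $\ZZ$-basis $v_{1},v_{2}$. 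This is well-defined because an oriented integral change of basis has determinant $1$ and because replacing $L$ by $\zeta L$ with $\zeta\in\ZZ[i]^{\times}$ multiplies $\mathcal{B}(v_{1},v_{2})$ by $|\zeta|^{2}=1$; the only inputs are that Gaussian units have absolute value $1$ and that oriented integral base changes have determinant $1$.

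The crucial step is to identify $H(L)$ with a fixed linear functional of the Hermitian matrix of the Gaussian circle $C$ attached to $L$. Let $M_{C}$ be the normalised, oriented Hermitian matrix of $C$ singled out by the lax-lattice structure, and let $\langle\,\cdot\,,\,\cdot\,\rangle$ denote the symmetric bilinear pairing on the four-dimensional real space of Hermitian $2\times2$ matrices that polarises $-\det$ (the Lorentzian ``inversive product''). I claim $H(L)=\langle M,M_{C}\rangle$. For the standard circle $\RR$, whose lax lattice is $\ZZ^{2}$, one computes directly $H(\ZZ^{2})=\Im(M)_{12}=\langle M,M_{\RR}\rangle$; since both sides are $\RR$-linear in $M$ this need only be checked on a basis of Hermitian matrices. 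The identity then propagates to every Gaussian circle, because Gaussian circles form a single orbit under Möbius transformations with coefficients in $\ZZ[i]$ and, under $g$ in this group, both $L\mapsto H(L)$ and $M\mapsto\langle M,M_{C}\rangle$ transform with the same conformal weight --- a weight that becomes trivial once $M_{C}$ is taken primitive.

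With this in hand the theorem follows. Two adjacent chambers are two Descartes quadruples of Gaussian circles sharing three circles $C_{1},C_{2},C_{3}$; let $M_{0},\dots,M_{3}$ be the normalised matrices of the first quadruple and $M_{0}'$ that of the fourth circle of the second. The Descartes circle theorem, in Lorentzian form, shows that the pairwise and self inversive products of the four $M_{i}$ are fixed by the tangency pattern, so the $M_{i}$ are a rigid frame in $\RR^{3,1}$ and the exchange $C_{0}\leftrightarrow C_{0}'$ is the reflection $M\mapsto 2(M_{1}+M_{2}+M_{3})-M$; equivalently, the identity $M_{0}+M_{0}'=2(M_{1}+M_{2}+M_{3})$ is covariant under $g\in\GL_{2}(\CC)$ acting by $M\mapsto g^{*}Mg$, so it suffices to verify it for one standard quadruple (two parallel lines and two tangent unit circles), a short computation. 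Applying $\langle M,\,\cdot\,\rangle$ and using the previous paragraph gives $H(L_{0})+H(L_{0}')=\langle M,M_{0}+M_{0}'\rangle=2\langle M,M_{1}+M_{2}+M_{3}\rangle=2\bigl(H(L_{1})+H(L_{2})+H(L_{3})\bigr)$, which is exactly the linear Descartes relation. (One can also bypass the matrix identity: the group above acts on the kingdom transitively on adjacent chamber pairs, and both the kingdom and $(M,L)\mapsto H(L)$ are equivariant, so --- the relation being linear in the entries of $M$ --- it need only be checked for a single standard pair.)

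I expect the main obstacle to lie in the normalisation bookkeeping of the second paragraph: identifying precisely which Hermitian matrix $M_{C}$ the lax lattice of a Gaussian circle determines, and verifying that these matrices are globally coherent across a chamber, so that one genuinely applies the same functional $\langle M,\,\cdot\,\rangle$ to all of $M_{0},M_{1},M_{2},M_{3},M_{0}'$. Once the dictionary vertex $\leftrightarrow$ Gaussian circle from the previous section is made quantitative in this way, the remaining ingredients --- the classical Descartes circle theorem and a one-line linearity argument --- are routine.
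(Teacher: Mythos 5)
Your proposal is correct in substance, but it takes a genuinely different --- and logically heavier --- route than the paper. The paper proves this theorem entirely at the level of lax vectors and Hermitian forms, \emph{before} any correspondence with circles is available: well-definedness of $H$ on a vertex follows from the determinant property $H(au+bv,cu+dv)=(ad-bc)H(u,v)$ exactly as in your first paragraph, and the linear Descartes relation is then the identity
\[
H(w-iv,iu-v)+H(w+iv,iu+v)=2\bigl(H(u,iv)+H(v,iw)+H(w,iu)\bigr)
\]
for $u+v+w=0$, which the paper extracts from a direct algebraic computation showing a certain combination of $h$-values is real (its Lemma on the two standard ultrabases). Your main line instead identifies $H(L)$ with a fixed Lorentzian functional $\langle M,M_{C}\rangle$ of the circle's Hermitian matrix and invokes the vector form $M_{0}+M_{0}'=2(M_{1}+M_{2}+M_{3})$ of the Descartes relation. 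That argument is sound, but it presupposes that adjacent chambers correspond to Descartes quadruples of Gaussian circles sharing a triple --- which is the Main Bijection (Second Part), proved three sections later using the Pedoe map, Kocik's criterion and the spinor map. This is not formally circular (the paper's proof of the Main Bijection uses only the determinant property and the curvature formulas, not the relation you are proving), but it inverts the paper's logical order and makes the hardest theorem in the paper a prerequisite for what is meant to be a preliminary lemma. Your parenthetical alternative --- transitivity of $\PGL_2(\ZZ[i])$ on adjacent chamber pairs, equivariance of $H$ under pullback of the form, and a single verification on the standard pair --- is the self-contained version, and once unwound it coincides with the paper's direct calculation, since a general adjacent pair \emph{is} the standard pair for some $u,v,w$ with $u+v+w=0$. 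What your route buys, if one is willing to assume the bijection, is conceptual clarity: it exhibits the Hermitian-form relation as the image of the classical Descartes reflection under a linear functional, rather than as an unexplained algebraic identity. What the paper's route buys is independence from the circle picture, which is what allows the theorem to be used as scaffolding for the bijection rather than a corollary of it.
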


In particular, for a special choice of $H$, these values are exactly the curvatures of the corresponding circles.  Also along the path to the Main Bijection, we'll find that the answer to our question goes beyond an analogy:  in fact, Conway's topograph is repeated infinitely often inside the Apollonian kingdom, once for each circle.  In a subsequent paper, this relationship allows us to rediscover the curvatures in a packing as values of quadratic forms (this connection was first observed in \cite{\ntone, \Northshield, SarnakLetter} and used to great effect in \cite{\BourgainFuchs,\bk}; for an exposition see \cite{\FuchsBulletin}).

\begin{acknowledgement}
        The author would like to thank Lionel Levine for drawing her attention to Apollonian circle packings, and would especially like to thank David Wilson for sharing data he had collected associating circles and lattices in $\ZZ^2$ in the context of abelian sandpiles.  It was an examination of his data that led to this study.  For more on this connection, see \cite{Lionel,Lionel2}.  
        The author owes a debt of gratitude to the invaluable and detailed suggestions provided by Jeffrey Lagarias, Andrew Granville, Jonathan Wise and an anonymous referee on an earlier draft.  The images in this paper were produced using IPE \cite{IPE} and Sage Mathematics Software \cite{\Sage}.
\end{acknowledgement}


\section{A kingdom of palaces: the statement of the Main Bijection}

We will approach the main bijection by defining two graphs:  the first is an \emph{Apollonian palace} for each Apollonian circle packing, and the second is the \emph{Apollonian kingdom} defined in the language of lax vectors of $\ZZ[i]^2$ (without reference to circles).  Then we will show that the Apollonian kingdom has infinitely many components, each the Apollonian palace for one of the infinitely many Apollonian circle packings in $\CC_\infty$.

\subsection{Palaces of Conway and Apollonius}
\label{newsec:apppal}

If one considers the regions of Conway and Fung's topograph -- the elements of $\PP^1(\QQ)$ -- as that which is essential to their story, one may redraw their picture in a new way:   define a vertex for each element of $\PP^1(\QQ)$, connecting them by an edge whenever, taken as a pair of primitive vectors of $\ZZ^2$, they form a basis.  This new picture can be drawn as an overlay on the original topograph as in Figure \ref{fig:conwaytriangle}.  Edges represent bases, and triangles represent superbases.  We will call this new picture \emph{Conway's palace} to distinguish it from the topograph.

\begin{definition}
\emph{Conway's palace} is the graph having one vertex for each lax vector of $\ZZ^2$, and having an edge joining each pair of vertices whose lax vectors form a basis.
\end{definition}

\begin{figure}
        \centering
\includegraphics[height=2.8in]{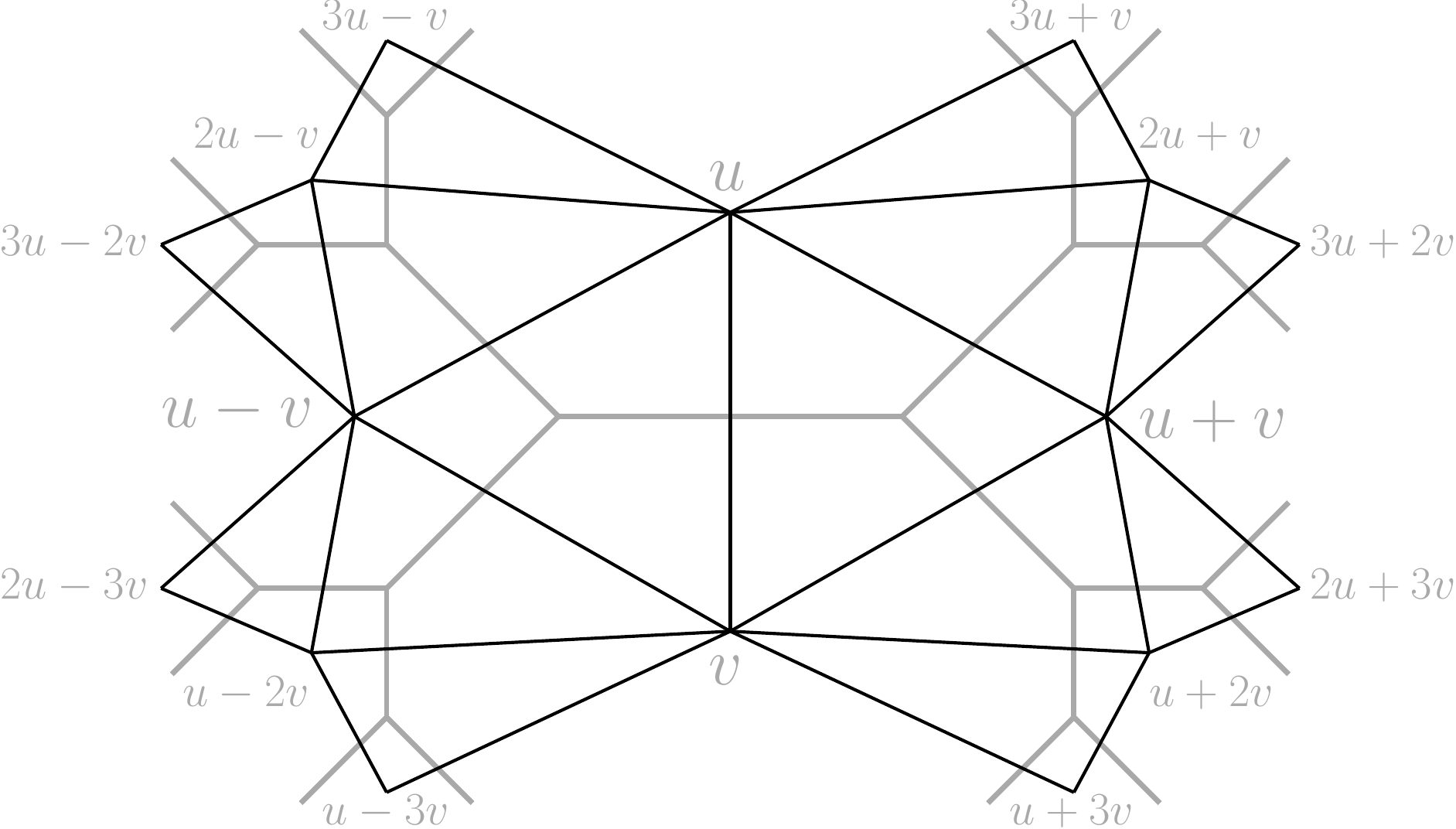}
\caption{A portion of Conway's topograph (grey), overlaid by the corresponding portion of Conway's palace (black).  
}
        \label{fig:conwaytriangle}
\end{figure}

Under our motivating analogy, the elements of $\PP^1(\QQ)$ in Conway's story become the circles in Apollonius'.  If we mark a vertex at the center of every circle in an Apollonian circle packing, and draw a line to connect every tangent pair, we obtain a picture not unlike Figure \ref{fig:conwaytriangle}, this one shown in Figure \ref{fig:kingdom}.  Triangles represent triples of tangent circles, and the \emph{tetrahedra} (i.e.\ complete subgraphs on 4 vertices) represent Descartes quadruples.  We will call this an \emph{Apollonian palace}.
\begin{figure}
        \centering
\includegraphics[height=3.2in]{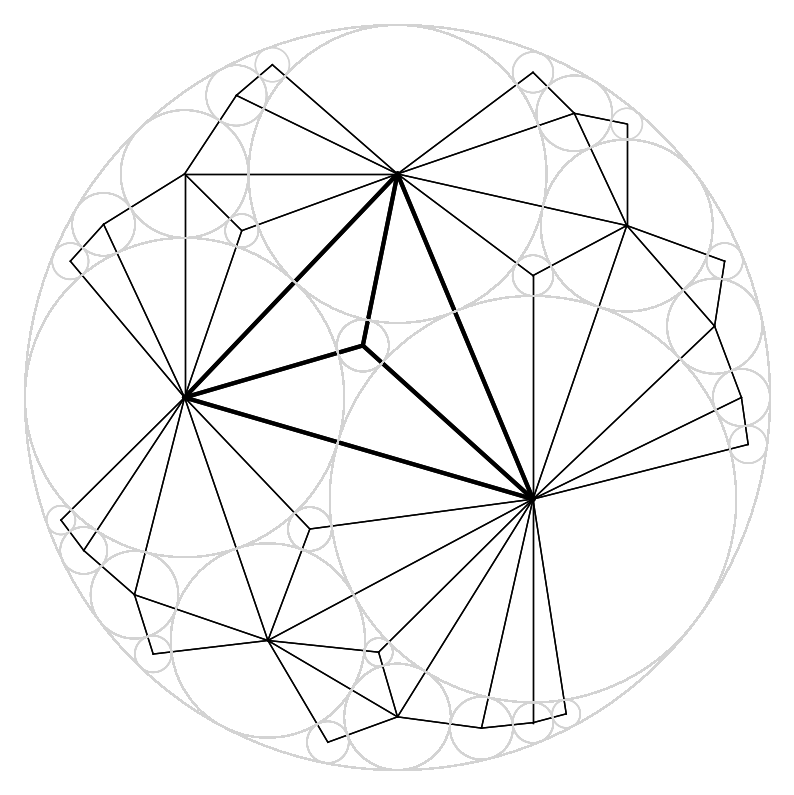}
        \caption{A portion of an Apollonian circle packing (grey) overlaid by the corresponding portion of its Apollonian palace (black).  Only circles whose curvature lies below a certain bound are shown; the vertex for the outer circle is not included.  An example tetrahedron is shown in thicker lines.}
        \label{fig:kingdom}
\end{figure}

\begin{definition}
The \emph{Apollonian palace} of an Apollonian circle packing is the graph having one vertex for each circle of the packing, and having an edge joining each pair of vertices whose circles are tangent.
\end{definition}



\subsection{Apollonian palaces in the complex plane}

To give this palace a bit more structure, we look at packings in the extended complex plane $\CC_\infty = \CC \cup \{\infty \}$.
This is a model of $\PP^1(\CC)$, wherein we associate to each one-dimensional subspace of $\CC^2$ generated by a vector $(z_1, z_2)$ the point $z_1/z_2$ in the extended complex plane.  We can think of $\PP^1(\QQ(i))$ as a subset of $\PP^1(\CC)$; it consists of those subspaces with slopes in $\QQ(i)$.  In the plane, these are the points $\QQ(i)_\infty = \QQ(i) \cup \{\infty\}$.

For a commutative ring $R$ with field of fractions $K$, the matrix group 
\[
        \PGL_2(R) = \left. \left\{ \begin{pmatrix}a & b \\ c & d \end{pmatrix} : ad-bc \in R^*; a,b,c,d \in R \right\} \right/ R^* 
\]
acts as automorphisms on $\PP^1(K)$ by matrix multiplication.  The corresponding action of $\PGL_2(\CC)$ on $\CC_\infty$ is via \emph{M\"obius transformations}, i.e.\
\[
        \begin{pmatrix} a & b \\ c & d \end{pmatrix} \cdot z \mapsto \frac{az+b}{cz+d}.
\]
The group $\PGL_2(\ZZ[i])$ acts the same way on $\QQ(i)_\infty$.  Because two matrices represent the same element of $\PGL_2(\ZZ[i])$ if and only if they represent the same element of $\PGL_2(\CC)$, we can consider $\PGL_2(\ZZ[i])$ a subgroup of $\PGL_2(\CC)$.

We define circles in $\CC_\infty$ to include not only the usual suspects, i.e.\ circles in the Euclidean metric on $\CC$, but also straight lines (these are `circles through $\infty$').  Then the M\"obius transformations take circles to circles.  In fact, three points of $\CC_\infty$ uniquely determine a circle.  Since the collection of M\"obius transformations is freely parametrized by the images of $0$, $1$ and $\infty$, any circle can be mapped to any other by choosing a transformation which takes three points on the former to three on the latter.

Recall that four oriented circles are in Descartes configuration (i.e.\ form a Descartes quadruple) if and only if they are mutually tangent of disjoint interiors.  To discuss interiors, we define \emph{orientation}, which is an additional binary datum associated to a circle:  we say the orientation is \emph{clockwise} or \emph{counter-clockwise}, indicating a direction of travel around the circle.  
By convention, the region to the right as one travels around the circle is its interior.  We also use the convention that \emph{clockwise} orientation of $\RR$ means travel to the right.  

M\"obius transformations can be considered to act on oriented circles in the following way.  Choose three points of a circle ordered according to the direction of travel; the orientation of the image circle is the direction of travel necessary to visit the three respective image points in the same order.  The continuity of M\"obius transformations on $\CC_\infty$ guarantees that this is well-defined.  

Figure \ref{fig:strip} shows the images of the real line $\RR$ under the following group of M\"obius transformations of $\CC_\infty$:
\[
        \left\langle 
        \begin{pmatrix} 1 & 0 \\ 1 & 1 \end{pmatrix},
                     \begin{pmatrix} 0 & i \\ i & 1 \end{pmatrix}
        \right\rangle
\]

One obtains an Apollonian circle packing seeded by a Descartes quadruple of curvatures $0$, $0$ (the two straight lines), $2$ and $2$ (this is demonstrated in Section \ref{sec:after}).  Figure \ref{fig:1223nohalf} shows another Apollonian circle packing in $\CC_\infty$: this one is the image of Figure \ref{fig:strip} under the transformation 
$\begin{pmatrix}i & 0 \\ 1 & i \end{pmatrix}$.  
Since M\"obius transformations preserve circles and tangencies, we can generate many Apollonian packings this way.

\newsavebox{\strip}
\savebox{\strip}{
$\left\langle \begin{pmatrix} 1 & 0 \\ 1 & 1 \end{pmatrix}, \begin{pmatrix} 0 & i \\ i & 1 \end{pmatrix}\right\rangle$}

\begin{figure}[ht!]
        \centering
        \includegraphics[width=4.7in]{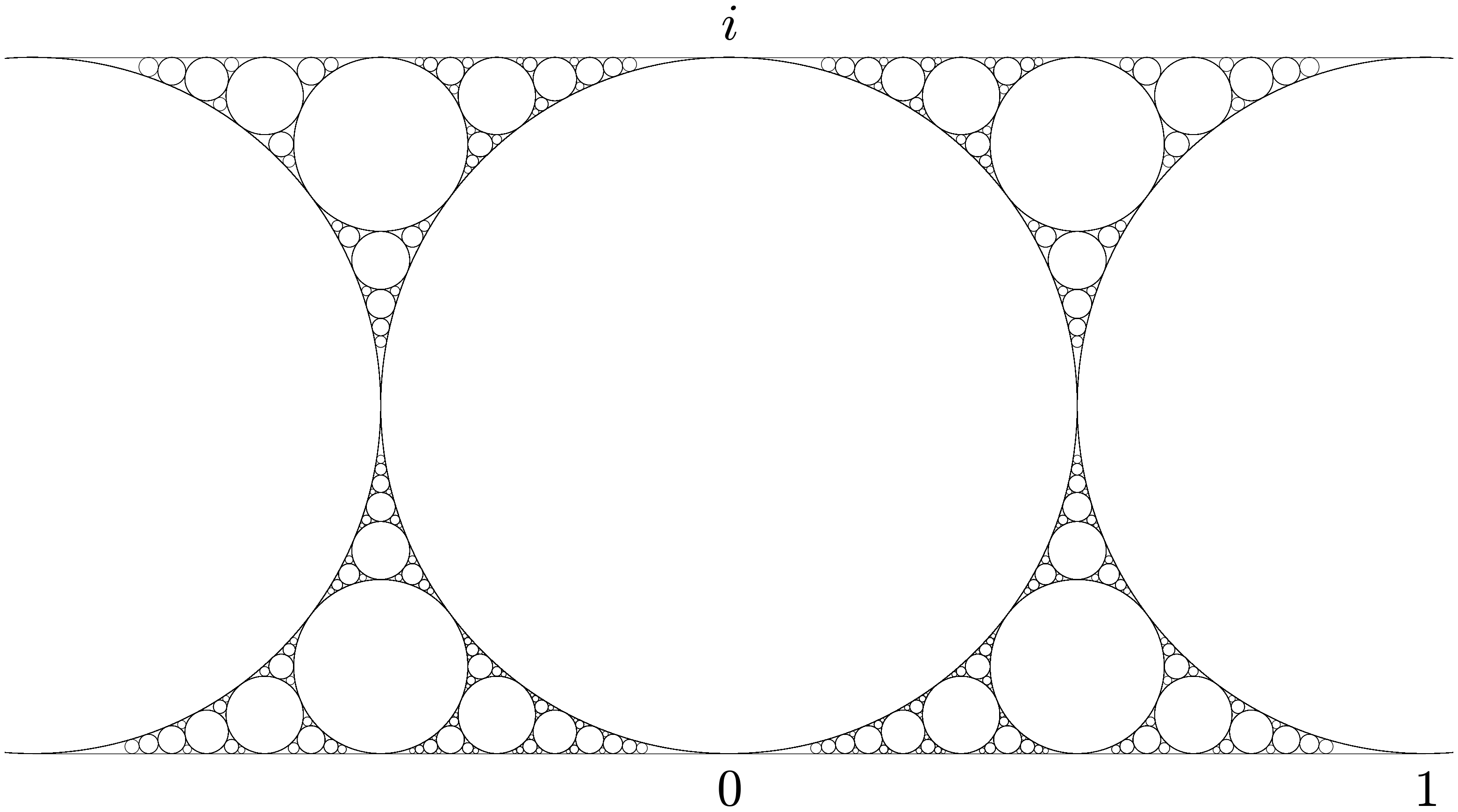}
        \caption{The Apollonian strip packing, obtained as all images of $\RR$ under 
        \usebox{\strip}.}
        \label{fig:strip}
\end{figure}

\newsavebox{\nohalf}
\savebox{\nohalf}{
 $\begin{pmatrix} i & 0 \\ 1 & i \end{pmatrix} 
\left\langle \begin{pmatrix} 1 & 0 \\ 1 & 1 \end{pmatrix}, \begin{pmatrix} 0 & i \\ i & 1 \end{pmatrix}\right\rangle$}

\begin{figure}[ht!]
        \centering
        \includegraphics[width=3.7in]{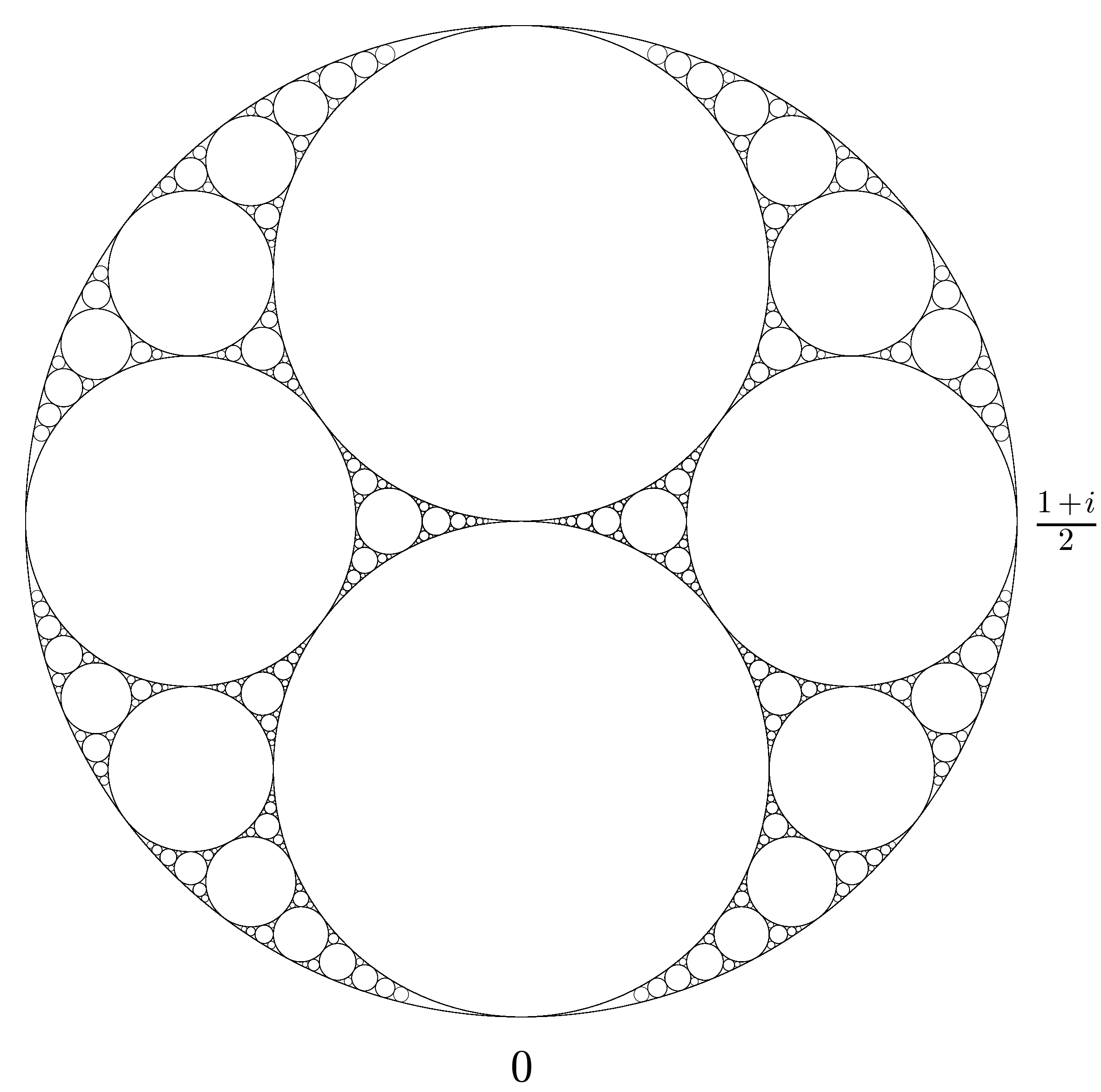}
        \caption{An Apollonian packing, obtained as all images of $\RR$ under the coset \usebox{\nohalf}.}
        \label{fig:1223nohalf}
\end{figure}

An Apollonian circle packing or a Descartes quadruple in $\CC_\infty$ is called \emph{integral} if all of its curvatures are integral, and \emph{strongly integral} if, in addition, the centre of each circle, multiplied by the curvature, is a Gaussian integer.  It is called \emph{primitive} if it is integral and the collection of curvatures has no non-trivial common factor.  As we shall see later, packings obtained in the manner of Figures \ref{fig:strip} and \ref{fig:1223nohalf} are, \emph{once they are dilated by a factor of two in each dimension}, primitive and strongly integral.

This dilation by two will be a persistent housefly in our story -- a visitor who never does more than buzz about irritatingly, but who cannot be induced to leave.  To avoid confusion, we will use the terms \emph{half-curvature} for $1/2$ of the curvature, and \emph{half-primitive} for a packing (or quadruple) which, when dilated by two, is primitive.  The condition of being strongly integral is preserved by dilation or contraction. 

In these example packings, all the circles in question are Gaussian circles.


\subsection{An Apollonian kingdom}
\label{sec:appkingdom}


Now, for a moment we'll forget circles entirely, and turn to generalizing the terminology of Conway and Fung.

\begin{definition}
        Let $R$ be an integral domain with unit group $U$ and field of fractions $F$.  A \emph{lax vector} of $R^2$ is a primitive vector (not an $R$-multiple of any other), considered up to multiplication by elements of $U$.  A \emph{basis} is a pair of lax vectors which, once representatives are chosen, form a basis for $R^2$.  A \emph{superbasis} is a triple of lax vectors any two of which form a basis. 
\end{definition}

The collection of lax vectors is in bijection with $\mathbb{P}^{1}(F)$.  A geometer may prefer the following equivalent definition of bases and superbases:  if we consider $\PP^1(F)$ as $\PP^1(R)$ over $\operatorname{Spec} R$, then bases and superbases are, respectively, pairs and triples of distinct $R$-points of $\PP^1(R)$.

Consider the lax vectors of $\ZZ[i]^2$, which are in bijection with $\PP^1(\QQ(i))$.  A superbasis can be visualized as a triangle (a complete graph on three vertices) whose edges are labelled with the superbasis elements $u$, $v$ and $w$:
\begin{center}
\includegraphics[height=1.8in]{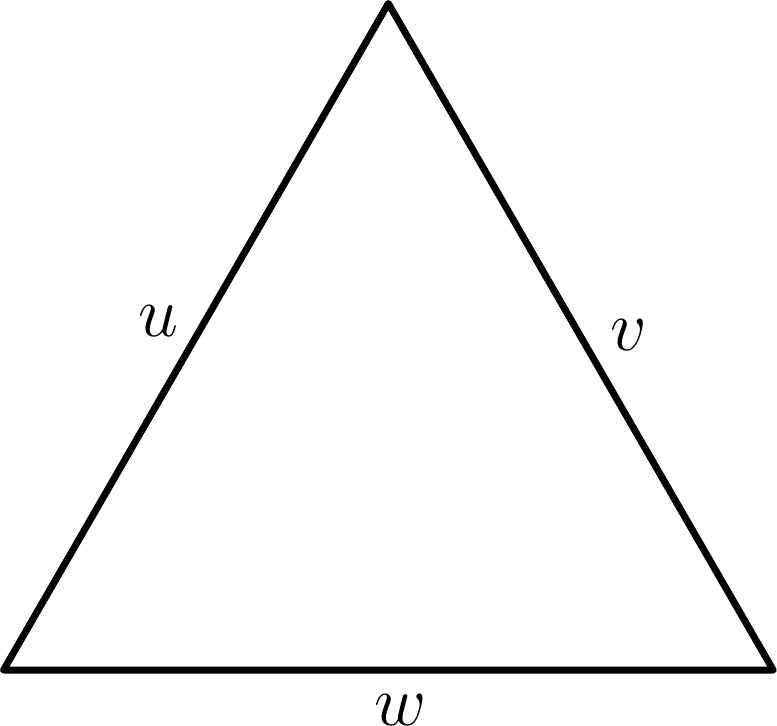}
\end{center}
We will call this a \emph{wall}.

\begin{definition}
An \emph{ultrabasis} is a collection of six lax vectors $\{u,v,w,x,y,z\}$ such that the four subsets $\{u,v,w\}$, $\{u,y,z\}$, $\{v,x,z\}$ and $\{w,x,y\}$ are superbases.
\end{definition}

Whenever the union of four superbases forms an ultrabasis, we can imagine the sides of the corresponding walls identified according to their labels, forming a tetrahedron (a complete graph on four vertices) which consists of the four triangles:
\begin{center}
\includegraphics[height=2in]{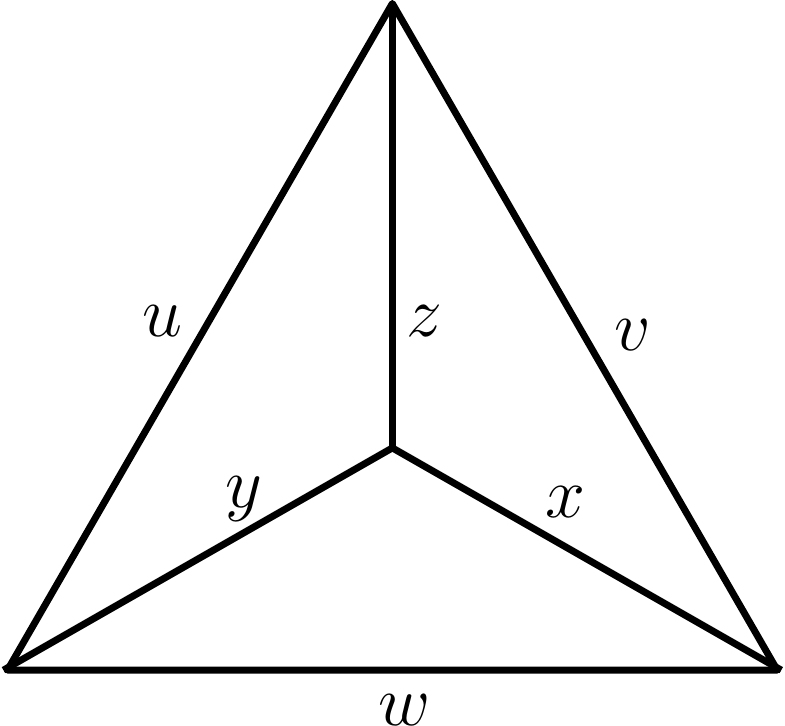}
\end{center}
We will call this a \emph{chamber} (consisting of four walls).  Two ultrabases are considered the same if they form the same labelled graph, up to isomorphism (in other words, an ultrabasis does not have an ordering, or a preferred face, etc.).  As it turns out, if a set of six lax vectors is an ultrabasis, it is an ultrabasis in exactly two ways, having different faces (obtained by taking the dual of the tetrahedron in the sense of Platonic solids, swapping faces for vertices and vice versa).  We consider these distinct. 

If the lax vectors of a superbasis have representatives $u$, $v$ and $w$, then $\epsilon_1 u+\epsilon_2 v+\epsilon_3 w=0$, for some appropriate choice of units $\epsilon_i$.  
Suppose these representatives are chosen with the convention that $u+v+w=0$.
Then it is straightforward to check that there are exactly two ways to form an ultrabasis containing this superbasis, shown in Figure \ref{fig:standard}.
For example, the leftmost triangle is a superbasis satisfying
\[
 u - i( u + i v) + (w + iu) = u +v+w =0.
\]

\begin{figure}[ht!]
        \centering
        \raisebox{-.5\height}{\includegraphics[height=1.9in]{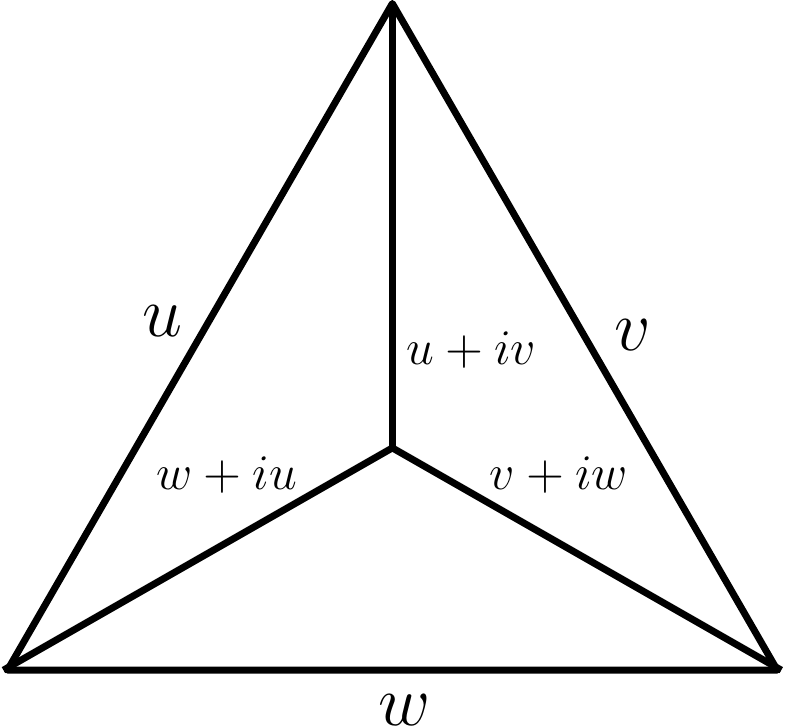}}
\quad \raisebox{-.5\height}{\includegraphics[height=1.9in]{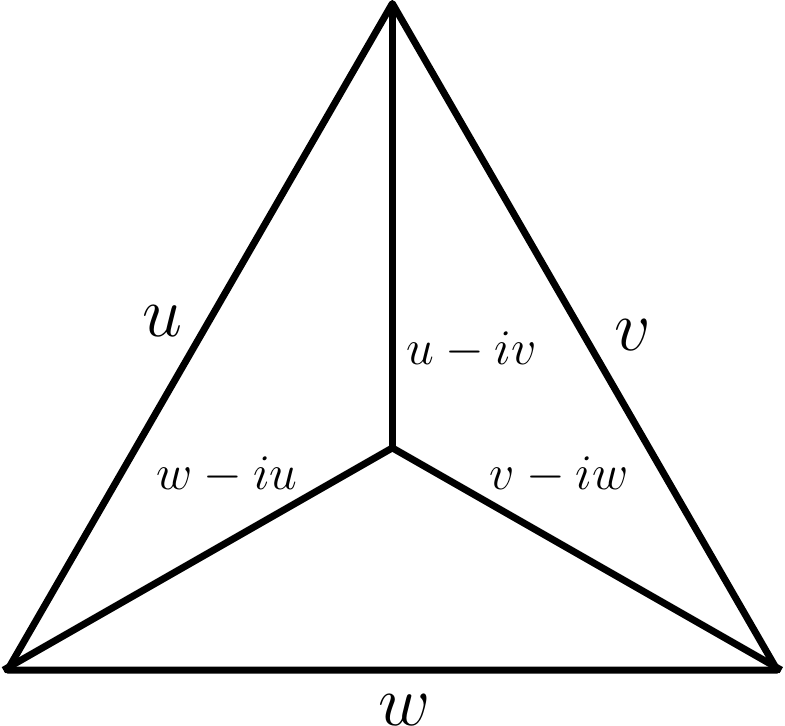}}
        \caption{Left and right standard ultrabases.}
        \label{fig:standard}
\end{figure}

Any ultrabasis is of the form of one of the two \emph{standard ultrabases} just exhibited, for some $u,v,w$; and any pair of chambers sharing a wall is of the form of this pair\footnote{Such representations are not unique.}.  (This is simply a consequence of the fact that any superbasis has representatives satisfying $u+v+w=0$.)  

\begin{definition}
The \emph{Apollonian kingdom} is the graph with labelled edges formed from the collection of all ultrabasis chambers by identifying them along common superbasis walls (matching the lax vector labels on their sides).  
\end{definition}

Each wall appears exactly once, connecting exactly two chambers.


\subsection{A kingdom of palaces}

Any two tangent Gaussian circles have a point of tangency in $\QQ(i)_\infty$, since this point is a double root of a quadratic equation with $\QQ(i)$-coefficients\footnote{It is the fixed point of the map $M_1FM_1^{-1}M_2FM_2^{-1}$, where $F$ is complex conjugation and $M_j$ is the M\"obius map from $\RR$ to the $j$-th circle, which reduces to a quadratic condition.}.  
Because the extended complex plane is a model of $\PP^1(\CC)$, such a point can be associated to lax vector of $\ZZ[i]^2$.

\begin{definition}
        The unique \emph{edge-labelling} for any Apollonian palace is as follows:  label each edge with the lax vector associated to the corresponding point of tangency. 
\end{definition}

\begin{figure}[ht!]
        \centering
        \includegraphics[width=5.7in]{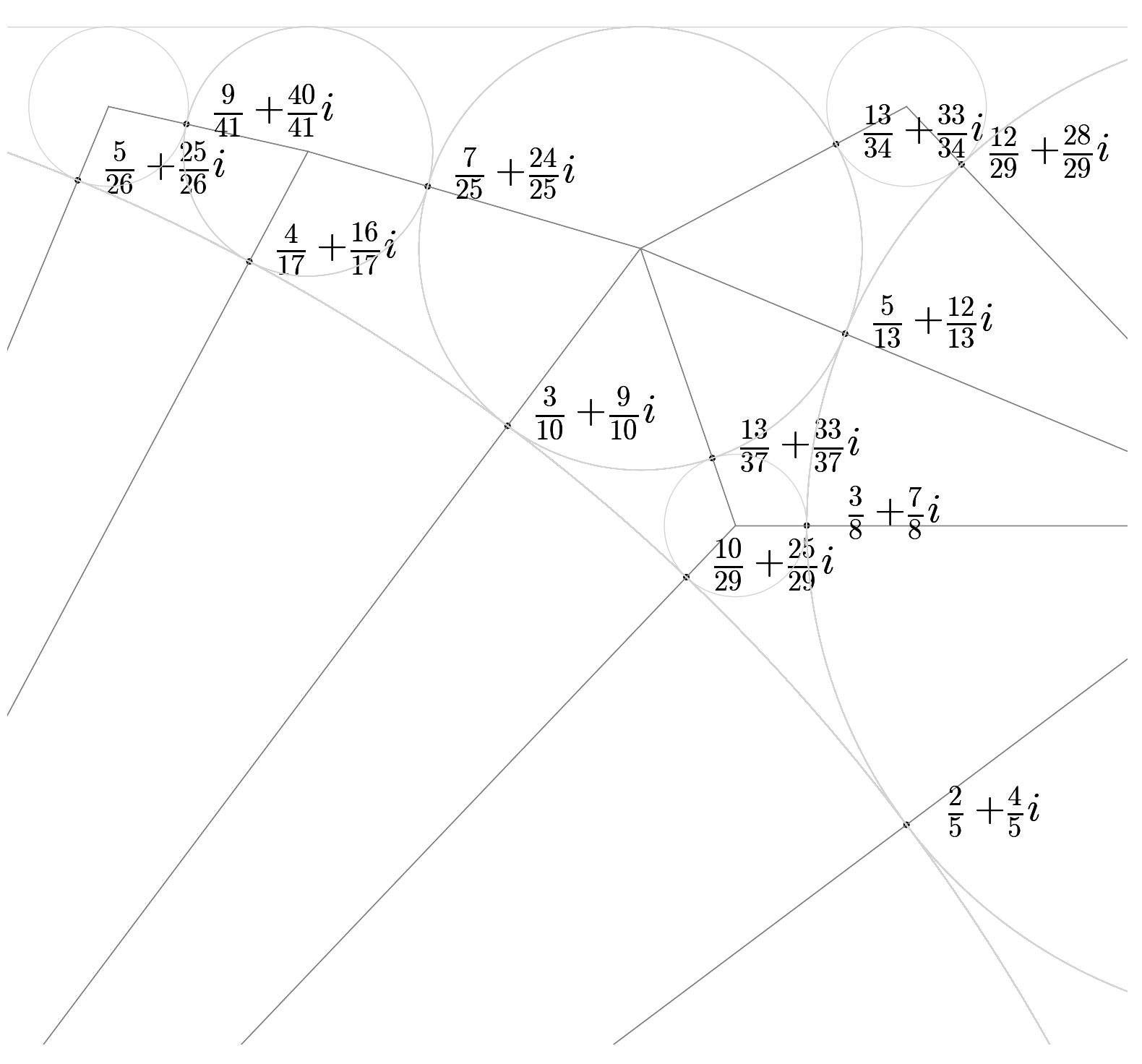}
        \caption{Several circles from the Apollonian strip packing of Figure \ref{fig:strip}.  The Apollonian palace is shown, as are the points of tangency, one of which lies on each edge of the palace.}
        \label{fig:striplax}
\end{figure}

For the strip packing, a few of these are labelled in Figure \ref{fig:striplax}, where one may observe the following remarkable pattern:
        \emph{the lax vector labels on any Apollonian palace are arranged so that, throughout, triangles are superbasis walls, and tetrahedra are ultrabasis chambers.}

For example, in Figure \ref{fig:striplax}, the points of tangency $\frac{5 + 12i}{13} = \frac{3+2i}{3-2i}$, $\frac{3+7i}{8} = \frac{5+2i}{4-4i}$ and $\frac{13+33i}{37} = \frac{3+5i}{6-i}$ lie on three sides of a triangle.  These correspond to primitive vectors forming a superbasis, as illustrated by the identity
\[
        i\begin{pmatrix} 3 + 2i \\ 3 - 2i \end{pmatrix} 
        +\begin{pmatrix} 5 + 2i \\ 4 - 4i \end{pmatrix}
        -\begin{pmatrix} 3+ 5i \\ 6-i\end{pmatrix} = \begin{pmatrix} 0 \\ 0 \end{pmatrix},
\]
and the fact that $2 \times 2$ matrices formed of these columns have unit determinant.

This illustrates the main theorem of this paper:

\begin{theorem}[Main Bijection] 
        \label{thm:mbmain}
The components of the Apollonian kingdom are exactly the edge-labelled Apollonian palaces corresponding to all Gaussian Apollonian circle packings in $\CC_\infty$.  The resulting map from vertices of the kingdom to Gaussian oriented circles in $\CC_\infty$ is a bijection.  
\end{theorem}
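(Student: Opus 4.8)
The plan is to build the bijection in two directions and show they are mutually inverse, by working entirely with the M\"obius action on $\CC_\infty$ and its compatibility with the $\PGL_2(\ZZ[i])$-action on lax vectors of $\ZZ[i]^2$. First I would make the basic dictionary precise: a Gaussian circle is the image $M(\RR)$ of $\RR$ under some $M \in \PGL_2(\ZZ[i])$, and I claim the stabiliser of $\RR$ in $\PGL_2(\CC)$ (as an unoriented circle) is $\PGL_2(\RR)$, while the oriented stabiliser is $\PSL_2(\RR)$; hence oriented Gaussian circles correspond bijectively to cosets $M \cdot \big(\PGL_2(\ZZ[i]) \cap \PSL_2(\RR)\big)$, equivalently to a coset space that I will identify with lax lattices (this is the "first term in the second column" promised but not yet defined in the excerpt). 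Two Gaussian circles are tangent iff a suitable product of the associated M\"obius maps with complex conjugation has a double fixed point, and that double fixed point — the point of tangency — lies in $\QQ(i)_\infty$, hence defines a lax vector of $\ZZ[i]^2$. The heart of the local picture is then: a Descartes quadruple of Gaussian circles maps, under $\PGL_2(\ZZ[i])$, to a fixed standard quadruple, so it suffices to check the combinatorial claim for one explicit quadruple and transport it by the group action.

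Concretely, the key steps in order:
\begin{enumerate}
\item[(1)] Fix the standard model: take the four lines/circles in the strip packing of Figure \ref{fig:strip} (curvatures $0,0,2,2$ after the housefly dilation), compute the four points of tangency explicitly, and verify directly that the associated lax vectors of $\ZZ[i]^2$ form an ultrabasis — i.e.\ realise one of the two standard ultrabases of Figure \ref{fig:standard}. This is the routine base-case computation; it anchors everything.
\item[(2)] Show $\PGL_2(\ZZ[i])$ acts transitively on (oriented) Descartes quadruples of Gaussian circles. This follows because three points of a circle determine a M\"obius transformation, so any three mutually tangent circles can be moved to the standard triple; the fourth circle is then one of two completions, and the element $\begin{pmatrix} 0 & i \\ i & 1\end{pmatrix}$ together with the obvious symmetries swaps the two completions and permutes the standard quadruple transitively.
\item[(3)] Combine (1) and (2): since the tangency-point construction is $\PGL_2(\ZZ[i])$-equivariant (M\"obius maps carry points of tangency to points of tangency, and act on $\PP^1(\QQ(i))$ as they act on lax vectors), every Descartes quadruple of Gaussian circles yields an ultrabasis, every triangle of tangent circles a superbasis wall, and adjacency of two chambers (sharing three circles, differing in the fourth) matches the two standard ultrabases sharing a wall. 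Hence the edge-labelled Apollonian palace of any Gaussian Apollonian packing embeds into the Apollonian kingdom, and it is a full connected component because the packing is generated from one quadruple by repeatedly taking completions, which is exactly the chamber-adjacency move in the kingdom.
\item[(4)] Conversely, starting from any chamber of the kingdom, pick a representing ultrabasis, realise it by Gaussian circles via step (1)–(2) (choose a M\"obius map sending the standard ultrabasis to it), and propagate along walls; well-definedness follows from the fact (already noted in the excerpt) that any pair of adjacent chambers is $\PGL_2(\ZZ[i])$-equivalent to the standard adjacent pair. This produces a Gaussian Apollonian packing and shows the component maps \emph{onto} an Apollonian palace.
\item[(5)] Injectivity of vertices $\to$ oriented circles: a vertex of a chamber is a lax lattice, which under the dictionary of step (1) is precisely a coset determining a unique oriented Gaussian circle; distinct lax lattices give distinct circles because a Gaussian circle determines its defining coset (the oriented stabiliser computation). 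Surjectivity onto Gaussian oriented circles: every Gaussian circle sits in \emph{some} Descartes quadruple of Gaussian circles (apply a M\"obius map to the standard quadruple so that one member is the given circle), hence is a vertex of some chamber.
\end{enumerate}

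The main obstacle I anticipate is step (5) — specifically, pinning down the correspondence "vertex $\leftrightarrow$ lax lattice $\leftrightarrow$ oriented Gaussian circle" so that it is genuinely a \emph{bijection} and not merely a surjection. One must show that a vertex of a chamber (which a priori is only data-determined by the three or four edges meeting it, i.e.\ by a lax lattice) recovers a \emph{single} oriented circle, and that the orientation bookkeeping is consistent across all chambers containing that vertex. This requires a clean description of the oriented stabiliser of $\RR$ inside $\PGL_2(\ZZ[i])$ and a verification that the "direction of travel" convention is M\"obius-equivariant in the way claimed just before Figure \ref{fig:strip}; the subtlety is that $\PGL_2(\ZZ[i])$ contains orientation-reversing elements (e.g.\ induced by complex conjugation composed with a M\"obius map), so one must track which coset representatives preserve versus reverse the orientation of the underlying circle. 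A secondary nuisance is the two-ways-an-ultrabasis phenomenon (taking the dual tetrahedron): one must check that the circle-realisation distinguishes these two, which it does because orientation (interior = right-hand side) picks out which triple of the six lax vectors bounds which face. Once the stabiliser computation is in hand, everything else is equivariant transport of the single explicit computation in step (1), and Theorem \ref{thm:herm} handles the curvature-labelling compatibility separately.
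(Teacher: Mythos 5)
Your overall architecture --- identify circles with lax lattices via cosets of the oriented stabiliser of $\RR$, verify one explicit base quadruple, and transport everything by equivariance under $\PGL_2(\ZZ[i])$ --- is essentially the paper's strategy (the paper proves the vertex bijection first, lattice-theoretically, in Propositions \ref{prop:circlelaxvertexlattice} and \ref{prop:vertexlattices}, and then handles chambers by group transport in Theorem \ref{thm:mbsecond}). You also correctly flag the orientation bookkeeping as a point needing care. But there is a genuine gap at your step (2), and it sits exactly where the paper has to do its real work.

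You justify transitivity of $\PGL_2(\ZZ[i])$ on Descartes quadruples of Gaussian circles by saying that ``three points of a circle determine a M\"obius transformation, so any three mutually tangent circles can be moved to the standard triple.'' Three points determine an element of $\PGL_2(\CC)$, not of $\PGL_2(\ZZ[i])$: there is no reason the resulting matrix has Gaussian integer entries with unit determinant. The natural repair --- use the three tangency points, which lie in $\QQ(i)_\infty$ and hence give lax vectors, and note that the matrix is unimodular when those lax vectors form a superbasis --- is circular, because the assertion that tangency points of a mutually tangent triple form a superbasis is precisely (a piece of) what the Main Bijection claims. The paper escapes this by passing to the Lorentz group: it embeds quadruples in Minkowski space via curvatures, co-curvatures and curvature-centres, proves a mod-$2$ congruence on curvature-centres of tangent Gaussian circles (Lemma \ref{lemma:mod2}), and uses it to factor the matrix of any Descartes quadruple as an integer unimodular matrix times one of two fixed diagonal matrices of determinant $8$, whence the transition matrix $N=AB^{-1}$ between two quadruples is integral (Proposition \ref{prop:so}). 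That integrality argument is the crux of the transitivity you assert, and your proposal contains no substitute for it; without it, steps (3)--(5) have nothing to transport along. Everything else in your outline is sound modulo this point.
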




The proof involves attaching additional structure to the Apollonian kingdom and its palaces.  In addition to edge labels, in the next section we will attach \emph{lax lattices} to the vertices, and demonstrate how these correspond to Gaussian circles, proving Theorem \ref{thm:mbfirst} (the second statement in the theorem above).  The following Section \ref{sec:herm} discusses Hermitian forms which give the curvatures and centres of Gaussian circles.  In Section \ref{sec:kocik}, we review the criterion in curvatures and centres which distinguishes Descartes quadruples.  This uses a perspective due to Kocik, in which one embeds Descartes quadruples as space-like vectors in spacetime.  In Section \ref{sec:lorentz} we extend this perspective and complete the proof of the main bijection.

\section{Lax lattices and Conway's summer residence}

In Conway's palace, the vertices are the important pieces:  the lax vectors.  These are the raw material fed into the quadratic form to populate the topograph with its values.  In preparation for the proof of the Main Bijection, we will interpret the vertices of the Apollonian kingdom as \emph{lax lattices}, to be defined shortly.  We will also see that, independently, Gaussian circles naturally have an interpretation as lax lattices:  it is this relationship that mediates the Main Bijection.

The definition of a lax lattice will bring to light the fact that our motivating analogy is more than just an analogy:  a certain subgraph of the Apollonian kingdom is a copy of Conway's palace.  In a sense, Conway has a summer residence -- in fact, a great many summer residences -- in the Apollonian kingdom.


\subsection{The lax lattices of an Apollonian palace}
The definition of a lax lattice actually comes about more naturally from the perspective of Gaussian circles, so it is there that we will begin.
Recall that a Gaussian circle is the image of $\RR$ under some transformation 
\[
        M = \begin{pmatrix}a & b \\ c & d \end{pmatrix} \in \PGL_2(\ZZ[i]).
\]
The rational points $\QQ$, under $M$, map to all of the $\QQ(i)$-points on the circle.  Under the correspondence between $\QQ(i)_\infty$ and $\PP^1(\QQ(i))$ (viewed as lax vectors), the points of $\QQ$ are the lax vectors with representatives in $\ZZ^2 \subset \ZZ[i]^2$, and their images under multiplication by $M$ are primitive vectors of the rank-two $\ZZ$-lattice
\[
       x\begin{pmatrix} a \\ c \end{pmatrix} + y\begin{pmatrix} b \\ d \end{pmatrix}, \quad x,y \in \ZZ.
\]

This lattice is not well-defined in terms of $M$:  changing $M$ by a scalar multiple produces a scalar multiple of this lattice.  However, its equivalence class up to scaling is well-defined for $M$.  
Requiring $M$ to have Gaussian integer entries leaves us with two choices, differing by multiplication by $i$, which we will take together as forming an equivalence class.

However, it is still not yet clear that it is well-defined in terms of the circle, since many transformations $M$ may map $\RR$ to the circle.  The reader is invited to verify that the difference between two such transformations is precomposition by a transformation fixing $\RR$.  
If we consider only Gaussian circles, then two transformations $M,M' \in \PGL_2(\ZZ[i])$ mapping $\RR$ to the same circle are related by
\[
        M = M' T, \quad T \in \PGL_2(\ZZ).
\]
(Here, $\PGL_2(\ZZ)$ consists of those matrices of $\PGL_2(\ZZ[i])$ which can be expressed with rational integer entries.)  The multiplication by $T$ changes the matrix, but it preserves the lattice: it is just a change of basis.  So the lattice is well-defined for a Gaussian circle.

Finally, we must consider orientation, which adds a small layer of complication:  we will replace $\PGL_2(\ZZ)$ with $\PSL_2(\ZZ)$.  This is the \emph{special linear group}, which we define as follows for a commutative ring $R$:
\[
        \PSL_2(R) = \left. \left\{ \begin{pmatrix} a & b \\ c & d \end{pmatrix} : ad - bc  = 1; a,b,c,d \in R \right\} \right/ R^* .
\]

The transformations of $\PGL_2(\ZZ[i])$ taking $\RR$ to itself, \emph{and preserving orientation}, are given\footnote{We think of $\PSL_2(\ZZ)$ as a subset of $\PGL_2(\ZZ[i])$ defined as those equivalence classes of matrices which contain an element of determinant $1$ with entries in $\ZZ$.} by $\PSL_2(\ZZ)$.
Thus, the collection of M\"obius transformations of $\PGL_2(\ZZ[i])$ taking $\RR$ to a given Gaussian circle (with orientation) forms a left coset of $\PSL_2(\ZZ)$.


Therefore, to take into account the orientation of the circle, the lattice defined above should be considered an \emph{oriented} $\ZZ$-module.  Orientation is endowed by the orientation of the basis; a unimodular (i.e.\ $\PSL_2(\ZZ)$) change of basis preserves orientation, while a change of basis of determinant $-1$ swaps it.  Such transformations have the same effect on the orientation of circles, so that these two notions of orientation are linked.

The conclusion:  circles are lattices!  \emph{Lax lattices}, to be precise:

\begin{definition}
\label{def:laxvertexlattice}
Considering $\ZZ[i]^2$ as a $\ZZ$-module of rank four, let $L$ be an oriented $\ZZ$-submodule of rank two generated by an oriented $\ZZ[i]$-basis for $\ZZ[i]^2$.  The 
pair $\{ L, iL \}$ is called a \emph{lax lattice}.
\end{definition}

In fact, the number theorist may notice that lax lattices of $\ZZ[i]^2$ fall into classes that are in bijection with classes of fractional ideals in all orders of $\ZZ[i]$.  This interpretation is further pursued in an upcoming paper.

\begin{proposition}
        \label{prop:circlelaxvertexlattice}
        Gaussian circles are in bijection with lax lattices under the association described above.
\end{proposition}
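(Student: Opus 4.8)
The plan is to show that the association $M \mapsto \{L, iL\}$ descends to a well-defined map from (oriented) Gaussian circles to lax lattices, and then to exhibit an explicit two-sided inverse. Almost everything needed has already appeared in the discussion preceding Definition~\ref{def:laxvertexlattice}, so the proof is largely a matter of assembling those observations and keeping careful track of orientations. Recall that a Gaussian circle is naturally oriented, being the image of the (clockwise-)oriented real line under a M\"obius transformation.

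First I would confirm the target is as claimed. If $M \in \PGL_2(\ZZ[i])$, then any matrix representative of $M$ with entries in $\ZZ[i]$ has determinant in $\ZZ[i]^* = \{\pm 1, \pm i\}$, so its two columns form a $\ZZ[i]$-basis of $\ZZ[i]^2$; the $\ZZ$-span $L$ of these columns, oriented by this ordered basis, is then an oriented rank-two $\ZZ$-submodule of exactly the kind appearing in Definition~\ref{def:laxvertexlattice}. Two integral representatives of $M$ differ by a scalar in $\ZZ[i]^*$: the factors $\pm 1$ leave $L$ unchanged, while $\pm i$ replaces $L$ by $iL$, so $M$ determines precisely the lax lattice $\{L, iL\}$. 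Combining this with the two facts established above --- that $M, M' \in \PGL_2(\ZZ[i])$ carry $\RR$ to the same \emph{oriented} circle exactly when $M = M'T$ with $T \in \PSL_2(\ZZ)$, and that such a $T$ acts on $L$ as an orientation-preserving change of basis --- shows the map from oriented Gaussian circles to lax lattices is well-defined.

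For injectivity, suppose $M$ and $M'$ yield the same lax lattice. Adjusting the integral representative of $M'$ by a power of $i$ (which changes neither $M'$ as an element of $\PGL_2(\ZZ[i])$ nor the circle it determines), I may assume the columns of $M$ and of $M'$ span the same oriented $\ZZ$-lattice; then $M = M'T$ for an integral change-of-basis matrix $T$ with $\det T = 1$, i.e.\ $T \in \PSL_2(\ZZ)$, and since $\PSL_2(\ZZ)$ fixes $\RR$ with its orientation we get $M(\RR) = M'(\RR)$ as oriented circles. For surjectivity, given a lax lattice $\{L, iL\}$, choose an ordered $\ZZ[i]$-basis $(v_1, v_2)$ of $\ZZ[i]^2$ generating $L$ with the prescribed orientation (possible, since swapping the two basis vectors reverses orientation), and let $M$ have columns $v_1, v_2$; then $\det M \in \ZZ[i]^*$, so $M \in \PGL_2(\ZZ[i])$, and the lax lattice attached to the oriented Gaussian circle $M(\RR)$ is by construction $\{L, iL\}$.

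The step I expect to need the most care is the orientation dictionary: that the orientation $M(\RR)$ inherits as the continuous image of the clockwise-oriented real line coincides with the orientation that the ordered pair of columns of $M$ puts on $L$, and that this compatibility is consistent with the identification of $L$ with $iL$. I would pin this down by checking the base case where $M$ is the identity matrix --- where both orientations are the standard ones --- and then propagating over the relevant left $\PSL_2(\ZZ)$-coset and $\ZZ[i]^*$-orbit, using the observation made just before Definition~\ref{def:laxvertexlattice} that reversal of a circle's orientation and reversal of a lattice's orientation are both governed by the sign of the same determinant. Everything else is the routine change-of-basis linear algebra already invoked in the text.
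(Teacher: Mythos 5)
Your proof is correct, and its overall skeleton (well-definedness, injectivity, surjectivity) matches the paper's, with the surjectivity step being essentially identical: pick an ordered basis of a representative lattice, assemble it into a matrix $M$, and observe that $M(\RR)$ has the given lax lattice. Where you genuinely diverge is injectivity. The paper argues geometrically: the lax lattice determines the set of $\QQ(i)$-points of the circle (the primitive vectors of $L$, read in $\PP^1(\QQ(i))$, are exactly those points), and two distinct circles cannot share their full set of $\QQ(i)$-points, so equal lattices force equal circles. You instead argue algebraically at the level of the group: after normalizing the integral representative by a power of $i$, two matrices producing the same oriented lattice differ by an integral change of basis of determinant $1$, i.e.\ by an element of $\PSL_2(\ZZ)$, which fixes $\RR$ with its orientation, so the oriented circles coincide. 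Your route is longer but has the virtue of tracking orientation explicitly all the way through (the paper's one-line injectivity argument, read literally, only identifies the underlying unoriented circles and leaves the orientation bookkeeping implicit in the surrounding discussion); the paper's route is shorter and makes the conceptual point that the circle is recoverable as the closure of its $\QQ(i)$-points. Both are valid, and your final paragraph correctly isolates the one place where care is actually needed, namely the compatibility of the two notions of orientation, which the text sets up just before Definition~\ref{def:laxvertexlattice}.
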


\begin{proof}
        The association describes a well-defined map from circles to lattices.  Since two different circles cannot have equal sets of $\QQ(i)$-points, the map is injective.  To see that it is surjective, take any ordered basis of any lax lattice $L$ (in the sense of taking an ordered basis for one of its representative lattices), and form a matrix $M$ with these column vectors.  Then, it is immediate that the circle $M(\RR)$ has lax lattice $L$.
\end{proof}

Given an Apollonian palace, this gives a \emph{lax lattice} for each vertex.  An example quadruple of Gaussian circles from Figure \ref{fig:1223nohalf} is given with lax lattices in Figure \ref{fig:quadLVL}.


\begin{figure}
        \centering
        \includegraphics[width=4.9in]{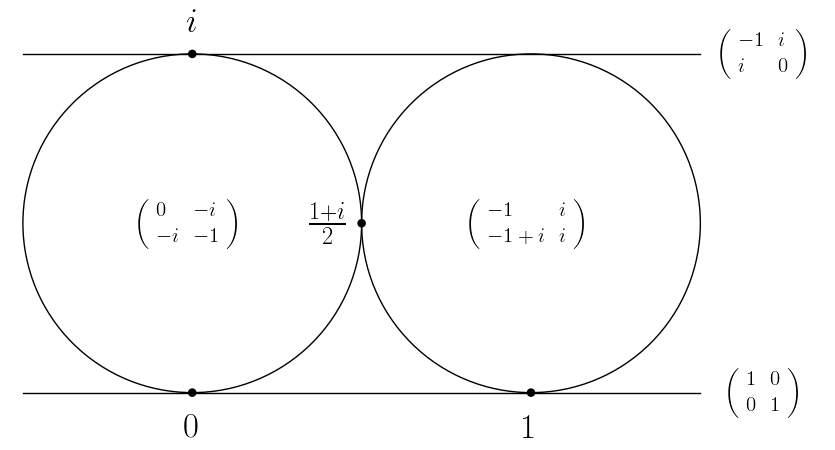}
        \caption{A Descartes quadruple (called the \emph{base quadruple}) from the packing of Figure \ref{fig:strip}, with lax lattices for each circle shown in the form of a pair of column vectors forming a $\ZZ$-basis.}
        \label{fig:quadLVL}
\end{figure}

\begin{figure}
        \centering
        \includegraphics[width=4.9in]{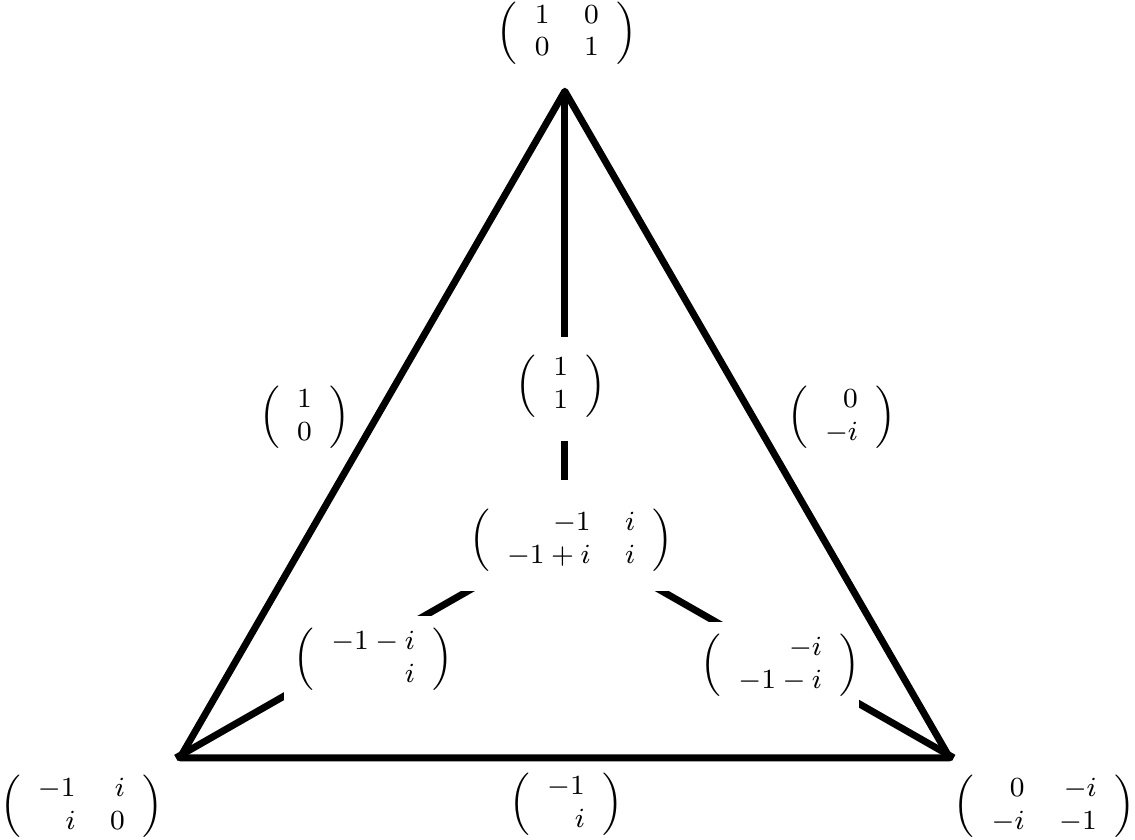}
        \caption{With $u = (1,0)^T$, $v=(0,-i)^T$, $w =(-1,i)^T$, one obtains the ultrabasis chamber associated to the quadruple of Figure \ref{fig:quadLVL}.  This is called the \emph{base chamber}.}
        \label{fig:chamberLVL}
\end{figure}

It is well-defined to speak of a lax vector $\{ v, -v, iv, -iv\}$ being contained in a lax lattice $\{L, iL\}$ (i.e.\ $v \in L$ or $v \in iL$).  Tangent circles contain a unique common point; this is given by a lax vector common to the two lax lattices.  So a lax lattice associated to the circle at one vertex of an Apollonian palace contains all the lax vectors labelling its adjacent edges.  For example, the upper-left-most circle in Figure \ref{fig:striplax} has a lax lattice containing\footnote{Since the corresponding rational points are $\frac{5i}{5+i} = \frac{5}{26}+\frac{25}{26}i$ and $\frac{-4+5i}{4+5i}=\frac{9}{41} + \frac{40}{41}i$, which label the nearby tangency points.} the lax vectors $(5i, 5+i)^T$ and $(-4+5i, 4+5i)^T$.   In fact, the lax lattice for that circle is given by taking these vectors as an ordered basis. 

We now have a labelling for both edges and vertices of an Apollonian palace.  


\subsection{The lax lattices of the Apollonian kingdom}
\label{sec:kingdomvertices}


To prove the Main Bijection, we will decorate the vertices of the Apollonian kingdom by lax lattices without reference to Gaussian circles.  Then the connection between lattices and circles of the previous section completes the bijection between vertices and circles.

The decoration will be straightforward to define:  simply label each vertex of the standard ultrabases of Figure \ref{fig:standard} with an ordered pair of vectors, according to Figure \ref{fig:standardlattices}.  These vectors then serve as a basis for a lax lattice.  Since every chamber of the kingdom can be expressed as a standard ultrabasis, we obtain a labelling of the entire kingdom.

\begin{figure}
        \centering
        \includegraphics[width=2.3in]{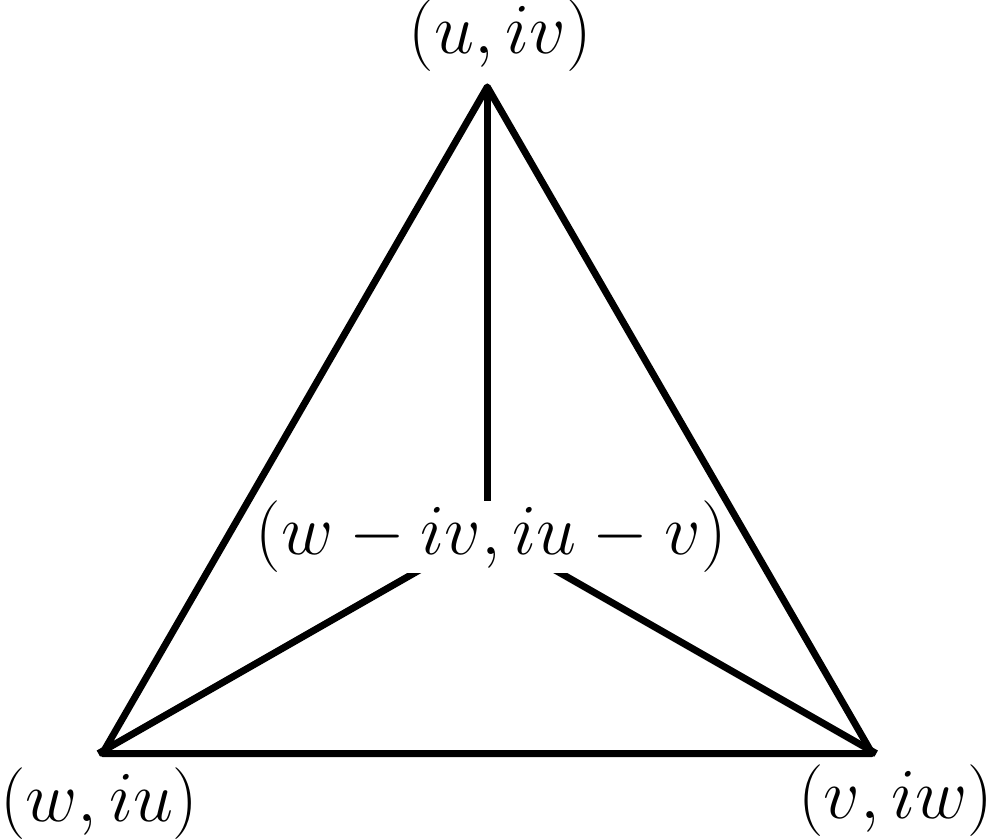}
        \quad \includegraphics[width=2.3in]{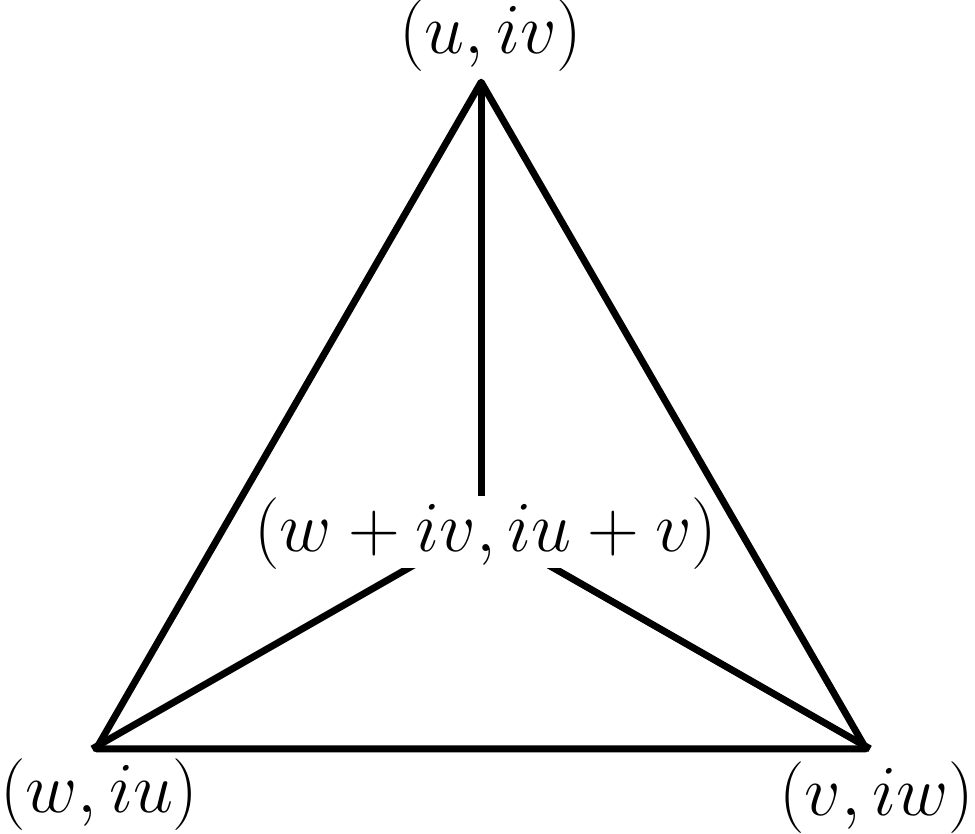}
        \caption{The left and right standard ultrabases of Figure \ref{fig:standard}, labelled with lax lattices.}
        \label{fig:standardlattices}
\end{figure}



\begin{proposition}
        \label{prop:laxwelldefined}
        The lax lattice attached to a vertex in the Apollonian kingdom is well-defined.  It is the unique lax lattice, up to orientation, containing all the lax vectors on edges adjacent to the vertex.
\end{proposition}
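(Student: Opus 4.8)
The plan is to isolate a canonical lax lattice at each vertex by a uniqueness argument, and then check that the labelling of Figure~\ref{fig:standardlattices} realizes it. First I would prove the following lemma: \emph{a superbasis $\{[u],[v],[w]\}$ of lax vectors of $\ZZ[i]^2$ is contained in exactly one lax lattice} (each of its three lax vectors lying in that lax lattice, in the sense defined just before this proposition). For existence, normalize the representatives so that $u+v+w=0$ --- a superbasis always admits such representatives, as noted just before Figure~\ref{fig:standard} --- and take $L=\ZZ u+\ZZ v$; since $u,v$ form a $\ZZ[i]$-basis of $\ZZ[i]^2$, the pair $\{L,iL\}$ is a lax lattice, and $L$ contains $u$, $v$, and $w=-u-v$. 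For uniqueness, if $L'$ is any lax lattice containing representatives $u',v',w'$, then $u',v'$ span $\ZZ[i]^2$ over $\ZZ[i]$, so $\ZZ[i]^2/(\ZZ u'+\ZZ v')$ is torsion-free; hence $\ZZ u'+\ZZ v'$ is saturated of rank two and must equal $L'$. Writing $u'=\mu u$, $v'=\nu v$, $w'=\epsilon w$ with units $\mu,\nu,\epsilon$ of $\ZZ[i]$, the requirement $w'\in\ZZ u'+\ZZ v'$ forces $\epsilon/\mu$ and $\epsilon/\nu$ to be units of $\ZZ[i]$ that lie in $\ZZ$, hence $\pm1$; so $L'=\epsilon L$ and $\{L',iL'\}=\{L,iL\}$. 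This lemma yields two things: at most one lax lattice can contain every edge incident to a given vertex, since among those edges one always finds the three edges meeting the vertex inside any single chamber, and these three form a superbasis (each pair of them lies in one of the three chamber-walls through that vertex, and walls are superbases); and, conversely, any lax lattice containing those three edges is forced to be the unique one.

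Next I would inspect Figures~\ref{fig:standard} and~\ref{fig:standardlattices}: for each of the two standard ultrabases and each of its four vertices, one checks that the ordered pair assigned to the vertex is an oriented $\ZZ[i]$-basis of $\ZZ[i]^2$ whose $\ZZ$-span contains a representative of each of the three edge-labels at that vertex. (In the figures the pair is, up to order, two of those three edges, so only containment of the third is a computation, made using $u+v+w=0$ together with the $i$-twisted relations displayed in Figure~\ref{fig:standard}.) By the lemma, the lattice assigned by the labelling at a vertex of a chamber is then precisely the unique lax lattice containing that chamber's three edges at that vertex. To see that all chambers meeting a common vertex assign the same lattice, I would use that the labelling rule of Figure~\ref{fig:standardlattices} is given by $\ZZ[i]$-linear formulas in $u,v,w$, so it is equivariant for the natural action of $\GL_2(\ZZ[i])$ on vectors, lax lattices, and ultrabasis chambers; that action is transitive on chambers of each of the two types and carries the standard left/right pair to an arbitrary pair of chambers sharing a wall. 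Consistency then reduces to a finite check near the base chamber of Figure~\ref{fig:chamberLVL}: that its four vertex-labels do not depend on which wall is taken as the ``$u+v+w=0$'' wall nor on the left/right choice, and that along each of its four walls the two common vertices receive the same ordered pair (up to a determinant-one change of basis and multiplication by $i$) from both adjacent chambers. Granting that the chambers through a fixed vertex are connected through the walls incident to that vertex, the pairwise agreements chain together, so every chamber at a vertex $P$ assigns a single lax lattice $L_P$; by the lemma, $L_P$ --- which contains the three-edge superbasis coming from any chamber --- is the unique lax lattice containing \emph{all} edges incident to $P$, which is the assertion.

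The short parts are the lemma and the figure inspection; the real work, and the main obstacle, is the last step's organization, in particular the fact that the star of a vertex (the chambers containing it) is connected through its incident walls, without which the across-a-wall comparisons do not propagate. I would establish this connectivity from the local combinatorics of standard ultrabases; it is also a byproduct of the identification, developed in this section, of the link of a vertex with Conway's palace, which tiles a plane and is in particular connected. Throughout, orientation must be carried along: a lax lattice is an \emph{oriented} module, so the comparisons above are between ordered bases, not merely sublattices, and it is the $\GL_2(\ZZ[i])$-equivariance that makes the reduction to finitely many checks automatically respect orientation.
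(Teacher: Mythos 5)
Your proposal is correct and, at its core, follows the same route as the paper: a finite verification on the standard ultrabases of Figure \ref{fig:standard} that the vertex labels of Figure \ref{fig:standardlattices} do not depend on how a chamber is written in standard form and agree across a shared wall, plus a saturation argument (a $\ZZ$-basis of a lax lattice is a $\ZZ[i]$-basis of $\ZZ[i]^2$, so two lax lattices cannot be properly nested) for the uniqueness clause. Your lemma that a superbasis lies in exactly one lax lattice is a genuine sharpening of the paper's uniqueness argument --- the paper only uses that \emph{all} edges adjacent to the vertex pin down the lattice, whereas you show the three edges of a single chamber already do --- and your unit computation for it is sound. Two remarks. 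First, a wall is a triangle, so two chambers glued along it share \emph{three} vertices, not two; this only changes the size of your finite check. Second, and more substantively, the ``main obstacle'' you identify --- connectivity of the star of a vertex --- is vacuous: the Apollonian kingdom is \emph{defined} (Section \ref{sec:appkingdom}) as the quotient of the disjoint union of ultrabasis chambers by identifications along common walls, so two chamber-vertices represent the same kingdom vertex precisely when they are joined by a chain of wall identifications, each step of which passes through that vertex. Invariance of the label under a single relabelling and a single wall-gluing therefore already gives well-definedness on the quotient, with nothing left to chain together. This matters because your fallback argument for connectivity --- the identification of the link of a vertex with Conway's palace --- is Proposition \ref{prop:prince}, whose proof invokes the very lax lattice whose well-definedness is at stake here, so that route would be circular. (The question of whether two chambers with the same surrounding lax lattice might nonetheless fail to be glued at a common kingdom vertex is a separate matter, settled later by Proposition \ref{prop:vertexlattices}.)
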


\begin{proof}
It is a finite, if slightly tedious, calculation to check that if a given ultrabasis is labelled as a standard ultrabasis in a different way (i.e.\ choosing a different $u$, $v$, $w$), the lax lattice labels of its vertices are unchanged.  It then remains to observe that any two adjacent ultrabases are of the form of the standard pair, and for such a pair, their common vertices agree in labelling.

The lax lattice clearly does contain all the lax vectors on edges adjacent to its vertex.  In fact, every lax vector adjacent to the vertex forms part of a basis for the lax lattice.  Therefore, if another lax lattice contains all these lax vectors, it also contains the first lax lattice.  But two lax lattices cannot be related by proper containment, as their $\ZZ$-bases must be $\ZZ[i]$-bases for $\ZZ[i]^2$, and therefore are related by an invertible change of basis over $\ZZ[i]$, and hence necessarily over $\ZZ$.
\end{proof}

An example chamber is shown in Figure \ref{fig:chamberLVL}.  
In the next section, we will show that the map just defined from vertices to lax lattices is, in fact, a bijection.

\subsection{Conway's summer residence}
\label{sec:prince}

Suppose one were to stand at a chosen vertex of the Apollonian kingdom -- call it the \emph{prince} -- and survey the adjacent vertices, which we will call the \emph{subjects} making up his \emph{court}.

Each subject is connected by an edge to the prince.  That edge is labelled with a lax vector which is an element of the prince's lax lattice.  If we fix a choice of representative lattice (say $L$ instead of $iL$), then this lax vector (an equivalence class of four vectors) contains exactly two vectors which are contained in $L$, i.e.\ $\{ u, -u \}$ for some $u$.  Identifying a subject with the edge connecting it to the prince, we can consider each subject to be labelled with such a `$\ZZ$-lax vector' $\{u, -u\}$ contained in the rank two $\ZZ$-module $L$.  

\begin{definition}
        Choosing a vertex of the Apollonian kingdom, called the \emph{prince}, we define the \emph{court} of the prince to be the subgraph of the Apollonian kingdom induced by the vertices adjacent to the prince.  The vertices are called \emph{subjects}.  Each subject has a \emph{title}, which is a label by a $\ZZ$-lax vector of the prince's lattice, as described in the preceeding paragraph.
\end{definition}


\begin{proposition}
        \label{prop:prince}
        The court, labelled by its titles, has one vertex for each title in $L$, such that any two are connected exactly when they form a basis for $L$.  In other words, it forms a copy of Conway's palace, under any isomorphism of $L$ with $\ZZ^2$.
\end{proposition}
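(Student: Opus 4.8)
The plan is to fix the prince $p$, choose a representative lattice $L$ for its lax lattice (a rank‑two $\ZZ$‑submodule of $\ZZ[i]^2$ spanned by a $\ZZ[i]$‑basis), and move everything through two elementary facts about such an $L$. First, a vector primitive in $L$ is automatically primitive in $\ZZ[i]^2$: if $u\in L$ has coprime integer coordinates with respect to a $\ZZ$‑basis $e_1,e_2$ of $L$, a B\'ezout relation $xa+yb=1$ shows any $\ZZ[i]$‑divisor of $u$ is a unit. Second, a ``determinant trick'': since $e_1,e_2$ is simultaneously a $\ZZ$‑basis of $L$ and a $\ZZ[i]$‑basis of $\ZZ[i]^2$, any pair $a,b\in L$ that is a $\ZZ[i]$‑basis of $\ZZ[i]^2$ has change‑of‑basis matrix with integer entries and determinant a unit of $\ZZ[i]$, hence $\pm1$, so $\{a,b\}$ is already a $\ZZ$‑basis of $L$. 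Using $L\cap iL=0$, a $\ZZ$‑primitive $u\in L$ yields a lax vector $\{u,-u,iu,-iu\}$ of $\ZZ[i]^2$ meeting $L$ in exactly $\{u,-u\}$; this defines an injection $\iota$ from $\ZZ$‑lax vectors of $L$ to edges of the kingdom, and the goal is to show its image is exactly the set of edges at $p$, with $\iota^{-1}$ the title map.

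Next I would establish the bijection between subjects and $\ZZ$‑lax vectors of $L$ by enumerating the chambers at $p$. Every chamber through $p$ is a standard ultrabasis with $p$ among its four vertices; reading Figure~\ref{fig:standardlattices}, the lattice at $p$'s slot equals the $\ZZ$‑span of two of the three superbasis labels meeting there, so the constraint that this lattice be $\{L,iL\}$ forces those two labels, say $\iota(u),\iota(v)$, to have representatives forming a $\ZZ$‑basis of $L$; thus chambers at $p$ correspond to $\ZZ$‑superbases $\{u,v,-u-v\}$ of $L$ (up to the left/right choice), and the three edges of such a chamber at $p$ carry the labels $\iota(u),\iota(v),\iota(-u-v)$. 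Conversely any incident edge carries a lax vector lying in $\{L,iL\}$, and after adjusting by $i$ and applying Proposition~\ref{prop:laxwelldefined} (the label is part of a basis of the lax lattice) it meets $L$ in a $\ZZ$‑primitive pair, so it is in the image of $\iota$ and its title is well defined. Since every $\ZZ$‑primitive $u\in L$ extends to a $\ZZ$‑basis, hence to a $\ZZ$‑superbasis, $\iota(\{u,-u\})$ occurs as an edge at $p$; and distinct subjects give distinct incident edges which, by the standard‑ultrabasis description of the star of $p$, carry distinct labels. Hence subjects correspond bijectively to $\ZZ$‑lax vectors of $L$ (the ``one vertex per title'' assertion), with the correspondence given by the title.

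Finally, for the adjacency statement: if $\{u_1,u_2\}$ is a $\ZZ$‑basis of $L$ it sits inside the $\ZZ$‑superbasis $\{u_1,u_2,-u_1-u_2\}$ of $L$, which by the previous paragraph is a wall of a chamber at $p$; two of that chamber's non‑prince vertices are exactly the subjects with titles $\{u_1,-u_1\},\{u_2,-u_2\}$, and they are joined by an edge of the chamber, hence of the court. Conversely, if subjects $s_1,s_2$ are adjacent then $\{p,s_1,s_2\}$ is a triangle of the kingdom; granting it is a \emph{wall}, the labels of $ps_1$ and $ps_2$ form part of a superbasis of $\ZZ[i]^2$, hence a $\ZZ[i]$‑basis, and since their chosen representatives $u_1,u_2$ lie in $L$ the determinant trick makes $\{u_1,u_2\}$ a $\ZZ$‑basis of $L$; this finishes the identification with Conway's palace under any $L\cong\ZZ^2$. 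The hard part is precisely this last structural input — that every triangle through $p$ is a wall, equivalently that the star of $p$ assembled from the standard ultrabases accounts for all of the court's edges with no accidental adjacencies. I expect to handle it by the same finite standard‑ultrabasis bookkeeping behind Proposition~\ref{prop:laxwelldefined}, checking that around $p$ the chambers glue together exactly as the superbasis faces of one copy of Conway's topograph on $L$; alternatively one argues directly that the label of $s_1s_2$ is not incident to $p$, forces it together with $u_1$ to be $\ZZ[i]$‑independent inside $\mathrm{lattice}(s_1)$, and uses a wall at $s_1$ plus the determinant trick to transfer the basis conclusion back to $L$.
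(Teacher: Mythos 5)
Your preliminary lemmas (primitivity transfers from $L$ to $\ZZ[i]^2$; the determinant trick showing a $\ZZ[i]$-basis of $\ZZ[i]^2$ lying in $L$ is automatically a $\ZZ$-basis of $L$; the reading of the standard ultrabasis labels at the prince's slot) are all sound and consistent with what the paper does elsewhere. But there is a genuine gap at the load-bearing step. You prove only that each chamber at $p$ \emph{yields} a $\ZZ$-superbasis of $L$, and then silently invoke the converse --- that \emph{every} $\ZZ$-superbasis of $L$ is realized by some chamber at $p$ --- when you write ``which by the previous paragraph is a wall of a chamber at $p$'' and ``hence $\iota(\{u,-u\})$ occurs as an edge at $p$.'' That converse is precisely the hard content of the proposition: given a $\ZZ$-basis $u_1,u_2$ of $L$, the standard ultrabasis built on the corresponding superbasis certainly exists somewhere in the kingdom and has a vertex whose lax lattice is $L$, but to conclude that this vertex \emph{is} $p$ you would need to know that each lax lattice occurs at only one vertex --- which is Proposition \ref{prop:vertexlattices}, proved in the paper \emph{after} and \emph{by means of} Proposition \ref{prop:prince}. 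As the development is organized, that route is circular, and your write-up supplies no substitute.

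The paper closes exactly this gap with a non-constructive connectivity argument that your proposal lacks: the walls opposite the prince in chambers at $p$ form, after dualizing, a subgraph of Conway's topograph for $L$ in which every vertex has valence three (each of the three prince-containing walls of such a chamber is shared with exactly one other chamber, which again contains $p$); since the topograph is connected and of valence three, this subgraph is the whole topograph, so every superbasis --- hence every basis and every primitive vector of $L$ --- is reached from a single starting chamber. You need this propagation step (or an equivalent one) to make your enumeration of the court complete. Separately, you flag but do not discharge the claim that every triangle $\{p,s_1,s_2\}$ of the court is a wall, on which your ``adjacent $\Rightarrow$ basis'' direction rests; the paper takes the corresponding assertion (connected subjects label two sides of a wall touching the prince) as built into the construction of the kingdom, so if you keep your route you must either carry out that bookkeeping explicitly or rearrange so that only the wall-to-basis direction is used.
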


\begin{proof}
Two subjects are connected exactly when the two lax vectors label two sides of a wall touching the prince.  Since two edges of a wall form a basis for the lax lattice, any two connected subjects form such a basis.  Choosing an isomorphism of $L$ with $\ZZ^2$, the graph of subjects is a subgraph of Conway's palace.

But the graph of subjects also contains every wall (triangle) opposite the prince in any chamber adjacent to the prince.  Hence its dual (the graph consisting of vertices for each such wall, connected when walls touch) is a valence three graph, and a subgraph of Conway's topograph.  Since Conway's topograph is connected and of valence three \cite{\Conway}, it is all of Conway's topograph. 
\end{proof}

\subsection{Proof of the Main Bijection, First Part}

\begin{proposition}
        \label{prop:vertexlattices}
        Each lax lattice appears at exactly one vertex of the Apollonian kingdom.
\end{proposition}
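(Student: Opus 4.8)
The plan is to exhibit both a surjection and an injection between vertices of the Apollonian kingdom and lax lattices, most of the work for surjectivity already being in place. For surjectivity: by Proposition~\ref{prop:circlelaxvertexlattice} every lax lattice corresponds to a Gaussian circle, and I would like to realize that circle inside some Apollonian palace, hence inside some component of the kingdom. So first I would observe that any Gaussian circle belongs to a Gaussian Apollonian circle packing --- indeed, take any Descartes quadruple of Gaussian circles containing it (one exists: produce three further mutually tangent Gaussian circles by applying suitable elements of $\PGL_2(\ZZ[i])$, or invoke the explicit strip packing of Figure~\ref{fig:strip} and the transitivity of $\PGL_2(\ZZ[i])$ on Gaussian circles coming from the fact that $\PGL_2$ acts transitively on triples of points of $\PP^1(\QQ(i))$) and generate the packing. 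That packing's palace, with its edge-labelling, sits inside the kingdom: its triangles are superbasis walls and its tetrahedra ultrabasis chambers --- this is exactly the content flagged before Theorem~\ref{thm:mbmain} and will be established as part of the Main Bijection machinery, but for the present proposition I only need that the vertex of that palace carrying our circle is a vertex of the kingdom whose attached lax lattice (Proposition~\ref{prop:laxwelldefined}) is the one we started with, by the compatibility of the vertex-labelling with Proposition~\ref{prop:circlelaxvertexlattice}.

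For injectivity, I would argue purely combinatorially inside the kingdom, not using circles at all. Suppose two vertices $P$ and $P'$ carry the same lax lattice $\{L, iL\}$. By Proposition~\ref{prop:laxwelldefined}, $L$ is exactly the set of $\ZZ$-multiples of lax vectors labelling edges at $P$, and likewise at $P'$; in particular the \emph{set of edge-labels at $P$ equals the set of edge-labels at $P'$} (both equal the set of lax vectors of $\ZZ[i]^2$ lying in $L \cup iL$). Now pick any edge $e$ at $P$, labelled by a lax vector $\ell$; the same $\ell$ labels an edge $e'$ at $P'$. An edge of the kingdom has exactly two endpoints, so it suffices to show $e = e'$, i.e. that the two endpoints of $e$ are determined by $\ell$ together with one endpoint. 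Equivalently: a lax vector $\ell$ and a lax lattice $\{L,iL\}$ containing it determine the lax lattice at the \emph{other} end of the corresponding edge. Here I would use Conway's summer residence (Proposition~\ref{prop:prince}): standing at $P$, the subject across edge $e$ is identified by its title, a $\ZZ$-lax vector of $L$, namely $\{u,-u\}$ with $\ell = \{u,-u,iu,-iu\}$; and the court is literally Conway's palace on $L$. But the lax lattice at that subject is the unique lax lattice containing all lax vectors on \emph{its} adjacent edges, and among those adjacent edges are the two walls of the chamber through $e$ opposite... --- this needs care, so instead I would finish by the following cleaner route.

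The clean finish: show that the subgraph of the kingdom on the vertices carrying lattices contained in a fixed court is rigid enough to force $P = P'$ directly. Concretely, the vertex $P$ is the unique vertex all of whose adjacent edge-labels lie in $L$ (Proposition~\ref{prop:laxwelldefined} gives existence; for uniqueness, if $P'$ had the same property then $P$ and $P'$ are both adjacent to the subject across any common edge, and walking out one more step the standard-ultrabasis description of Figure~\ref{fig:standardlattices} shows that the chamber containing $e$ on the $L$-side is determined, since in a standard ultrabasis the vertex labelled $L$ and an adjacent edge-label determine $u,v,w$ up to the symmetries that fix that vertex-label). I expect the main obstacle to be precisely this last uniqueness step --- ruling out that two \emph{different} vertices of the kingdom could have identical stars of edge-labels --- and I would handle it by the same ``finite if slightly tedious calculation'' used in Proposition~\ref{prop:laxwelldefined}: in each of the two standard ultrabases of Figure~\ref{fig:standardlattices}, check that the map (vertex-lattice-label, incident-edge-label) $\mapsto$ (opposite-vertex-lattice-label along that edge) is well-defined, so that starting from the lattice $L$ at $P$ and using that $L$ alone determines the full set of incident edge-labels, one reconstructs every neighboring vertex-label, then every chamber, and by connectedness of each component (Proposition~\ref{prop:prince} shows the court is Conway's palace, which together with the tree structure of the topograph propagates to all of the component) the entire component together with the location of $P$ in it; hence $P$ is determined by $\{L,iL\}$, giving injectivity. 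Combining with surjectivity proves the proposition.
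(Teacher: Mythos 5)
Your surjectivity argument is circular. To place a Gaussian circle's lax lattice at a vertex of the kingdom you invoke the claim that an edge-labelled Apollonian palace ``sits inside the kingdom,'' with triangles as superbasis walls and tetrahedra as ultrabasis chambers, and that the palace vertex carrying your circle receives the correct lattice label. That claim is precisely the unproven ``remarkable pattern'' displayed before Theorem~\ref{thm:mbmain}; it is the conclusion of the Main Bijection (Theorem~\ref{thm:mbsecond}), whose proof depends on the present proposition, so it is not available here. No circles are needed: a lax lattice is by definition generated by an ordered $\ZZ[i]$-basis $(u,v)$ of $\ZZ[i]^2$, and the vertex-labelling of Figure~\ref{fig:standardlattices} places the lattice generated by $u,v$ at a vertex of the standard ultrabasis built on the superbasis $\{u,-iv,-u+iv\}$. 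Running over all ordered bases already produces every lax lattice; this is the paper's one-line surjectivity argument.

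For injectivity your instinct is right but you stop exactly at the decisive step, which you yourself flag as ``the main obstacle.'' The difficulty is not that two vertices with the same lattice share the same set of incident edge-labels --- that follows from Proposition~\ref{prop:laxwelldefined} --- but that a basis $\{u,v\}$ of lax vectors lies in \emph{four} distinct superbases of $\ZZ[i]^2$, so knowing that both $P$ and $P'$ meet edges labelled $u$ and $v$ does not yet place them on a common wall. The missing lemma, which the paper extracts from Proposition~\ref{prop:prince}, is that the ordered (oriented) basis $(u,v)$ of $L$ singles out one of the four, namely $\{u,-iv,-u+iv\}$, as the wall the prince touches between those two edges. Since each superbasis wall occurs exactly once in the kingdom (a superbasis lies in exactly two ultrabases and the two chambers are glued along it), and a triangle has a unique vertex incident to two named sides, the prince is pinned down and $P=P'$. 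Your proposed reconstruction-by-propagation could be made to work once this local well-definedness is actually verified, but as written it is deferred to an unexecuted ``tedious calculation,'' and without appealing to the uniqueness of walls in the kingdom it would only yield an isomorphism of labelled components, not equality of vertices.
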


\begin{proof}
        The lax lattice generated by $u, v$ appears on one vertex of the superbasis wall $u, -iv, -u+iv$.  Running over all ordered bases $u,v$ of $\ZZ[i]^2$, we find all possible lax lattices.
        
        Conversely, if $(u,v)$ is an ordered basis of a lax lattice $L$ associated to a prince, then by Proposition \ref{prop:prince}, the prince touches the superbasis wall $u, -iv, -u+iv$ between the sides $u$ and $v$.  Since superbasis walls appear only once in the Apollonian kingdom, this identifies the prince uniquely:  each lax lattice appears only once.\footnote{Note that for each lax lattice there is another having the same representative lattices, oriented oppositely.  It will have basis $(v, u)$ or $(u, -v)$ and touch superbasis wall $u, iv, -u-iv$.}
\end{proof}

Propositions \ref{prop:circlelaxvertexlattice} and \ref{prop:vertexlattices} together give us the first part of our central result.

\begin{theorem}[Main Bijection, First Part]
        \label{thm:mbfirst}
        The vertices of the Apollonian kingdom are in bijection with the collection of Gaussian circles. 
\end{theorem}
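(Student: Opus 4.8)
The plan is to obtain this bijection simply by composing two correspondences that have already been set up. Proposition~\ref{prop:laxwelldefined} provides a well-defined map $\Phi$ from the vertices of the Apollonian kingdom to the set of lax lattices of $\ZZ[i]^2$, sending a vertex to the unique lax lattice containing all the lax vectors labelling its incident edges (equivalently, the lattice spelled out by the labels of Figure~\ref{fig:standardlattices} in any chamber presenting the vertex as part of a standard ultrabasis). Proposition~\ref{prop:vertexlattices} asserts that $\Phi$ is a bijection. Separately, Proposition~\ref{prop:circlelaxvertexlattice} provides a bijection $\Psi$ between lax lattices and Gaussian circles, obtained by sending a Gaussian circle $M(\RR)$, for $M\in\PGL_2(\ZZ[i])$, to the oriented $\ZZ$-span of the columns of $M$, taken up to the unit $i$. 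The bijection asserted by the theorem is then just $\Psi\circ\Phi$.

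Concretely, I would first recall the statement and bijectivity of $\Phi$ (Propositions~\ref{prop:laxwelldefined} and~\ref{prop:vertexlattices}), then recall the statement and bijectivity of $\Psi$ (Proposition~\ref{prop:circlelaxvertexlattice}), and then conclude that the composite $\Psi\circ\Phi$ is a bijection from the vertex set of the Apollonian kingdom onto the collection of Gaussian circles. Nothing more is required for the statement as written: it speaks only of the underlying vertex set and makes no claim about how the components of the kingdom distribute themselves among the individual Apollonian packings.

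The hard part is not located in this proof at all; it was discharged in the cited propositions. The genuinely substantive inputs are: the ``finite, if slightly tedious'' verification that the lattice labels of Figure~\ref{fig:standardlattices} do not depend on how a chamber is presented as a standard ultrabasis; the surjectivity and injectivity of $\Phi$, which in turn rested on Conway's summer residence (Proposition~\ref{prop:prince}) to match each ordered basis of a prince's lattice with a superbasis wall touching the prince; and the circle--lattice dictionary of Proposition~\ref{prop:circlelaxvertexlattice}. The only subtlety worth flagging explicitly is compatibility — one should note that, for a given Gaussian Apollonian packing, the lax lattice $\Phi$ attaches to a vertex of the corresponding kingdom component agrees with the lax lattice the circle construction attaches to the matching circle of the packing — but identifying that ``matching,'' i.e.\ showing the components of the kingdom are precisely the edge-labelled Apollonian palaces, is the content of the full Main Bijection (Theorem~\ref{thm:mbmain}) handled in the later sections, and is intentionally not part of this first part.
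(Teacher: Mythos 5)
Your proposal is correct and matches the paper's own argument exactly: the paper derives Theorem~\ref{thm:mbfirst} precisely by combining Proposition~\ref{prop:circlelaxvertexlattice} (Gaussian circles $\leftrightarrow$ lax lattices) with Proposition~\ref{prop:vertexlattices} (each lax lattice appears at exactly one vertex), with the well-definedness supplied by Proposition~\ref{prop:laxwelldefined}. Your remark that the component-by-component matching with individual packings is deferred to the full Main Bijection is also consistent with the paper's structure.
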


What remains is to show that their arrangement reflects the arrangement of Gaussian circles into Descartes quadruples and Apollonian circle packings.  To be precise, we will show that adjacency in the kingdom exactly corresponds to tangency \emph{within a Descartes quadruple} (in other words, not all tangent circles are adjacent vertices).  To reach this destination, however, we must first take a detour to study Hermitian forms.

\section{Hermitian Forms}
\label{sec:herm}

Conway defined the topograph to study the values of quadratic forms.  He labelled each region with a lax vector, and then fed those vectors into a quadratic form, to obtain a labelling of the topograph regions with the values of the form.

In our case, the corresponding objects are the vertices of the Apollonian kingdom, corresponding to circles and to lax lattices.  Instead of a quadratic form, we will evaluate the imaginary part of a Hermitian form at each of these.  These values satisfy a `linear Descartes-like relation' like \eqref{eqn:desclinear} for curvatures.  In fact, the values obtained, for a special choice of form, \emph{are} the curvatures of the circles.

\subsection{Labelling the Apollonian kingdom with a Hermitian form}
\label{sec:hermitian}

A \emph{Hermitian form} is a pairing
\[
h: \CC^n \times \CC^n \rightarrow \CC
\]
satisfying for any $u, v, w \in \CC^n$ and $a \in \CC$,
\begin{enumerate}
\item $h(au + v, w) = ah(u,w) + h(v,w)$
\item $h(u, av +w) = \overline{a}h(u,v) + h(u,w)$
\item $h(u,v) = \overline{h(v,u)}$
\end{enumerate}
(Note that the first and third entail the second of these conditions.)  In our case, $n=2$.  We will study $H$, the imaginary part of $h$ (i.e.\ the coefficient of $i$).  Then $H$ has the following important property:
\begin{equation}
        \label{eqn:gamma} H(au+bv, cu+dv) = (ad-bc)H(u,v) \mbox{ for }a, b, c, d \in \RR. \tag{H1}
\end{equation}
(Note: since $ih(u,v) = h(u, -iv)$, we may have equivalently chosen to study the collection of real parts of $h$; in fact, $h$ is determined by $H$.)

From now on, we will assume $h$ is \emph{integer valued} on $\ZZ[i]^2 \times \ZZ[i]^2$, i.e.\ it takes values in $\ZZ[i]$.  

We can evaluate $H$ at a vertex of the Apollonian kingdom by applying $H$ to a basis of the lax lattice.  Properties \eqref{eqn:gamma} and conjugate linearity together guarantee that the $H$-value is well-defined.  


%

\begin{lemma}\label{lemma:quantitiesintegers}
If $u + v + w = 0$, then the following quantity is real:
\[
        h(w-iv,iu-v) + h(w+iv,iu + v) - h(iv,v) - h(iw,w) - h(iu,u)
\]
\end{lemma}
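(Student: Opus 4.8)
The plan is to prove that the quantity in question, call it $Q$, is real by showing its imaginary part vanishes. Since $H=\Im h$, taking imaginary parts termwise gives
\[
\Im Q = H(w-iv,iu-v) + H(w+iv,iu+v) - H(iv,v) - H(iw,w) - H(iu,u),
\]
so everything reduces to proving that this expression, built only from the alternating $\RR$-bilinear form $H$, equals $0$.

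I would first record the three properties of $H$ that the computation needs, the first two being immediate and the third being the crux. \textbf{(i)} $H$ is $\RR$-bilinear (additive in each slot and homogeneous under real scalars), which is clear from the axioms defining $h$ and from \eqref{eqn:gamma}. \textbf{(ii)} $H$ is alternating: $H(x,y)=-H(y,x)$, hence $H(x,x)=0$, since $h(x,y)=\overline{h(y,x)}$ forces $\Im h(x,y) = -\Im h(y,x)$. \textbf{(iii)} the symmetry $H(x,iy)=H(y,ix)$ for all $x,y$. Property (iii) is the only place one genuinely uses that $H$ is the imaginary part of a Hermitian form rather than an arbitrary alternating $\RR$-bilinear form; it follows from the one-line computation $H(x,iy) = \Im\big(\overline{i}\,h(x,y)\big) = -\Re h(x,y)$, together with $\Re h(x,y) = \Re\overline{h(y,x)} = \Re h(y,x)$, so that $H(x,iy)$ and $H(y,ix)$ both equal $-\Re h(x,y)$. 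I expect isolating and proving (iii) to be the one non-mechanical step; I would present it carefully and treat the rest tersely.

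The remainder is bookkeeping with $u+v+w=0$. Expanding the first two $H$-terms by bilinearity, the cross terms $\pm H(w,v)$ and $\pm H(iv,iu)$ cancel between them, leaving $H(w-iv,iu-v)+H(w+iv,iu+v) = 2H(w,iu)+2H(iv,v)$, so $\Im Q = 2H(w,iu) + H(iv,v) - H(iw,w) - H(iu,u)$. Now substitute $w=-u-v$: by (i), $H(w,iu) = -H(u,iu)-H(v,iu)$ and $H(iw,w) = H(iu+iv,u+v) = H(iu,u)+H(iu,v)+H(iv,u)+H(iv,v)$. The two occurrences of $H(iv,v)$ cancel, and by (ii) the pair $-2H(u,iu)-2H(iu,u)$ vanishes, reducing $\Im Q$ to $-2H(v,iu) - H(iu,v) - H(iv,u)$. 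Applying (ii) once more gives $H(iu,v) = -H(v,iu)$ and $H(iv,u) = -H(u,iv)$, whence $\Im Q = -H(v,iu) + H(u,iv)$, and this is $0$ by (iii). Therefore $Q=\overline{Q}$, i.e.\ $Q$ is real.
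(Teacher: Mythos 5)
Your proof is correct, and it is essentially the paper's own argument: the paper also proceeds by direct calculation, using the same basic identities (Hermitian symmetry, $\RR$-bilinearity of $\Im h$, and the $i$-twisting relations such as $h(ix,y)=-h(x,iy)$, which is your property (iii) in disguise). You have simply written out in full the computation the paper leaves terse, organizing it as $\Im Q=0$ rather than as a grouping of terms of $h$ into manifestly real combinations.
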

\begin{proof}
The proof is by direct calculation.  It is convenient to note that
for a Hermitian form, the following quantities are real:
\[
        h(u,u),\quad
h(u,v) + h(v,u), \quad
        i( h(u,v) - h(v,u) ).
\]
Note also that $h(ix,iy) = h(x,y)$ and that $h(ix, y) = -h(x, iy)$ for all $x, y$.
\end{proof}

As a consequence, we discover that the $H$-labels on the pair of standard ultrabase tetrahedra have the following relationship:
        \begin{equation}
                \label{eqn:Hdesc}
                H(w-iv,iu-v) + H(w+iv,iu+v) = 2( H(u,iv) + H(v,iw) + H(w,iu) ) \tag{H2}
        \end{equation}

        This proves Theorem \ref{thm:herm}:  property \eqref{eqn:Hdesc} can be considered the linear Descartes rule for Hermitian forms.
%
%
%
%
By Proposition \ref{prop:vertexlattices}, the collection of Hermitian values
\[
        \{ H(u,v) : u,v \mbox{ form a $\ZZ[i]$-basis for $\ZZ[i]^2$} \}
\]
is now displayed on the vertices of the Apollonian kingdom.

\subsection{The curvature and centre of a circle in $\PP^1(\CC)$}
\label{sec:curvature}

We will now see that the fundamental parameters of a circle $M(\RR)$ -- its curvature and centre -- are determined by Hermitian forms on the columns of $M$.  Bythe \emph{curvature} of an oriented circle, we mean a quantity that is positive if the circle is oriented clockwise, negative if oriented counterclockwise, and is equal to the usual curvature (inverse of the radius) in absolute value.  The curvature of a line is zero.

We will also define the \emph{co-curvature} of an oriented circle $C$ as the oriented curvature of the circle $C'$ obtained by inversion in the unit circle (computed by $z \mapsto 1/\overline{z}$).  The \emph{half-co-curvature} is half of the co-curvature.

\begin{proposition}
\label{prop:curvature}
Consider an oriented Gaussian circle expressed as the image of $\RR$ (oriented to the right) under a transformation of the form
\[
M =
      \begin{pmatrix} \alpha & \gamma \\ \beta & \delta \end{pmatrix}, \quad \alpha, \beta, \gamma, \delta \in \ZZ[i], \;\; \alpha\delta - \beta\gamma \in \ZZ[i]^*.
\]
The curvature of the circle is an even integer given by
\[
        2 \operatorname{Im}\left( \beta \overline{\delta} \right),
\]
the co-curvature of the circle is an even integer given by
\[
        2 \operatorname{Im}\left( \alpha \overline{\gamma} \right),
\]
and the product of the curvature and centre of the circle (called the \emph{curvature-centre}) is a Gaussian integer given by
\[
        i(\gamma \overline{\beta} - \alpha \overline{\delta}) =
        \operatorname{Im}\left( \beta \overline{\gamma}  +\alpha\overline{\delta}  \right) 
        + i
        \operatorname{Im}\left(  i\beta \overline{\gamma} - i \alpha\overline{\delta}  \right).
\]
Furthermore, this value has even integer real part and odd integer imaginary part whenever $M \in \PSL_2(\ZZ[i])$; it has odd integer real part and even integer imaginary part whenever $M \in \PGL_2(\ZZ[i]) \setminus \PSL_2(\ZZ[i])$.

Finally, an oriented circle of curvature $b$, co-curvature $b'$ and curvature-centre $z$ satisfies the relation
\begin{equation}
        \label{eqn:cocurv}
        b'b = z\overline{z} - 1.
\end{equation}

\end{proposition}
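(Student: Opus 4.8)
\emph{Approach.} The plan is to realise every circle in $\CC_\infty$ as the zero locus in $\PP^1(\CC)$ of a Hermitian form and to track that form through $M$; then the curvature, co\nobreakdash-curvature and curvature--centre are just the three entries of the resulting Gram matrix, and the relation \eqref{eqn:cocurv} becomes the statement that transport preserves the determinant of the Gram matrix.

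\emph{Step 1: the dictionary between oriented circles and Hermitian matrices.} An oriented circle in $\CC_\infty$ is the vanishing locus $\{[w_1:w_2] : \mathbf w^{*}\mathcal H\mathbf w = 0\}$ of a Hermitian form with Gram matrix $\mathcal H = \begin{pmatrix} A & B \\ \bar B & C\end{pmatrix}$, $A,C\in\RR$, normalised by $\det\mathcal H = AC - |B|^{2} = -1$, with the interior (the region to the right) being the locus where the form is negative. Dehomogenising at $w=w_1/w_2$ this reads $A|w|^{2}+\bar B w + B\bar w + C = 0$, which, using $|B|^{2}-AC=1$, is the circle of centre $-B/A$ and radius $1/|A|$ when $A\neq 0$; matching with the sign conventions of the paper one checks that the signed curvature is exactly $A$ and the curvature--centre is $-B$. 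Inversion $z\mapsto 1/\bar z$ acts on $\PP^{1}(\CC)$ by $[w_1:w_2]\mapsto[\bar w_2:\bar w_1]$, hence carries $\mathcal H$ to the matrix with its two diagonal entries interchanged (and its negative locus carried along), so the co\nobreakdash-curvature is the $(2,2)$\nobreakdash-entry $C$. Finally, $\RR$ oriented to the right is cut out by the Hermitian form with Gram matrix $\mathcal H_0 = \begin{pmatrix}0 & i \\ -i & 0\end{pmatrix}$, whose negative locus $2\operatorname{Im}w<0$ is the lower half\nobreakdash-plane (the interior of the rightward real line), and $\det\mathcal H_0 = -1$.

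\emph{Step 2: transport through $M$ and read off.} Since $M(\RR)$ is the zero locus of $\mathbf w\mapsto(M^{-1}\mathbf w)^{*}\mathcal H_0(M^{-1}\mathbf w)$ and $M^{-1}=(\det M)^{-1}\operatorname{adj}(M)$ with $|\det M|=1$, the (normalised) Gram matrix of the oriented circle $M(\RR)$ is, up to the positive scalar $|\det M|^{-2}=1$,
\[
\mathcal H \;=\; \operatorname{adj}(M)^{*}\,\mathcal H_0\,\operatorname{adj}(M)
\;=\;\begin{pmatrix}\bar\delta & -\bar\beta \\ -\bar\gamma & \bar\alpha\end{pmatrix}\begin{pmatrix}0 & i \\ -i & 0\end{pmatrix}\begin{pmatrix}\delta & -\gamma \\ -\beta & \alpha\end{pmatrix}\;=\;\begin{pmatrix} 2\operatorname{Im}(\beta\bar\delta) & i(\alpha\bar\delta-\gamma\bar\beta) \\ i(\bar\gamma\beta-\bar\alpha\delta) & 2\operatorname{Im}(\alpha\bar\gamma)\end{pmatrix},
\]
(the middle equality because M\"obius maps preserve ``interior to the right'', the last a two-line multiplication). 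Reading the entries through Step~1 gives curvature $b=2\operatorname{Im}(\beta\bar\delta)$, co\nobreakdash-curvature $b'=2\operatorname{Im}(\alpha\bar\gamma)$, both in $2\ZZ$, and curvature--centre $z=-B=i(\gamma\bar\beta-\alpha\bar\delta)\in\ZZ[i]$; the two real/imaginary-part formulas for $z$ follow by separating $i(\gamma\bar\beta-\alpha\bar\delta)$ into real and imaginary parts, using $\operatorname{Re}(ix)=-\operatorname{Im}x$ and $\beta\bar\gamma=\overline{\gamma\bar\beta}$. All of these are independent of the chosen representative matrix, since scaling $M$ by a unit multiplies each of $\beta\bar\delta,\alpha\bar\gamma,\gamma\bar\beta,\alpha\bar\delta$ by $1$. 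The relation \eqref{eqn:cocurv} is then automatic:
\[
b b' - z\bar z \;=\; \det\mathcal H \;=\; |\det\operatorname{adj}(M)|^{2}\,\det\mathcal H_0 \;=\; |\det M|^{2}\cdot(-1)\;=\;-1 .
\]

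\emph{Step 3: parity.} The key observation is that for any $x,y\in\ZZ[i]$ one has $x\bar y\equiv xy\pmod 2$, since $x(\bar y-y)=-2i\,x\operatorname{Im}(y)\in 2\ZZ[i]$. Hence $z=-i(\alpha\bar\delta-\gamma\bar\beta)\equiv -i(\alpha\delta-\gamma\beta)=-i\det M\pmod 2$. A representative of an element of $\PSL_2(\ZZ[i])$ has determinant $\pm1$, so $\det M\equiv1$ and $z\equiv -i\equiv i\pmod 2$: $\operatorname{Re}z$ even, $\operatorname{Im}z$ odd. A representative of an element of $\PGL_2(\ZZ[i])\setminus\PSL_2(\ZZ[i])$ has determinant $\pm i$, so $\det M\equiv i$ and $z\equiv -i\cdot i=1\pmod 2$: $\operatorname{Re}z$ odd, $\operatorname{Im}z$ even.

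\emph{Main obstacle.} The only step requiring genuine care is the orientation bookkeeping of Step~1: that the negative locus of the transported form really is the interior of $M(\RR_{\text{right}})$ (so that the signed curvature comes out as $+A$, not $-A$), which rests on M\"obius maps preserving left and right; and that the co\nobreakdash-curvature is the $(2,2)$\nobreakdash-entry and not its negative, which amounts to orienting the inverted circle so that its interior is the image of the interior of the original — precisely the orientation produced by the action $[w_1:w_2]\mapsto[\bar w_2:\bar w_1]$ of $z\mapsto1/\bar z$. Everything else reduces to the displayed $2\times2$ multiplication, the identity $\det\mathcal H=\det\mathcal H_0$, and the congruence $x\bar y\equiv xy\pmod 2$.
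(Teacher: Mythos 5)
Your proposal is correct, and it reaches the conclusion by a genuinely different route than the paper. The paper's proof introduces the four coordinate functions $H_1,\dots,H_4$ on matrices, shows each is invariant under $M\mapsto MP$ for $P\in\PSL_2(\RR)$ using property \eqref{eqn:gamma} of imaginary parts of Hermitian forms, and then verifies the formulas by exhibiting an explicit normal-form matrix $M'$ carrying $\RR$ to the clockwise circle of centre $b$ and radius $k$ (a Cayley-type map composed with a dilation and a translation), checking $H_1(M')=1/k$ and $H_3(M')+iH_4(M')=b/k$, treating the counterclockwise case by an explicit sign flip, the co-curvature via $z\mapsto 1/z$, and the relation \eqref{eqn:cocurv} via $N(\alpha\delta-\beta\gamma)=1$. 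You instead encode the oriented circle as the determinant-$(-1)$ Gram matrix of its defining Hermitian equation and transport $\mathcal H_0$ through $\operatorname{adj}(M)$; a single $2\times 2$ congruence then delivers curvature, co-curvature and curvature--centre simultaneously, with no normal form or orientation case split, and \eqref{eqn:cocurv} falls out of multiplicativity of determinants rather than a separate norm computation. This is essentially the spinor-map picture ($H\mapsto MH\overline{M}^T$) that the paper only introduces later, in Section \ref{sec:lorentz}, front-loaded to prove the proposition; your parity argument is the same congruence as the paper's \eqref{eqn:detbar}, just phrased as $x\bar y\equiv xy\pmod 2$. What each approach buys: yours is more uniform and self-checking (lines and both orientations are handled at once, and the determinant identity is free), while the paper's is more elementary and geometric, in effect proving the circle--Hermitian-matrix dictionary that you assume in Step 1. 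The one point of genuine care --- that the negative locus of the transported form is the interior of the oriented image circle, which pins down the overall sign --- you correctly identify and justify by the fact that M\"obius maps preserve handedness; the paper's proof carries the analogous burden in fixing the orientation of its normal form $M'$.
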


\begin{proof}[Proof of Proposition \ref{prop:curvature}]
        Define the following functions on matrices $M \in \PGL_2(\CC)$:
        \begin{align*}
                H_1 (M) &= 2 \operatorname{Im}( \beta\overline{\delta} ) / |\det(M)|^{1/2}, \\
                H_2 (M) &= 2 \operatorname{Im}( \alpha\overline{\gamma} ) / |\det(M)|^{1/2}, \\
                H_3 (M) &= \operatorname{Im}\left(  \beta \overline{\gamma} + \alpha\overline{\delta}   \right) / |\det(M)|^{1/2}, \\
                H_4 (M) &= \operatorname{Im}\left( i\beta \overline{\gamma}  - i\alpha\overline{\delta}  \right) / |\det(M)|^{1/2}. 
        \end{align*}
        (Exponent $1/2$ denotes the positive square root.)  We will show that the image of the real line under an $M \in \PGL_2(\CC)$ (interpreted as a M\"obius transformation) has curvature $H_1(M)$, co-curvature $H_2(M)$ and curvature-centre $H_3(M) + iH_4(M)$.

The key observation is that each $H_i$ is invariant under replacing such an $M$ by any other transformation $M'=MP$ for some $P \in \PSL_2(\RR)$.  To see this, we choose representative matrices of unit determinant, so that each $H_i$ is the imaginary part of a Hermitian form on the column vectors of the matrix $M$.   Then property \eqref{eqn:gamma} of the imaginary part of a Hermitian form in Section \ref{sec:hermitian} suffices.

Since $M, M' \in \PGL_2(\CC)$ map $\RR$ to the same oriented circle if and only if $M' = MP$ for some $P \in \PSL_2(\RR)$, it suffices to prove the statement for any $M'$ taking $\RR$ to the oriented circle in question.

Every clockwise oriented circle can be expressed as an image of $\RR$ under the transformation
\[
        M' = \begin{pmatrix} \frac{k+b}{\sqrt{2k}} & i\frac{k-b}{\sqrt{2k}} \\ \frac{1}{\sqrt{2k}} & \frac{-i}{\sqrt{2k}} \end{pmatrix}, \quad b \in \CC, k \in \RR^{>0},
\]
since this is a composition of the transformation $\begin{pmatrix} 1/\sqrt{2} & i/\sqrt{2} \\ 1/\sqrt{2} & -i/\sqrt{2} \end{pmatrix}$ taking $\RR$ (oriented to the right) to the clockwise unit circle, with a dilation $\begin{pmatrix} \sqrt{k} & 0 \\ 0 & 1/\sqrt{k} \end{pmatrix}$ and a translation $\begin{pmatrix} 1 & b \\ 0 & 1 \end{pmatrix}$.   The image of the real axis under this $M'$ is the clockwise circle with centre $b$ and radius $k$, and $M'$ satisfies $H_1(M') = 1/k$, and $H_3(M') + iH_4(M') = b/k$.  

Inversion in the unit circle is accomplished by the map $z \mapsto \frac{1}{\overline{z}}$.  Conjugation preserves radii, but reverses orientation. Therefore the map $z \mapsto \frac{1}{z}$ takes a circle with co-curvature $b'$ to a circle with curvature $b'$.  The curvature of the image of $M'$ under the M\"obius transformation $z \mapsto \frac{1}{z}$ is $H_2(M')$.

If we change the orientation of the circle, replacing $M$ with $M \begin{pmatrix} -1 & 0 \\ 0 & 1 \end{pmatrix}$, then the values of the $H_i$ change sign.
%

Finally, return to the restrictions that $\det(M) \in \ZZ[i]^*$, $\alpha, \beta, \gamma, \delta \in \ZZ[i]$.  We have
\begin{equation}
        \label{eqn:detbar}
        \alpha \delta - \gamma\beta - (\alpha\overline{\delta} - \gamma \overline{\beta}) = \alpha( \delta - \overline{\delta}) - \gamma ( \beta - \overline{\beta}) \equiv 0 \pmod 2
\end{equation}
Since $\det(M) = \alpha \delta - \gamma\beta$ is $\pm 1$ or $\pm i$, this means the real and imaginary parts of the curvature-centre $i(\gamma \overline{\beta} - \alpha \overline{\delta})$ consist of one even, and one odd, integer.  In particular, if $M$ is in $\PSL_2(\ZZ[i])$, then $\det(M) = \pm 1$, so the real part is even, and the imaginary part is odd.  If $M$ is in the nontrivial coset of $\PSL_2(\ZZ[i])$ in $\PGL_2(\ZZ[i])$, then the opposite is true.

To verify relation \eqref{eqn:cocurv}, note that $1=N(\alpha\delta-\beta\gamma)$.

\end{proof}

\section{Packing circles in spacetime}
\label{sec:kocik}

In order to finish the proof of the Main Bijection, we will need to make use of a criterion specifying when four Gaussian circles are in Descartes configuration.  Such a criterion, in the form of an extended Descartes relation involving not just curvatures, but also curvature-centres, was given in \cite{\ntone}.  Kocik gave the following particularly compelling proof in \cite{kocikminkowski}, based on observations of Pedoe.  This section reviews Kocik's work. 



Let $\MM$ be the vector space $\RR^4$ with an inner product $\langle, \rangle$ given by
\[
        \mathbf{M}=  
        \begin{pmatrix}
                0 & -\frac{1}{2} & 0 & 0 \\
      -\frac{1}{2} & 0 & 0 & 0 \\
                0 & 0 & 1 & 0 \\
                0 & 0 & 0 & 1 
        \end{pmatrix}.
\]
This is called \emph{Minkowski spacetime}.  We are using the standard isotropic basis, instead of the standard orthonormal basis that is usually introduced with special relativity.  A vector represents a position in three dimensions of space and one of time; in this basis time is measured along the vector $\mathbf{t} = (1,1,0,0)^T$.  A vector points forward in time if its projection onto $\mathbf{t}$ points in the same direction as $\mathbf{t}$.  Vectors of norm $0$ are called \emph{light-like} and form the \emph{light cone}, dividing space into two regions: \emph{time-like} vectors of negative norm; and \emph{space-like} vectors of positive norm.


The fundamental observation is that oriented circles in $\CC_\infty$ map to the unit hyperoloid $\mathbb{H}$ (consisting of vectors of norm $1$ in Minkowski space) in such a way that their inner product describes the manner in which they intersect.  This map, $\pi$, is called the \emph{Pedoe map}.  The image of a circle $C$ of curvature-centre $z$, curvature $b$ and co-curvature $b'$ is
\[
        \pi(C) = 
        \left( b, b', \Re(z), \Im(z) \right)^T,
\]
which has norm $1$ by \eqref{eqn:cocurv} of Proposition \ref{prop:curvature}.  This is a variation on the \emph{augmented curvature-center coordinates} defined in \cite[Definition 3.2]{\beyond}.  For a circle of the form $M(\RR)$ for some $M \in \PGL_2(\ZZ[i])$, Proposition \ref{prop:curvature} gives an alternate form,
\[
        \pi(C) = ( H_1(M), H_2(M), H_3(M), H_4(M) )^T.
\]

Straight lines map into the plane $b=0$, and $\pi(C)$ rotates from this plane toward the light cone as curvatures increase or decrease to $\pm \infty$.  Note that \emph{being positively oriented as a circle is the same as pointing forward in time}.  If we orient $C$ oppositely, calling the result $-C$, then $\pi(-C) =-\pi(C)$.  

\begin{figure}
        \centering
        \includegraphics[width=1.9in]{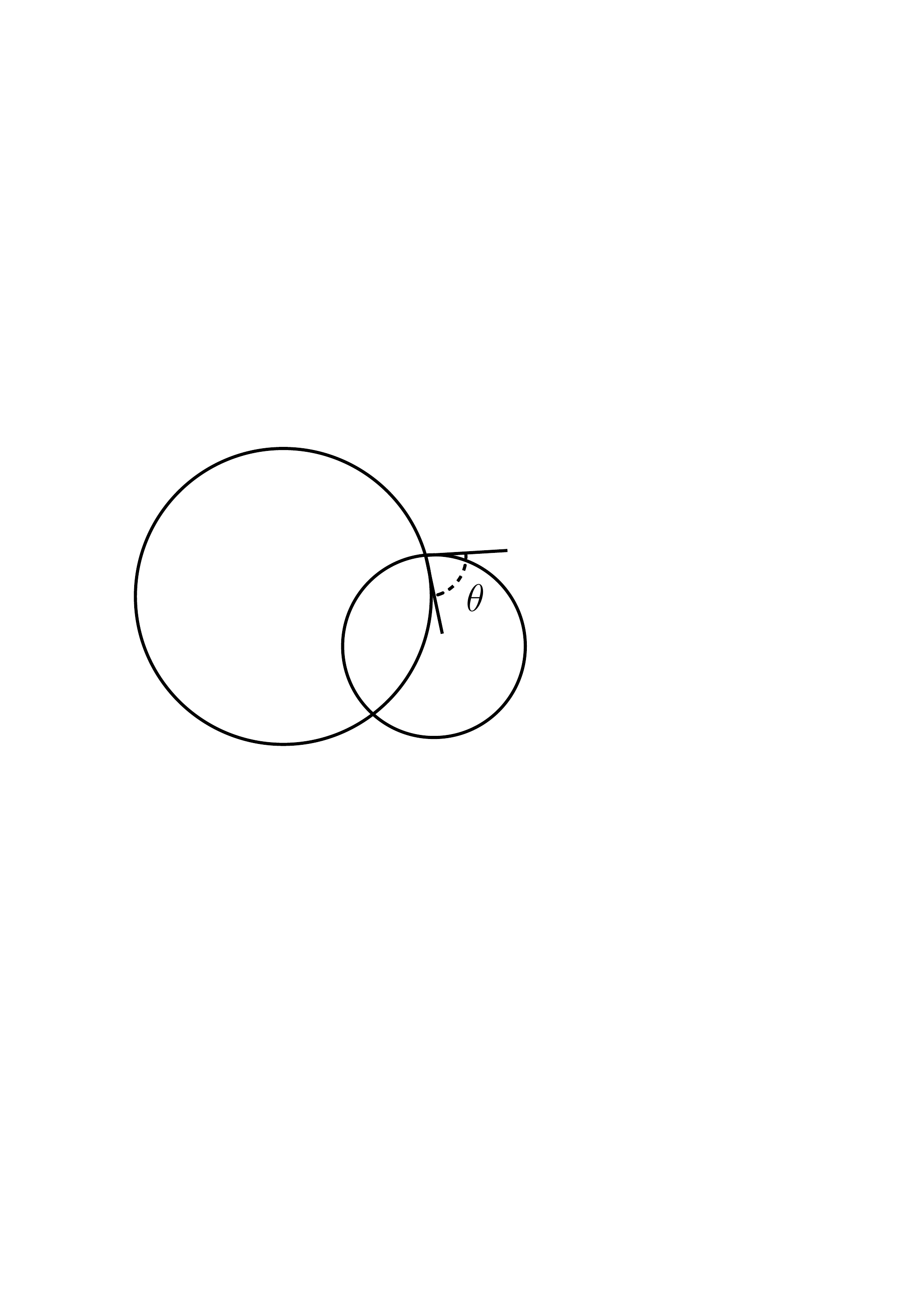}
        \caption{The angle between two non-disjoint circles.}
        \label{fig:anglecircles}
\end{figure}

Kocik credits the following fundamental observation to Pedoe \cite{MR1017034}, albeit not originally given in the language of space-time:

\begin{proposition}[Proposition 2.4 of \cite{kocikminkowski}]
        Let $v_i = \pi(C_i)$ for two circles $C_1, C_2$ which are not disjoint.  Then $\langle v_1, v_2 \rangle = \cos \theta$, where $\theta$ is the angle between the two circles as in Figure \ref{fig:anglecircles}.  In particular,
                \begin{enumerate}
                        \item $\langle v_1, v_2 \rangle = -1$ if and only if the circles are tangent externally \includegraphics[width=0.5in]{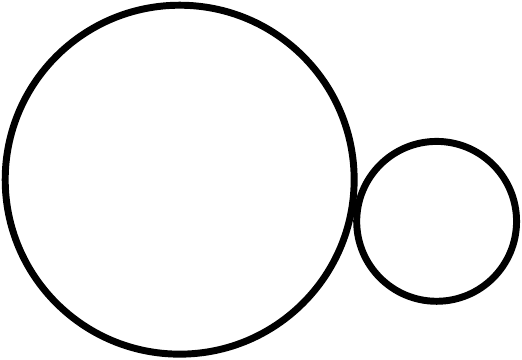}
                        \item $\langle v_1, v_2 \rangle = 1$ if and only if the circles are tangent internally \includegraphics[width=0.38in]{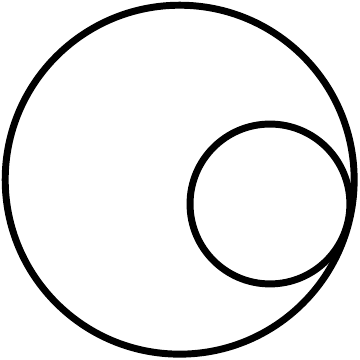}
                        \item $\langle v_1, v_2 \rangle = 0$ if and only if the circles are mutually orthogonal \includegraphics[width=0.47in]{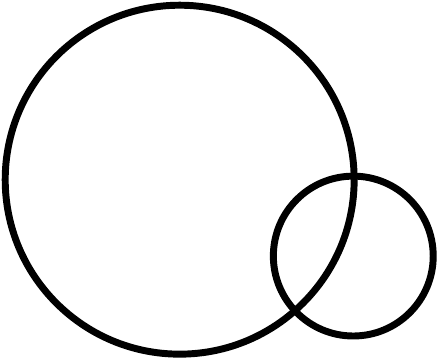}
                \end{enumerate}
\end{proposition}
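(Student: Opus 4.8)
The plan is to expand $\langle\pi(C_1),\pi(C_2)\rangle$ directly in terms of curvatures, co-curvatures and centres, collapse it to a symmetric expression in the two radii and the distance between centres, and then read off $\cos\theta$ from the law of cosines, with the three itemised statements emerging as extremal cases. Concretely, the definitions of $\pi$ and of $\mathbf M$ give, for circles $C_i$ of curvature $b_i$, co-curvature $b_i'$ and curvature-centre $z_i$,
\[
  \langle \pi(C_1),\pi(C_2)\rangle \;=\; -\tfrac12\bigl(b_1 b_2' + b_1' b_2\bigr) + \Re\bigl(z_1\overline{z_2}\bigr).
\]
For two honest circles (not lines), oriented clockwise, with radii $r_i>0$ and centres $p_i$, we have $b_i = 1/r_i$, $z_i = b_i p_i$, and $b_i' b_i = z_i\overline{z_i} - 1$ by \eqref{eqn:cocurv}; substituting $b_i' = b_i|p_i|^2 - 1/b_i$ and simplifying collapses the right-hand side to $\bigl(r_1^2 + r_2^2 - |p_1-p_2|^2\bigr)/(2r_1 r_2)$. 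If $C_1$ and $C_2$ meet at a point $P$, the triangle with vertices $p_1,p_2,P$ has side lengths $r_1$, $r_2$, $|p_1-p_2|$, so the law of cosines identifies this quantity with $\cos\theta$ for $\theta$ the angle of that triangle at $P$ --- which, under the clockwise convention (interiors are the bounded disks), is exactly the angle between the oriented circles of Figure~\ref{fig:anglecircles}. External tangency then forces $|p_1-p_2| = r_1+r_2$, whence $\cos\theta = -1$; internal tangency forces $|p_1-p_2| = |r_1-r_2|$, whence $\cos\theta = 1$; and mutual orthogonality means the radii at $P$ are perpendicular, i.e.\ $\theta = \pi/2$.

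It remains to treat circles through $\infty$, i.e.\ lines. A one-line computation with $M = \left(\begin{smallmatrix} e^{i\phi/2} & 0 \\ 0 & e^{-i\phi/2}\end{smallmatrix}\right)$ and Proposition~\ref{prop:curvature} gives $\pi(e^{i\phi}\RR) = (0,0,\sin\phi,-\cos\phi)^T$, so $\langle \pi(e^{i\phi_1}\RR),\pi(e^{i\phi_2}\RR)\rangle = \cos(\phi_1-\phi_2)$, the cosine of the angle between the two lines; the line--circle case comes out to $\pm\operatorname{dist}(p,\ell)/r$, again the cosine of the intersection angle. (More conceptually --- and closer to Kocik's own argument --- the map $\pi$ intertwines the M\"obius action on oriented circles with the Lorentz action on $\MM$, via the classical isomorphism $\PSL_2(\CC)\cong\SOR$ underlying Section~\ref{sec:lorentz}; hence $\langle\pi(C_1),\pi(C_2)\rangle$ is a conformal invariant, and sending a point common to $C_1$ and $C_2$ to $\infty$ reduces every case at once to a pair of lines.)

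The one delicate point --- and the real obstacle --- is the bookkeeping of orientations and signs: reversing the orientation of a circle negates $\pi(C)$, so the equality $\langle\pi(C_1),\pi(C_2)\rangle = \cos\theta$ is literally correct only once one has fixed the convention that $\theta$ is measured between the circles \emph{with their given orientations} (so that, for example, two externally tangent circles of disjoint interiors are deemed to meet at ``angle $\pi$''), and has settled which of $\theta$ or $\pi-\theta$ appears as the angle in the triangle $p_1Pp_2$. Once the conventions are fixed, each assertion above is a short computation.
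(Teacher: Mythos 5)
Your argument is correct, but there is nothing in the paper to compare it against: the paper states this result as Proposition 2.4 of Kocik's article (crediting the underlying observation to Pedoe) and gives no proof of its own, so your write-up is a genuinely self-contained verification of an imported fact. The computation checks out: with $\mathbf M$ as defined, $\langle\pi(C_1),\pi(C_2)\rangle = -\tfrac12(b_1b_2'+b_1'b_2)+\Re(z_1\overline{z_2})$, and substituting $b_i'=b_i|p_i|^2-1/b_i$ from \eqref{eqn:cocurv} does collapse this to $(\rho_1^2+\rho_2^2-|p_1-p_2|^2)/(2\rho_1\rho_2)$, which the law of cosines identifies with the cosine of the angle at the common point in the triangle $p_1Pp_2$; the line computation $\pi(e^{i\phi}\RR)=(0,0,\sin\phi,-\cos\phi)^T$ is also right (via $H_3+iH_4 = i(\gamma\overline\beta-\alpha\overline\delta)=-ie^{i\phi}$). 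You are also right that the only delicate issue is orientation: since $\pi(-C)=-\pi(C)$, the clean identity $\langle v_1,v_2\rangle=\cos\theta$ holds only once $\theta$ is declared to be the angle between the \emph{oriented} circles (equivalently, the angle between the radii at $P$ under the paper's sign convention, which it explicitly notes differs from Kocik's), and your extremal cases $|p_1-p_2|=\rho_1+\rho_2$, $|p_1-p_2|=|\rho_1-\rho_2|$, $\theta=\pi/2$ then give items (1)--(3). Your parenthetical alternative --- using the conformal invariance of $\langle\pi(C_1),\pi(C_2)\rangle$ under the $\PGL_2(\CC)\to\SOR$ correspondence to send the common point to $\infty$ and reduce to two lines --- is the slicker route and is closer in spirit to how the paper later exploits the spinor map in Section \ref{sec:lorentz}, but either version suffices.
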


(We have used a different sign convention than in \cite{kocikminkowski}.)   The following statement now holds simply by definition.

\begin{theorem}[Theorem 3.1 of \cite{kocikminkowski}]
        \label{thm:gtone}
        Let $C_i$ be a quadruple of circles.  Define $\mathbf{R}$ to be the matrix with entries $R_{ij} = \langle \pi(C_i), \pi(C_j) \rangle$.  Define $\mathbf{A}$ to be the matrix whose columns are $\pi(C_i)$.  Then
        \begin{equation}
                \label{eqn:kocik1}
        \mathbf{R} = \mathbf{A}^T \mathbf{M} \mathbf{A}.
        \end{equation}
\end{theorem}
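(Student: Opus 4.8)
The plan is to recognise that this identity is purely a restatement of the definition of the inner product in matrix language, so the proof amounts to unwinding notation carefully. First I would recall that, by the way $\MM$ was introduced, the pairing $\langle\,,\,\rangle$ on $\RR^4$ is exactly the bilinear form whose Gram matrix in the chosen isotropic basis is $\mathbf{M}$; that is, for column vectors $v, w \in \RR^4$ one has $\langle v, w\rangle = v^T \mathbf{M} w$. The symmetry of $\mathbf{M}$ records the symmetry of the pairing, and hence the symmetry of $\mathbf{R}$.

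Next I would expand the right-hand side entry by entry. Write $a_i = \pi(C_i)$ for the $i$-th column of $\mathbf{A}$; then the $i$-th row of $\mathbf{A}^T$ is $a_i^T$, so the $(i,j)$ entry of $\mathbf{A}^T \mathbf{M} \mathbf{A}$ is $a_i^T \mathbf{M} a_j = \langle \pi(C_i), \pi(C_j)\rangle$, which is precisely $R_{ij}$ by the definition of $\mathbf{R}$. As $i$ and $j$ range over all pairs, the two $4\times 4$ matrices agree, giving \eqref{eqn:kocik1}. One could alternatively verify this by brute force, substituting the explicit entries of $\mathbf{M}$ and the explicit coordinates $\pi(C)=(b,b',\Re z,\Im z)^T$, but this is unnecessary and less illuminating.

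As a consistency check I would note that the diagonal comes out correctly: $R_{ii} = \langle \pi(C_i), \pi(C_i)\rangle = -b b' + z\overline{z} = 1$ by relation \eqref{eqn:cocurv} of Proposition \ref{prop:curvature}, so each $\pi(C_i)$ indeed lies on the unit hyperboloid $\mathbb{H}$. There is no genuine obstacle in this step — all of the substantive content (the construction of the Pedoe map $\pi$, the interpretation of $\langle\pi(C_1),\pi(C_2)\rangle$ via Proposition 2.4 of \cite{kocikminkowski}, and the curvature/co-curvature/curvature-centre formulas of Proposition \ref{prop:curvature}) has already been established, and this theorem merely repackages the collection of scalar identities $R_{ij} = \langle a_i, a_j\rangle$ as a single matrix equation, which is why it holds essentially by definition.
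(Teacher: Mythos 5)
Your proof is correct and matches the paper's treatment: the paper itself remarks that the identity ``holds simply by definition,'' and your entry-by-entry expansion $(\mathbf{A}^T\mathbf{M}\mathbf{A})_{ij}=\pi(C_i)^T\mathbf{M}\,\pi(C_j)=R_{ij}$ is exactly that observation made explicit. The consistency check $R_{ii}=1$ via \eqref{eqn:cocurv} is a nice touch but not required.
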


The dramatic title of \emph{Theorem} is warranted, however, because it is a generalisation of the criterion of \cite[Theorem 3.3, $n=2$]{\beyond} for Descartes quadruples.  It tells us that four circles form an \emph{oriented Descartes quadruple} if and only if the equation holds with
\[
       \mathbf{R} = \begin{pmatrix}
                1 & -1 & -1 & -1 \\
                 -1 & 1 & -1 & -1 \\
                 -1 & -1 & 1 & -1 \\
                 -1 & -1 & -1 & 1 \end{pmatrix}.
\]
The term \emph{oriented Descartes quadruple} is defined in \cite{\beyond} to include not just the Descartes quadruples we have discussed thus far (called \emph{positively oriented}), but also their images under a map reversing circle orientation (called \emph{negatively oriented}).  A negatively oriented Descartes quadruple will no longer have disjoint interiors.  We will continue to use the term \emph{Descartes quadruple} to mean only positively oriented Descartes quadruples.

In this case, the matrices are all invertible, so that we may write instead 
       \begin{equation}
                \label{eqn:kocik}
                \mathbf{A} \mathbf{R}^{-1} \mathbf{A}^T = \mathbf{M}^{-1}.
        \end{equation}
        After a change of coordinates, this is the original form due to Lagarias, Mallows and Wilks in \cite[Theorem 3.3]{\beyond}.  Considering the top-left entry of this matrix equation gives the usual Descartes relation \eqref{eqn:descquad}.  Other entries give similar relations for the curvature-centres of a quadruple etc., some of which have their own monikers; see \cite{\beyond}.  

The power of Kocik's proof is that other choices of $\mathbf{R}$ reflect other arrangements of circles, an interesting journey we will not undertake here. 

\section{Lorentz transformations and the proof}
\label{sec:lorentz}

Look up to the heavens surrounding the earth.  In spacetime, the celestial sphere observed from the origin consists of the backward light-like vectors.  These form a three-dimensional cone in spacetime, but as a collection of rays (projectivizing), one obtains a sphere.  The Lorentz transformations are automorphisms of this sphere, and in fact, they are conformal mappings.  For this reason, an observer at relativistic speeds will see the night sky distorted by a M\"obius transformation.  This loosely explains why there exists an isomorphism described by Penrose as ``the first step of a powerful correspondence between the spacetime geometry of relativity and the holomorphic geometry of complex spaces,'' \cite[Chapter 2.6]{KevinBrown}
\[
\phi: \PGL_2(\CC) \rightarrow \SOR 
\]
of $\PGL_2(\CC)$ taking values in the proper orthochronous Lorentz group of matrices, denoted $\SOR$.  This group is defined as all matrices $N$ of determinant $1$ preserving the Minkowski norm and the orientation of time, i.e. $N$ such that
\[
        N^T \mathbf{M} N = \mathbf{M},
\]
and $N(\mathbf{t})$ points forward in time.

For more on this isomorphism, called the \emph{spinor map}, see \cite[Chapter 1.2]{MR776784}.  It restricts to an isomorphism
\[
       \phi: \PGL_2(\ZZ[i]) \rightarrow \SO.
\]
To give this representation explicitly (and see why the last statement is true), we write a circle $[b, b', r, m]$ as a Hermitian matrix
\[
      H =  \begin{pmatrix}
                b' & r+mi \\
             r-mi & b
        \end{pmatrix}
\]
and then $M \in \PGL_2(\CC)$ acts via
\[
        H \mapsto M H\overline{M}^T.
\]
\begin{lemma}
        The M\"obius action of $\PGL_2(\CC)$ on circles in $\CC_\infty$ agrees with the left action of $\SOR$ on $\mathbb{H}$.  
\end{lemma}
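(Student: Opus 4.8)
The plan is to route everything through the Hermitian‑matrix model of $\MM$ recalled just above, reducing the statement to a single explicit identity for the real line. Throughout, for an oriented circle $C$ with curvature $b$, co‑curvature $b'$ and curvature‑centre $z=r+mi$, write $H_C=\bigl(\begin{smallmatrix} b' & r+mi\\ r-mi & b\end{smallmatrix}\bigr)$ for its Hermitian matrix, so that $\pi(C)=(b,b',r,m)^T$. First I would record the dictionary: the assignment $(b,b',r,m)^T\mapsto\bigl(\begin{smallmatrix} b' & r+mi\\ r-mi & b\end{smallmatrix}\bigr)$ is an $\RR$‑linear isomorphism from $\MM=\RR^4$ onto the space of Hermitian $2\times2$ matrices, and a one‑line check shows it carries $\langle v,v\rangle$ to $-\det H(v)$; hence the unit hyperboloid $\mathbb{H}$ corresponds to $\{\det H=-1\}$, which is exactly relation \eqref{eqn:cocurv} of Proposition \ref{prop:curvature}. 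By the definition of the spinor map recalled above, the action of $\phi(M)\in\SOR$ on $\mathbb{H}$ becomes, under this identification, the map $H\mapsto MH\overline{M}^T$. So the lemma is equivalent to the single equivariance identity $H_{M\cdot C}=M\,H_C\,\overline{M}^T$ for every oriented circle $C$ and every $M\in\PGL_2(\CC)$.

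Next I would prove the key special case $C=\RR$. Plugging $M=I$ into Proposition \ref{prop:curvature} shows that $\RR$, oriented to the right, has curvature $0$, co‑curvature $0$ and curvature‑centre $-i$, so $H_\RR=\bigl(\begin{smallmatrix}0 & -i\\ i & 0\end{smallmatrix}\bigr)$. For an arbitrary $M'=\bigl(\begin{smallmatrix}\alpha & \gamma\\ \beta & \delta\end{smallmatrix}\bigr)\in\PGL_2(\CC)$, choose the representative with $|\det M'|=1$ (harmless in a projective group, and automatic in $\PGL_2(\ZZ[i])$) and multiply out $M'H_\RR\overline{M'}^T$: its diagonal entries come out to $2\Im(\alpha\overline\gamma)$ and $2\Im(\beta\overline\delta)$, and its $(1,2)$‑entry to $i(\gamma\overline\beta-\alpha\overline\delta)$ — which Proposition \ref{prop:curvature} identifies as precisely the co‑curvature, curvature and curvature‑centre of the circle $M'(\RR)$. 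Hence $H_{M'(\RR)}=M'H_\RR\overline{M'}^T$. (That the right‑hand side depends only on the oriented circle $M'(\RR)$ and not on the choice of $M'$ is property \eqref{eqn:gamma}, the same fact already exploited in the proof of Proposition \ref{prop:curvature}.)

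Finally I would assemble the pieces. Every oriented circle $C$ equals $M'(\RR)$ for some $M'\in\PGL_2(\CC)$, since three points determine a circle and the M\"obius group acts transitively on ordered triples; thus $H_C=M'H_\RR\overline{M'}^T$. For $M\in\PGL_2(\CC)$ we have $M\cdot C=(MM')(\RR)$, so applying the special case again and using that conjugate‑transpose is an anti‑homomorphism,
\[
  H_{M\cdot C}=(MM')\,H_\RR\,\overline{(MM')}^T=M\bigl(M'H_\RR\overline{M'}^T\bigr)\overline{M}^T=M\,H_C\,\overline{M}^T .
\]
Read back through the dictionary of the first paragraph, this is $\pi(M\cdot C)=\phi(M)\cdot\pi(C)$, which is the lemma; and since M\"obius maps send positively oriented circles to positively oriented circles, $\phi(M)$ really preserves the forward time direction, consistent with $\phi(\PGL_2(\CC))\subseteq\SOR$. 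There is no conceptual obstacle here — the only things to watch are bookkeeping points: the $|\det M|^{1/2}$ normalization built into $\pi$ must be tracked so that the scalar $|\det M|$ produced by $H\mapsto MH\overline{M}^T$ cancels, and the clockwise/counterclockwise orientation conventions must be matched against the ``forward in time'' convention of Section \ref{sec:kocik}.
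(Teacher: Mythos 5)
Your proof is correct and takes essentially the same route the paper intends: the paper's own proof is the one-line remark that the agreement is ``a straightforward computation'' using the functions $H_i$, and your argument is exactly that computation, carried out through the Hermitian-matrix model the paper sets up immediately before the lemma. Your organization --- verifying the base case $C=\RR$ (where $H_\RR=\bigl(\begin{smallmatrix}0 & -i\\ i & 0\end{smallmatrix}\bigr)$ reproduces the entries of Proposition \ref{prop:curvature}) and then propagating by the anti-homomorphism property, while tracking the $|\det M|=1$ normalization and identifying $\mathbb{H}$ with $\{\det H=-1\}$ via \eqref{eqn:cocurv} --- is an accurate and complete write-up of that computation.
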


\begin{proof}
        Using the functions $H_i$ of Section \ref{sec:curvature}, this is a straightforward computation.
\end{proof}

We have seen that $\PGL_2(\CC)$ preserves tangencies, so that it maps Descartes quadruples to Descartes quadruples.  Alternatively, we can see this from the perspective of $\SO$, because Theorem \ref{thm:gtone} implies that
\[
        \mathbf{A}^T N^T \mathbf{M} N \mathbf{A} =
        \mathbf{A}^T  \mathbf{M} \mathbf{A} = \mathbf{R}.
        \]

\begin{lemma}
        \label{lemma:mod2}
        If $z_1, z_2 \in \ZZ[i]$ are curvature-centres of two Gaussian circles which are tangent, then $z_1 \equiv z_2 \not\equiv 0 \pmod{1+i}$.
\end{lemma}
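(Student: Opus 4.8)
The plan is to reduce the statement to a congruence about the integer quantities produced by Proposition \ref{prop:curvature} and then exploit the Descartes-quadruple criterion from Theorem \ref{thm:gtone}. First I would record what the lemma is really claiming: reduction modulo the prime $1+i$ collapses $\ZZ[i]$ onto $\FF_2$, and under this reduction $a+bi \mapsto a+b \pmod 2$. So the two assertions are that (i) each curvature-centre $z_j$ is nonzero mod $1+i$, i.e. has odd $\Re+\Im$, and (ii) $z_1$ and $z_2$ have the same image. Both of these are about parity of the integer coordinates, which is exactly the regime where Proposition \ref{prop:curvature} is informative: writing $C_j = M_j(\RR)$ with $M_j = \begin{pmatrix}\alpha_j & \gamma_j \\ \beta_j & \delta_j\end{pmatrix} \in \PGL_2(\ZZ[i])$, we have $z_j = i(\gamma_j\overline{\beta_j} - \alpha_j\overline{\delta_j})$, with $\Re(z_j) = \operatorname{Im}(\beta_j\overline{\gamma_j} + \alpha_j\overline{\delta_j})$ and $\operatorname{Im}(z_j) = \operatorname{Im}(i\beta_j\overline{\gamma_j} - i\alpha_j\overline{\delta_j})$.

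For part (i), I would use the final clause of Proposition \ref{prop:curvature}: the real part of $z_j$ is even and the imaginary part odd when $M_j \in \PSL_2(\ZZ[i])$, and the parities are swapped when $M_j \in \PGL_2(\ZZ[i]) \setminus \PSL_2(\ZZ[i])$. In either case exactly one of $\Re(z_j), \operatorname{Im}(z_j)$ is odd, so $\Re(z_j) + \operatorname{Im}(z_j)$ is odd and $z_j \not\equiv 0 \pmod{1+i}$. This disposes of the nonvanishing claim with essentially no work beyond invoking the proposition; it does not even use tangency.

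For part (ii), the idea is that tangency forces $\langle \pi(C_1), \pi(C_2)\rangle = \pm 1$ by Pedoe's proposition (Proposition 2.4 of \cite{kocikminkowski}), and then I would unwind $\langle \pi(C_1), \pi(C_2)\rangle$ using the Minkowski form $\mathbf{M}$: with $\pi(C_j) = (b_j, b_j', \Re(z_j), \operatorname{Im}(z_j))^T$ one gets
\[
\langle \pi(C_1),\pi(C_2)\rangle = -\tfrac12(b_1 b_2' + b_1' b_2) + \Re(z_1)\Re(z_2) + \operatorname{Im}(z_1)\operatorname{Im}(z_2) = \pm 1.
\]
Since all $b_j, b_j'$ are even integers (Proposition \ref{prop:curvature}), the term $-\tfrac12(b_1 b_2' + b_1' b_2)$ is an \emph{even} integer, so modulo $2$ we obtain $\Re(z_1)\Re(z_2) + \operatorname{Im}(z_1)\operatorname{Im}(z_2) \equiv 1 \pmod 2$. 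Now from part (i), $(\Re(z_j), \operatorname{Im}(z_j))$ is either $(0,1)$ or $(1,0)$ mod $2$. Checking the four cases, the product-sum is $\equiv 1 \pmod 2$ precisely when the two vectors are equal mod $2$, i.e. when $z_1 \equiv z_2 \pmod{1+i}$; if they were the two different classes the product-sum would be $0$. This gives (ii).

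\textbf{Main obstacle.} The only delicate point is making sure the half-integer $-\tfrac12(b_1 b_2' + b_1' b_2)$ really is an integer of the claimed parity; this rests on $b_j, b_j'$ being even, which is the ``persistent housefly'' dilation-by-two phenomenon and is exactly what Proposition \ref{prop:curvature} asserts for Gaussian circles. One should also double-check the sign normalization in the Pedoe proposition so that the tangency value is genuinely $\pm 1$ rather than some other constant, but since we only use its parity (odd) this is robust. Everything else is a short finite parity check.
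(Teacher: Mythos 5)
Your proposal is correct and follows essentially the same route as the paper: both use Proposition \ref{prop:curvature} to pin down the parities of $b_j, b_j', \Re(z_j), \Im(z_j)$ and then reduce the Pedoe inner product of the two tangent circles modulo $2$. If anything you are slightly more careful than the paper, which writes the tangency value as $1$ rather than $\pm 1$ (only the parity matters, as you note), and your explicit four-case check actually establishes the stronger congruence $z_1 \equiv z_2 \pmod{2}$, which is what is really used later in Proposition \ref{prop:so}.
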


\begin{proof}
        Write $C_1$, $C_2$ for the two circles, and 
        \[
                \pi(C_j) = ( b_j, b_j', r_j, m_j)^T.
        \]
        Then the curvature-centres are $z_j = r_j + m_j i$.  For Gaussian circles, the entries are all integers.  We have 
        \[
                1 = \langle \pi(C_1), \pi(C_2) \rangle = 
                \frac{1}{2}b_1b_2' + \frac{1}{2}b_1'b_2 + r_1r_2 + m_1m_2.
        \]
        By Proposition \ref{prop:curvature}, the $b_j$ and $b_j'$ are even, 
        and exactly one of $r_j$ and $m_j$ is even.  Examining the equation modulo $2$, we discover 
        $z_1 \equiv z_2 \not\equiv 0 \pmod{1+i}$.
\end{proof}

Full rank matrices $\mathbf{A}$ satisfying \eqref{eqn:kocik} are of determinant $\pm 8$.  If they are of positive determinant, the corresponding ordered oriented\footnote{Such a matrix may represent a positively or negatively oriented Descartes quadruple.} Descartes quadruple (with circles given in order by the columns in the matrix) is called \emph{positively ordered}; otherwise it is \emph{negatively ordered}.  Any Descartes quadruple can be ordered positively in twelve ways and negatively in twelve ways.

        \begin{proposition}
                \label{prop:so}
                Any two positively ordered Descartes quadruples of Gaussian circles are related by a unique element of $\SO$. 
        \end{proposition}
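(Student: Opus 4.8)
The plan is to read the Lorentz transformation off directly from the Pedoe coordinates, and then to certify that it is integral using the congruences of Proposition~\ref{prop:curvature} and Lemma~\ref{lemma:mod2}.

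Given two positively ordered Descartes quadruples of Gaussian circles $(C_1,\dots,C_4)$ and $(C_1',\dots,C_4')$, I would form the matrices $\mathbf{A}$ and $\mathbf{A}'$ whose columns are $\pi(C_i)$ and $\pi(C_i')$. By Theorem~\ref{thm:gtone} both satisfy $\mathbf{A}^{T}\mathbf{M}\mathbf{A}=\mathbf{R}=(\mathbf{A}')^{T}\mathbf{M}\mathbf{A}'$ with $\mathbf{R}$ the Descartes matrix; as $\mathbf{R}$ is invertible, so are $\mathbf{A}$ and $\mathbf{A}'$, hence \eqref{eqn:kocik} holds for each, and positive orderedness forces $\det\mathbf{A}=\det\mathbf{A}'=8$. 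Put $N:=\mathbf{A}'\mathbf{A}^{-1}$, the unique linear map with $N\mathbf{A}=\mathbf{A}'$; with this choice the uniqueness in the statement is automatic, and everything reduces to checking $N\in\SO$. That $N\in\SOR$ is immediate: $N^{T}\mathbf{M}N=(\mathbf{A}^{-1})^{T}\mathbf{R}\,\mathbf{A}^{-1}=(\mathbf{A}^{-1})^{T}(\mathbf{A}^{T}\mathbf{M}\mathbf{A})\mathbf{A}^{-1}=\mathbf{M}$; $\det N=\det\mathbf{A}'/\det\mathbf{A}=1$; and $N$ sends $\mathbf{A}(1,1,1,1)^{T}=\sum_i\pi(C_i)$ to $\sum_i\pi(C_i')$, both of which are time-like (their common norm is $(1,1,1,1)\mathbf{R}(1,1,1,1)^{T}=-8$) and forward-pointing, since every $\pi(C_i)$ and $\pi(C_i')$ is (the quadruples being positively oriented) and forward-pointing vectors are closed under addition; so $N$ preserves the time orientation and lies in $\SOR$.

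It remains to show $N$ has integer entries, i.e.\ $N\in\SO$; a priori only $N\in\tfrac{1}{8}M_4(\ZZ)$, since $\mathbf{A},\mathbf{A}'$ are integral with $\det\mathbf{A}=8$, so the real content is removing this denominator. First I would pin down $\mathbf{A}$ modulo $2$: by Proposition~\ref{prop:curvature} the curvature and co-curvature rows of $\mathbf{A}$ are even and in each column exactly one of the two curvature-centre rows is even, and reducing the relations $\langle\pi(C_i),\pi(C_j)\rangle=-1$ modulo $2$ then forces all four columns to be congruent, all equal to $(0,0,1,0)^{T}$ or all to $(0,0,0,1)^{T}$ --- and likewise for $\mathbf{A}'$. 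Hence $\mathbf{A}$ has rank one modulo $2$, so its Smith normal form is $\operatorname{diag}(1,2,2,2)$ and $2\ZZ^4\subseteq\mathbf{A}\ZZ^4$; applying $N$ gives $2N\ZZ^4\subseteq N\mathbf{A}\ZZ^4=\mathbf{A}'\ZZ^4\subseteq\ZZ^4$, so already $N\in\tfrac{1}{2}M_4(\ZZ)$. Writing $N=\tfrac{1}{2}N'$ with $N'$ integral, I have $(N')^{T}(2\mathbf{M})N'=4(2\mathbf{M})$ with $2\mathbf{M}$ integral, $(N'\bmod 2)(\mathbf{A}\bmod 2)=0$, and the mirror statements for $N^{-1}=\mathbf{A}(\mathbf{A}')^{-1}$; combining these with the explicit mod-$2$ shapes of $\mathbf{A},\mathbf{A}'$ just found --- and, if needed, the finer mod-$4$ congruences of Proposition~\ref{prop:curvature} coming from \eqref{eqn:cocurv} and Lemma~\ref{lemma:mod2} --- should force $N'\equiv 0\pmod 2$, i.e.\ $N\in M_4(\ZZ)$. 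The hard part will be exactly this last $2$-adic bookkeeping: everything before it is formal, but it is here that the Gaussian integrality of the circles must enter in an essential way.

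A cleaner alternative for the integrality would be to invoke the spinor isomorphism $\phi\colon\PGL_2(\CC)\xrightarrow{\sim}\SOR$ of Section~\ref{sec:lorentz}: by the lemma there identifying the M\"obius and Lorentz actions, $N=\phi(M)$ for the unique M\"obius transformation $M$ taking $C_i$ to $C_i'$ for all $i$. Such an $M$ carries the (Gaussian-rational) tangency points of the first quadruple to those of the second, so $M\in\PGL_2(\QQ(i))$; it also carries the lax lattice of each $C_i$ --- itself a $\ZZ[i]$-basis of $\ZZ[i]^2$ --- onto that of $C_i'$, and showing this forces $M$ to stabilise $\ZZ[i]^2$ would give $M\in\PGL_2(\ZZ[i])$, hence $N=\phi(M)\in\phi(\PGL_2(\ZZ[i]))=\SO$.
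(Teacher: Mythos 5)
Your setup is exactly the paper's: you form the Pedoe matrices, take $N=\mathbf{A}'\mathbf{A}^{-1}$ as the unique candidate, and verify $N^{T}\mathbf{M}N=\mathbf{M}$ and $\det N=1$ just as the paper does. One small slip in the time-orientation step: not every $\pi(C_i)$ in a positively oriented Descartes quadruple points forward in time --- the bounding circle, when present, is negatively oriented and maps to a backward-pointing vector --- so ``every $\pi(C_i)$ is forward-pointing, and forward-pointing vectors are closed under addition'' does not apply verbatim. You need instead that the curvature (and co-curvature) sum of a positively oriented quadruple is positive, or the paper's observation that a quadruple contains at most one negatively oriented circle, so $N$ cannot reverse time.

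The genuine gap is the integrality of $N$, which you explicitly leave open. Your mod-$2$ analysis of $\mathbf{A}$ is correct (it is precisely Lemma \ref{lemma:mod2} combined with the parities in Proposition \ref{prop:curvature}), and the Smith-normal-form argument does yield $2N\in M_4(\ZZ)$. But the last halving --- showing $N'=2N$ is even --- is where all the arithmetic content of the proposition lives, and ``should force $N'\equiv 0 \pmod{2}$'' is not a proof: at that point the congruence $(N'\bmod 2)(\mathbf{A}\bmod 2)=0$ controls only the single column of $N'$ corresponding to the rank-one image of $\mathbf{A}$ modulo $2$, and together with $N^{T}\mathbf{M}N=\mathbf{M}$, $\det N=1$ and the mirror statements for $N^{-1}$ this does not visibly pin down the remaining entries. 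The paper closes exactly this step by a different device: from the same parity data it factors each quadruple matrix as a product of $T_1=\operatorname{diag}(2,2,2,1)$ or $T_2=\operatorname{diag}(2,2,1,2)$ with a unimodular integer matrix, shows via Proposition \ref{prop:curvature} and Lemma \ref{lemma:mod2} that all four circles of a quadruple have the same type (so each quadruple uses a single $T_i$), uses $z\mapsto iz$ to reduce to the case where both quadruples share the same $T_i$, and then reads off integrality of $AB^{-1}$ from the common factorization. Your alternative route through the spinor map has the same status: ``showing this forces $M$ to stabilise $\ZZ[i]^2$'' is again the entire arithmetic content, left as a to-do. As written, the proposal establishes only $N\in\tfrac12 M_4(\ZZ)\cap\SOR$, not $N\in\SO$.
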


        \begin{proof}
                The matrix for a Descartes quadruple is invertible over $\QQ$, by \eqref{eqn:kocik}.  Let $A$ and $B$ be two such matrices.  Let $N = AB^{-1}$; this is the unique element such that $NB = A$.  To demonstrate that $N \in \operatorname{O}^+_{1,3}(\ZZ)$, first remark that
                \[
                        N^T \mathbf{M} N = (B^{-1})^T A^T \mathbf{M} A B = M
                \]
where the last equality follows from the assumption that $A$ and $B$ are Descartes quadruples, so that
                \[
                        B^T M B = R = A^T M A.
                \]

                Secondly, $N$ preserves the orientation of time if it preserves the orientation of circles.  Both $A$ and $B$ are positively oriented Descartes quadruples, hence they contain at most one negatively oriented circle (since two negatively oriented circles cannot have disjoint interiors).  Therefore, $N$ cannot be orientation-reversing.

                Finally, we wish to show that $N$ has entries in $\ZZ$.  By Lemma \ref{lemma:mod2}, the matrix of any Descartes quaduple is a product of one of the matrices
                \[
                        T_1 = \begin{pmatrix}
                                2 & 0 & 0 & 0 \\
                                0 & 2 & 0 & 0 \\
                                0 & 0 & 2 & 0 \\
                                0 & 0 & 0 & 1 
                        \end{pmatrix},
                        \quad \mbox{or} \quad
                        T_2 = \begin{pmatrix}
                                2 & 0 & 0 & 0 \\
                                0 & 2 & 0 & 0 \\
                                0 & 0 & 1 & 0 \\
                                0 & 0 & 0 & 2 
                        \end{pmatrix}
                \]
                and an integer matrix of determinant $\pm 1$.  Furthermore, with reference to Proposition \ref{prop:curvature} and Lemma \ref{lemma:mod2}, we find that the circles in any quadruple are all images of matrices of $\PSL_2(\ZZ[i])$ or all images of its nontrivial coset in $\PGL_2(\ZZ[i])$.  In other words, there are two types of quadruples, one expressible using $T_1$ and one expressible using $T_2$.  One can map from a quadruple of one type to the other by $z \mapsto iz$, whose image in $\operatorname{O}^+_{1,3}(\ZZ)$ is
                \[
                        \begin{pmatrix}
                                1 & 0 & 0 & 0 \\
                                0 & 1 & 0 & 0 \\
                                0 & 0 & 0 & 1 \\
                                0 & 0 & -1 & 0 \\
                        \end{pmatrix}.
                \]
                Therefore, we may assume without loss of generality that $A$ and $B$ represent quadruples of the same type.  Then $N=AB^{-1}$ is clearly an integer matrix.
                Finally, $N = AB^{-1}$ has determinant $+1$ since $A$ and $B$ are both positively ordered.
        \end{proof}

                In other words, the action of $\SO$ on the space of positively ordered Descartes quadruples of Gaussian circles is simply transitive.  Thus, such quadruples are in bijection with $\SO$ and hence with $\PGL_2(\ZZ[i])$.  Hence, such quadruples form a torsor for $\SO$, and the bijection depends on a choice of basepoint.  We will choose for the basepoint the \emph{base quadruple} in Figure \ref{fig:quadLVL}, given by 
        \[
                \begin{pmatrix} 0 & 0 & 2 & 2 \\
                                0 & 2 & 2 & 0 \\
                                0 & 0 & 2 & 0 \\
                               -1 & 1 & 1&1 
        \end{pmatrix}.
\]


        \begin{theorem}[Main Bijection, Second Part]
                \label{thm:mbsecond}
                Under the bijection of Theorem \ref{thm:mbfirst} (Main Bijection, First Part) between vertices of the Apollonian kingdom and Gaussian circles, chambers of the Apollonian kingdom are in bijection with Descartes quadruples of Gaussian circles.  In particular, two vertices are connected by an edge if and only if the two corresponding circles are tangent within some Descartes quadruple of Gaussian circles; and, components of the Apollonian kingdom are in bijection with Apollonian palaces.
        \end{theorem}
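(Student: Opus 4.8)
The plan is to exhibit both sides as quotients of the same group by the same finite subgroup, using that $\PGLZ$ acts on both. On the Apollonian kingdom, an element $M$ acts on lax vectors of $\ZZ[i]^2$, hence preserves the notions of basis, superbasis and ultrabasis, so it permutes the chambers; and it carries the lax lattice decorating a vertex (Definition~\ref{def:laxvertexlattice}) to its $M$-translate. On oriented Gaussian circles, $\PGLZ$ acts by M\"obius transformations, and by Proposition~\ref{prop:circlelaxvertexlattice} this again carries the lax lattice of a circle to its $M$-translate; since a M\"obius map preserves tangency and is an orientation-preserving homeomorphism of $\CC_\infty$ (so it preserves disjointness of interiors), it carries Descartes quadruples of Gaussian circles to Descartes quadruples of Gaussian circles. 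Hence the vertex--circle bijection of Theorem~\ref{thm:mbfirst} is $\PGLZ$-equivariant, and it suffices to compare the two $\PGLZ$-sets $\{\text{chambers}\}$ and $\{\text{Descartes quadruples of Gaussian circles}\}$ and to match a base point.

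For the base point I would compute directly that the base chamber of Figure~\ref{fig:chamberLVL}, built from $u=(1,0)^T$, $v=(0,-i)^T$, $w=(-1,i)^T$, maps under Theorem~\ref{thm:mbfirst} to the base quadruple displayed before the theorem: read off the lax lattice at each of the four vertices from Figure~\ref{fig:standardlattices}, feed an ordered basis of each through Proposition~\ref{prop:curvature}, and observe that the four resulting data $(b,b',\Re z,\Im z)$ are exactly the columns of the base-quadruple matrix, a positively ordered Descartes quadruple of the strip packing. Next I would check that $\PGLZ$ acts transitively on chambers: it acts transitively on superbases, and the transposition $u\leftrightarrow v$ — realized by a matrix of determinant $-1$ — interchanges the two ultrabasis extensions of a fixed superbasis, so it interchanges the two chambers sharing a given wall. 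Finally I would identify the stabilizer of the base chamber as the group $A_4$ of rotational symmetries of its tetrahedron: the even permutations of $u,v,w$ are realized in $\PGLZ$ and fix the chamber, while an odd permutation swaps the left/right face structure and hence does not; and an element fixing all six edge-labels and all four vertex-lattices must be the identity. On the other side, Proposition~\ref{prop:so} says $\SO\cong\PGLZ$ acts simply transitively on positively ordered Descartes quadruples of Gaussian circles, of which each unordered quadruple has twelve; since an odd permutation of the columns of the configuration matrix flips the sign of its determinant, the stabilizer of an unordered quadruple is again a copy of $A_4$, and transitivity on unordered quadruples follows from Proposition~\ref{prop:so} together with the remark there that $z\mapsto iz$ exchanges the two types. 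Matching the base chamber with the base quadruple therefore gives
\[
\{\text{chambers}\}\;\cong\;\PGLZ/A_4\;\cong\;\SO/A_4\;\cong\;\{\text{Descartes quadruples of Gaussian circles}\},
\]
which is precisely the vertex-to-circle bijection of Theorem~\ref{thm:mbfirst} applied four vertices at a time.

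The two remaining assertions are then formal. Every edge of the kingdom lies on a wall, hence in a chamber (the kingdom being assembled by gluing chambers along walls), so two vertices are adjacent exactly when the two circles are tangent within a common Descartes quadruple of Gaussian circles — which, as the theorem warns, is strictly stronger than mere tangency. For the last statement, fix a component of the kingdom; its chambers correspond to Descartes quadruples, adjacent chambers share a wall, and the fact that each wall lies in exactly two chambers is precisely the statement that a mutually tangent triple inside a Descartes quadruple completes to a Descartes quadruple in exactly two ways. Since an Apollonian circle packing is by definition the closure of a single Descartes quadruple under such completions, an induction along the tree of chambers of the component identifies its circles with one Apollonian circle packing and identifies the component, as an edge-labelled graph, with that packing's Apollonian palace; conversely any Gaussian Apollonian packing contains a Descartes quadruple of Gaussian circles, hence a chamber, hence — by the same induction — equals the packing of that chamber's component. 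Thus components correspond bijectively to Apollonian palaces.

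The main obstacle is the middle paragraph: correctly identifying the two stabilizers and seeing that they match under the spinor isomorphism, i.e.\ that the combinatorial symmetry group of a standard-ultrabasis chamber and the group of ``positive re-orderings'' of a Descartes quadruple are the same copy of $A_4$ inside $\PGLZ\cong\SO$, keeping straight the left/right handedness of chambers against the two types $T_1,T_2$ of Gaussian Descartes quadruples so that the counts line up and not merely the orbits. The base-chamber computation via Proposition~\ref{prop:curvature}, the equivariance, and the formal deductions of the edge and component statements are all routine by comparison.
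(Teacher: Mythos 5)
Your proof follows essentially the same route as the paper's: the base chamber is matched with the base quadruple via Proposition \ref{prop:curvature}, the $\PGLZ$-action is used equivariantly on both sides (the paper realizes transitivity on chambers by writing an arbitrary chamber in standard ultrabasis form, i.e.\ as the image of the base chamber under $M = \begin{pmatrix} u & iv \end{pmatrix}$), and Proposition \ref{prop:so} supplies surjectivity onto Descartes quadruples, with the edge and component statements then read off just as you do. The only divergence is your stabilizer detour for injectivity, where the paper instead argues in one line from the bijectivity of the vertex--circle map; as written your identification of the chamber stabilizer with $A_4$ is underjustified, since the even permutations of $u,v,w$ realize only a cyclic group of order $3$ --- you would still need the three double transpositions of the four vertices, or, more simply, the reverse inclusion of stabilizers, which follows from equivariance once one grants that a chamber is determined by its four vertices (a fact both your argument and the paper's injectivity step implicitly use).
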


        \begin{proof}
Consider the first standard ultrabasis chamber with $u = (1,0)^T$, $v=(0,-i)^T$, whose vertices are
\[
        \begin{pmatrix}
                1 & 0 \\ 0 & 1 \end{pmatrix}
               , 
       \begin{pmatrix}
                -1 & i \\ i & 0 \end{pmatrix}
               , 
           \begin{pmatrix}
                0 & -i \\ -i & -1 \end{pmatrix}
               , 
       \begin{pmatrix}
                -1 & i \\ i-1 & i \end{pmatrix}.
       \]
       We will call this chamber the \emph{base chamber} (shown in Figure \ref{fig:chamberLVL}), because its associated circles (under the first part of the Main Bijection, Theorem \ref{thm:mbfirst}) form the base quadruple.  
      
The group $\PGLZ$ acts on the Apollonian kingdom in the obvious way, since it acts on lax vectors and lax lattices; this action is in agreement with the action of $\PGLZ$ on the corresponding circles via M\"obius transformations.  The image of a chamber is another chamber.  

Take an arbitrary chamber of the Apollonian kingdom, and express it in first standard ultrabasis form in terms of some $u$ and $v$.  Let $M = \begin{pmatrix} u & iv \end{pmatrix}$ be the matrix formed of columns $u$ and $iv$; it is an element of $\PGL_2(\ZZ[i])$.  Then this chamber is the image under $M$ of the base chamber, and so it must correspond to the quadruple which is the image under $\phi(M)$ of the base quadruple.  Hence the vertices of any chamber form a Descartes quadruple.  In other words, the Main Bijection
\[
        \mbox{vertices} \rightarrow \mbox{circles}
\]
induces a function from chambers to Descartes quadruples.  

This function must be injective because the Main Bijection is bijective.  The missing piece is surjectivity (i.e., perhaps the four vertices mapping to a quadruple aren't arranged into a chamber).  But any quadruple can be positively oriented and hence, by Proposition \ref{prop:so}, given as an image under $\SO$ of the base quadruple.  The corresponding element of $\PGLZ$ therefore maps the base chamber to another chamber whose circles form that quadruple. 
        \end{proof}

        \section{Afterthoughts}
        \label{sec:after}

        The action on circles and quadruples used in the proof of the Main Bijection is described by Graham, Lagarias, Mallows, Wilks and Yan in \cite{\gtone}, where they call it the \emph{M\"obius action}.  They also describe a \emph{right} action of $\SOR$ on the space of Descartes quadruples, an action which cannot be interpreted as acting on individual circles, but only on quadruples.  The subgroup of $\SO$ consisting of elements taking any quadruple to another in the same Apollonian packing (under this \emph{right} action) is called, appropriately, the \emph{Apollonian group}, originally defined by Hirst \cite{\Hirst}, and studied also in \cite{\AhSt, \ntone, \Soder}.  

\begin{theorem}
        Any primitive Descartes quadruple, upon dilation by a factor of two, can be realised as a quadruple of Gaussian circles.
\end{theorem}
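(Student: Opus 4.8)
The plan is to reduce, via the Main Bijection (Theorem~\ref{thm:mbmain}), to a single Descartes quadruple, and then to upgrade that quadruple to a quadruple of Gaussian circles using the argument of Proposition~\ref{prop:so}. First I would observe that $\gcd$ of the four curvatures of a Descartes quadruple is preserved by the completion move --- if $g$ divides $a,b,c$ then $g\mid d$ if and only if $g\mid d'$ by \eqref{eqn:desclinear} --- so a quadruple is primitive exactly when the packing it generates is primitive, and dilating a quadruple by two dilates its packing by two. It therefore suffices to exhibit, in each primitive integral Apollonian packing $P$, one Descartes quadruple $\Qcal$ whose dilation by two is a quadruple of Gaussian circles: the four dilated circles are then vertices of the Apollonian kingdom (Theorem~\ref{thm:mbfirst}) forming a chamber (Theorem~\ref{thm:mbsecond}), so they lie in a single component; by the Main Bijection that component is the Apollonian palace of a Gaussian packing, and since adjacency of chambers is the completion move, that packing is exactly the dilation of $P$ by two, so all of its circles are Gaussian.

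Next I would bring $P$ into position. Using the reduction theory for integral Apollonian packings \cite{\gtone,\beyond}, one may apply a Euclidean motion (and, if necessary, a reflection) taking $P$ to a position in which it contains a strongly integral Descartes quadruple $\Qcal$: all four curvatures are integers and all four curvature--centres lie in $\ZZ[i]$. Dilating $\Qcal$ by two, the curvatures become even integers, the curvature--centres are unchanged (curvature--centre is dilation invariant), and each co-curvature is halved. The crucial claim is that, for a \emph{primitive} quadruple in a suitably chosen such position, the halved co-curvatures are again even integers and each curvature--centre is a Gaussian integer whose real and imaginary parts have opposite parity, the four being of one common type (in the dichotomy of Proposition~\ref{prop:curvature}). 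Equivalently, the augmented curvature--centre matrix $\mathbf{A}$ of the dilated $\Qcal$ --- whose columns are the Pedoe-map images of its circles --- has all columns in the lattice singled out by Proposition~\ref{prop:curvature}, and hence factors as $\mathbf{A}=T_jG$ with $T_j\in\{T_1,T_2\}$ and $G\in\operatorname{SL}_4(\ZZ)$, exactly as the matrices of Gaussian Descartes quadruples do in the proof of Proposition~\ref{prop:so}.

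Finally I would order the dilated $\Qcal$ positively and set $N=\mathbf{A}\mathbf{A}_0^{-1}$, where $\mathbf{A}_0$ is the (positively ordered) base quadruple, which satisfies $\mathbf{A}_0=T_1G_0$ with $G_0\in\operatorname{SL}_4(\ZZ)$. Running the proof of Proposition~\ref{prop:so} verbatim then gives $N\in\SO$: indeed $N^T\mathbf{M}N=\mathbf{M}$ because $\mathbf{A}^T\mathbf{M}\mathbf{A}=\mathbf{R}=\mathbf{A}_0^T\mathbf{M}\mathbf{A}_0$; $N$ preserves the direction of time because a positively oriented quadruple contains at most one negatively oriented circle; $N$ is an integer matrix because $\mathbf{A}=T_jG$ and $\mathbf{A}_0=T_jG_0$ with $G,G_0\in\operatorname{SL}_4(\ZZ)$ of the same type (Lemma~\ref{lemma:mod2}, possibly after applying $z\mapsto iz$ to match the two types); and $\det N=1$ since both quadruples are positively ordered. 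Then $\phi^{-1}(N)\in\PGL_2(\ZZ[i])$ carries the base quadruple, which consists of Gaussian circles, to the dilated $\Qcal$, and $\PGL_2(\ZZ[i])$ preserves Gaussian circles, so $\Qcal$ dilated by two is a quadruple of Gaussian circles, as required.

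I expect the main obstacle to be the middle step: verifying that a primitive Descartes quadruple, placed strongly integrally, really does land in the Gaussian lattice after dilation by two --- that the co-curvatures become even and the curvature--centres acquire opposite parities of one common type. For a generic placement none of this holds, so this is precisely where both primitivity and the persistent factor of two must be used; concretely it will reduce to a congruence analysis built from \eqref{eqn:cocurv} and the reasoning of Lemma~\ref{lemma:mod2}, with the strongly integral normalization of $\Qcal$ chosen so as to pin down the parities.
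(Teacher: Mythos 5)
Your proposal has a genuine gap at exactly the point you flag yourself: the ``crucial claim'' that a primitive quadruple, once placed strongly integrally and contracted so its curvatures double, lands in the Gaussian lattice --- co-curvatures becoming even integers and curvature--centres acquiring the one-even/one-odd parity pattern of Proposition~\ref{prop:curvature}, all four of a common type --- is not proved, and it is not a routine afterthought. Strong integrality controls only the integrality of the curvatures and curvature--centres; it says nothing about the co-curvatures modulo $4$ or the parities of the curvature--centres, and both depend on the chosen placement (translating the quadruple changes the co-curvatures, since they measure position relative to the unit circle via \eqref{eqn:cocurv}). The conditions you need are precisely the ones that, by Proposition~\ref{prop:curvature} and Lemma~\ref{lemma:mod2}, \emph{characterize} Gaussian quadruples among strongly integral ones, so the ``crucial claim'' is essentially a restatement of the theorem rather than a reduction of it. There is also a circularity worry in your second step: you import the existence of a strongly integral placement from the reduction theory of \cite{\gttwo}, but the paper presents the present theorem as giving a \emph{new proof} of exactly that result (Theorem 4.3 of \cite{\gttwo}), so the one external fact you lean on is the one this theorem is meant to recover.

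The paper's proof sidesteps the entire matrix-congruence problem by working with a single row. It uses the \emph{right} action of $\SO$ on the $4\times 4$ quadruple matrices, which acts row by row; the first row is the curvature vector, which lies on the light cone by \eqref{eqn:descquad}, and $\SO$ acts transitively on primitive integral light-cone vectors. Hence some $N\in\SO$ carries $(0,0,1,1)$ to the given coprime curvature vector $(b_1,b_2,b_3,b_4)$, and applying $N$ on the right to the base quadruple of Figure~\ref{fig:quadLVL} (a Gaussian quadruple with curvatures $0,0,2,2$) produces a Gaussian quadruple with curvatures $2b_1,\dots,2b_4$, which is the sought realisation up to Euclidean motion. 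If you want to salvage your route, the missing middle step would have to be carried out as an explicit congruence argument over the translation lattice of strongly integral placements; but the one-row argument via transitivity on the light cone makes all of that unnecessary.
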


\begin{proof}
        The \emph{right} action of $\SOR$ on Descartes quadruples acts on rows; the first row of a quadruple, which is formed of the four curvatures, is always an element of the light cone, by \eqref{eqn:descquad}.  This action of $\SOR$ on the light cone is transitive and defined over $\ZZ$; hence primitive vectors map to primitive vectors transitively (see \cite[Chapter 2.6]{KevinBrown}).  In particular, every quadruple of coprime integers $(b_1, b_2, b_3, b_4)$ is the image of $(0,0,1,1)$ under some element $N \in \SO$.  But then the image of the base quadruple under $N$ (acting on the right) is a quadruple of Gaussian circles having curvatures $2b_1, 2b_2, 2b_3, 2b_4$.
\end{proof}

This provides a new proof of Theorem 4.3 of \cite{\gttwo}, which states that any primitive Apollonian circle packing in $\CC_\infty$ can be made strongly integral by a Euclidean motion.

Somewhat analogously to the Apollonian group, the left action has a subgroup fixing the Apollonian strip packing of Figure \ref{fig:strip}; it may permute the other packings.

\begin{proposition}
        \label{prop:inftrees}
The subgroup of $\PGLZ$ taking any one Apollonian circle packing to itself is a conjugate of the subgroup
\[
        \Gamma = \left\langle 
\begin{pmatrix} 1 & 0 \\ 1 & 1 \end{pmatrix}, 
\begin{pmatrix} 0 & i \\ i & 1 \end{pmatrix}
        \right\rangle. 
\]
Furthermore, the images of $\RR$ under $\Gamma$ form the Apollonian strip packing (Figure \ref{fig:strip}) containing the base quadruple (Figure \ref{fig:quadLVL}).
\end{proposition}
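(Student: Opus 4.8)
The plan is to derive the statement from the Main Bijection (Theorem~\ref{thm:mbmain}) and Proposition~\ref{prop:so}. Write $S=\begin{pmatrix}1&0\\1&1\end{pmatrix}$ and $T=\begin{pmatrix}0&i\\i&1\end{pmatrix}$ for the generators of $\Gamma$, and let $\Pcal_0$ be the strip packing of Figure~\ref{fig:strip}, so that by construction $\Pcal_0=\Gamma\cdot\RR$ and $\Pcal_0$ contains the base quadruple of Figure~\ref{fig:quadLVL}; this already settles the last sentence, so the content is the first. First I would reduce to the single packing $\Pcal_0$: every Gaussian Apollonian packing contains a Descartes quadruple, by Proposition~\ref{prop:so} the group $\PGLZ\cong\SO$ acts transitively on positively ordered Gaussian Descartes quadruples, and M\"obius maps carry Apollonian closures to Apollonian closures; hence $\PGLZ$ acts transitively on Gaussian Apollonian packings, the stabilizers $\{g\in\PGLZ:g\Pcal=\Pcal\}$ form a single conjugacy class of subgroups, and it suffices to prove $\operatorname{Stab}(\Pcal_0)=\Gamma$.

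One inclusion is immediate: for $\gamma\in\Gamma$ one has $\gamma\Pcal_0=\gamma(\Gamma\cdot\RR)=\Gamma\cdot\RR=\Pcal_0$, so $\Gamma\subseteq\operatorname{Stab}(\Pcal_0)$. For the reverse inclusion the key observation is that $\Gamma$ already contains the full stabilizer of the single circle $\RR$: a one-line matrix computation gives $T^{-1}ST=\begin{pmatrix}1&1\\0&1\end{pmatrix}$, so $\Gamma$ contains $\begin{pmatrix}1&0\\1&1\end{pmatrix}$ and $\begin{pmatrix}1&1\\0&1\end{pmatrix}$, hence the group these generate, namely $\PSL_2(\ZZ)$. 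Since $\PSL_2(\ZZ)$ is precisely the subgroup of $\PGLZ$ fixing $\RR$ together with its orientation (as recorded in Section~\ref{sec:prince}), we obtain $\operatorname{Stab}_{\PGLZ}(\RR)\subseteq\Gamma\subseteq\operatorname{Stab}(\Pcal_0)$; in particular the stabilizer of the vertex $\RR$ inside $G:=\operatorname{Stab}(\Pcal_0)$ equals $\PSL_2(\ZZ)$.

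Now view the component $\Kcal_0$ of the kingdom attached to $\Pcal_0$ as a connected graph whose vertices are the circles of $\Pcal_0$ and whose edges are the tangencies within Descartes quadruples, with $G$ acting by graph automorphisms. I would finish by showing $G=\langle\PSL_2(\ZZ),T\rangle$: since $S\in\PSL_2(\ZZ)$ this equals $\langle S,T\rangle=\Gamma$, while $\langle\PSL_2(\ZZ),T\rangle\subseteq\Gamma$ by the previous paragraph, so $G=\Gamma$. The equality $G=\langle\PSL_2(\ZZ),T\rangle$ follows from the standard fact that if a group acts on a connected graph, if the stabilizer of a vertex acts transitively on that vertex's neighbours, and if some group element carries the vertex to a neighbour, then the stabilizer and that element generate the whole group (one shows first, by induction on graph distance, that they already act vertex-transitively, and then that vertex-transitivity forces equality with $G$). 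I apply this at the vertex $\RR$: its stabilizer in $G$ is $\PSL_2(\ZZ)$; the element $T$ sends $\RR$ to the curvature-$2$ circle centred at $\tfrac i2$, which is tangent to $\RR$ inside the base quadruple and hence a neighbour; and the neighbours of $\RR$ in $\Kcal_0$ are exactly the circles of $\Pcal_0$ tangent to $\RR$ — the Ford-type circles tangent to $\RR$ at the points of $\QQ$, together with the parallel line (``tangent at $\infty$'') — which $\PSL_2(\ZZ)$ permutes exactly as it permutes $\QQ\cup\{\infty\}$, hence transitively.

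The hard part will be this last step: one must check that the neighbours of the vertex $\RR$ in $\Kcal_0$ are precisely the circles of $\Pcal_0$ tangent to $\RR$ (equivalently, that every tangency in an Apollonian packing occurs inside some Descartes quadruple), identify these circles with $\QQ\cup\{\infty\}$ through their tangency points, and verify that $\PSL_2(\ZZ)$ acts there by the usual transitive action. An alternative, which I would use if it turns out cleaner to write out, replaces this by the fact that the stabilizer in $\PGLZ$ of a single chamber is the group $A_4$ of even permutations of its four circles — a nonidentity element fixing all four would, via the spinor map, fix four linearly independent vectors of Minkowski space — from which one deduces directly that $G$ acts transitively on the vertices of $\Kcal_0$. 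Everything else in the argument is either a short matrix computation or a direct appeal to Proposition~\ref{prop:so} and to facts already established.
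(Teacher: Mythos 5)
Your proposal is correct, but it reaches the conclusion by a genuinely different route from the paper. The paper works at the level of \emph{chambers}: it asserts that the stabilizer of the packing is generated by the stabilizer of the base chamber (the automorphism group $A_4$ of the base quadruple, generated by the two rotations $B$ and $C$) together with one element $A$ crossing a wall to an adjacent chamber, and then uses the explicit word identity $A = B^{-1}DBD^{-1}B^{-1}DB$ with $D=C^{-1}A=\left(\begin{smallmatrix}1&0\\1&1\end{smallmatrix}\right)$ to collapse the three generators to the two in $\Gamma$. You work at the level of \emph{vertices}: the same ``connected graph, point stabilizer, plus one edge-crossing element'' principle, but applied to the circle $\RR$, whose stabilizer $\PSL_2(\ZZ)$ you first locate inside $\Gamma$ via $T^{-1}ST=\left(\begin{smallmatrix}1&1\\0&1\end{smallmatrix}\right)$ (the computation checks out), and whose transitive action on its neighbours is exactly the content of Proposition \ref{prop:prince} together with the standard transitivity of $\PSL_2(\ZZ)$ on $\PP^1(\QQ)$ --- so the step you flag as ``the hard part'' is already supplied by the paper's court/Conway's-palace machinery. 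What each approach buys: the paper's is shorter on structure but leans on an unexplained generation claim and an ad hoc relation that must simply be verified; yours replaces the relation by the conceptual input that the court of $\RR$ is Conway's palace, and as a bonus your induction shows $\Gamma$ acts transitively on the vertices of the component, which is what actually proves the ``Furthermore'' clause. On that last point, be careful not to be circular: you should define $\Pcal_0$ as the Apollonian packing generated by the base quadruple (not as $\Gamma\cdot\RR$), run the argument, and then conclude $\Gamma\cdot\RR=\Pcal_0$ from the vertex-transitivity you established, rather than declaring the last sentence settled ``by construction.''
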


\begin{proof}
The group is generated by the automorphisms of the base quadruple, together with a map which swaps the base quadruple with one of its neighbours as a chamber within the Apollonian kingdom.  
The swap is accomplished by
\[
A =         \begin{pmatrix}
                0 & -1 \\
                1 & 0 
        \end{pmatrix}.
\]
The automorphisms are generated by the two rotations
\[
B =        \begin{pmatrix}
                0 & i \\
                i & 1 
        \end{pmatrix},
C =         \begin{pmatrix}
                1 & -1 \\
                1 & 0 
        \end{pmatrix}
\]
around two vertices of a chamber.  If we let $D = C^{-1}A$, then there's a relation
\[
        A = B^{-1}DBD^{-1}B^{-1}DB.
\]
Therefore the group is given by $\langle D, B \rangle$.
\end{proof}

\begin{corollary}
        \label{cor:packing}
        The collection of images of the real line under the M\"obius transformations $N\Gamma$ for some $N \in \PGL_2(\ZZ[i])$ form a half-primitive strongly integral Apollonian circle packing.
\end{corollary}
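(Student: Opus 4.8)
The plan is to recognize the set $\{N\gamma(\RR):\gamma\in\Gamma\}$ as the M\"obius image $N(P_0)$ of the Apollonian strip packing $P_0$ of Figure \ref{fig:strip} and then to read off each of the three required properties from results already in hand; the only part needing genuine work will be half-primitivity.

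First I would invoke Proposition \ref{prop:inftrees}: the images of $\RR$ under $\Gamma$ are exactly the circles of $P_0$, so $\{N\gamma(\RR):\gamma\in\Gamma\}=N(P_0)$. Since $N\in\PGL_2(\ZZ[i])\subseteq\PGL_2(\CC)$ acts as a M\"obius transformation, and M\"obius transformations preserve circles and tangencies and hence carry Apollonian packings to Apollonian packings (see the remarks following Figure \ref{fig:1223nohalf}), $N(P_0)$ is an Apollonian circle packing in $\CC_\infty$. Each of its circles has the form $N\gamma(\RR)$ with $N\gamma\in\PGL_2(\ZZ[i])$, hence is a Gaussian circle; and by Proposition \ref{prop:curvature} every Gaussian circle realized by a matrix of $\PGL_2(\ZZ[i])$ has even integer curvature and Gaussian-integer curvature-centre. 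Thus $N(P_0)$ is integral, all its curvatures are even, and it is strongly integral.

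It remains to prove half-primitivity; since every curvature is even, the half-curvatures are integers and I must show their gcd equals $1$. I would argue in two moves. First, it suffices to compute the gcd on a single Descartes quadruple of $N(P_0)$: all Descartes quadruples within a fixed packing are obtained from one another by the right action of the Apollonian group, which sits inside $\GL_4(\ZZ)$ and acts on the curvature row, so these rows all have a common gcd; and by the linear Descartes relation \eqref{eqn:desclinear} every curvature of the packing is a $\ZZ$-combination of the four curvatures of any one quadruple, so this common gcd controls the gcd of all half-curvatures. Second, I would take the convenient quadruple $N(\text{base quadruple})$, with base quadruple as in Figure \ref{fig:quadLVL}: by equivariance of the Pedoe map (Section \ref{sec:lorentz}) its curvature-centre matrix is $\phi(N)\mathbf{A}_0$, where $\mathbf{A}_0$ is the explicit base-quadruple matrix displayed just before Theorem \ref{thm:mbsecond}, so its curvature row is $\mathbf{n}\,\mathbf{A}_0$ with $\mathbf{n}$ the first row of $\phi(N)\in\SO$ — a primitive integer vector, since the rows of a $\GL_4(\ZZ)$-matrix are primitive. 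A direct inspection of $\tfrac12\mathbf{n}\,\mathbf{A}_0$, using the explicit entries of $\mathbf{A}_0$, shows that any common divisor of its four coordinates divides every coordinate of $\mathbf{n}$, hence equals $1$.

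The only real obstacle is in this last step, and it is arithmetic rather than conceptual: one must keep the factor of $2$ straight (the base quadruple has curvatures $0,0,2,2$, not $0,0,1,1$), and one must make precise the claim that testing a single quadruple suffices — for which the essential input is that the Apollonian right action lies inside $\GL_4(\ZZ)$ and so preserves primitivity of the curvature vector.
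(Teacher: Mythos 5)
Your proof is correct, and it supplies details that the paper itself leaves implicit: Corollary \ref{cor:packing} is stated without proof, as an immediate consequence of Proposition \ref{prop:inftrees} (identifying the collection as $N$ applied to the strip packing), the fact that M\"obius maps preserve Descartes configurations, and Proposition \ref{prop:curvature} (even integer curvatures and Gaussian-integer curvature-centres for all Gaussian circles). Your first two steps are exactly that intended argument. Where you genuinely add something is half-primitivity, and your computation checks out: with $\mathbf{n}=(n_1,n_2,n_3,n_4)$ the first row of $\phi(N)$, one gets $\mathbf{n}\,\mathbf{A}_0=(-n_4,\,2n_2+n_4,\,2n_1+2n_2+2n_3+n_4,\,2n_1+n_4)$, all entries even by Proposition \ref{prop:curvature}, and any common divisor of the halved entries divides $n_4/2$, hence $n_2$, $n_1$, $n_3$, and $n_4$, so it divides $\gcd(\mathbf{n})=1$ since $\phi(N)\in\SO\subset\GL_4(\ZZ)$ has primitive rows. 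Two small remarks. First, the appeal to the Apollonian group is an unnecessary detour: the linear Descartes relation \eqref{eqn:desclinear} alone shows every curvature of the packing is a $\ZZ$-combination of the four curvatures of one quadruple, so the gcd of all half-curvatures equals the gcd on any single quadruple. Second, the paper's own machinery yields half-primitivity in one line without your entry-by-entry inspection: inside the proof of Proposition \ref{prop:so} it is shown (via Lemma \ref{lemma:mod2}) that any Gaussian Descartes quadruple matrix factors as $T_i U$ with $U$ an integer matrix of determinant $\pm 1$, so the half-curvature row is the first row of $U$, which is automatically primitive. Your route and this one use the same underlying inputs; yours is more explicit but tied to the particular basepoint $\mathbf{A}_0$, while the $T_iU$ factorization applies to an arbitrary Gaussian quadruple directly.
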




If we take together all the left cosets $N\Gamma$ of $\Gamma$ for $N \in \PSL_2(\ZZ[i])$, drawing all the images of the real line, we obtain an Apollonian superpacking, as studied by Graham, Lagarias, Mallows, Wilks and Yan \cite{\gttwo}.  This picture, which when dilated by two those authors call the \emph{standard strongly integral super-packing}, is shown in red in Figure \ref{fig:superpacking}.  This repeating pattern contains one copy of every Apollonian circle packing, up to Euclidean motions, in its unit square (except the strip packing).


The Apollonian superpacking has a sister, which consists of the images of the real line under all $N\Gamma$ for $N \in \begin{pmatrix}i & 0 \\ 0 & 1 \end{pmatrix}\PSL_2(\ZZ[i])$, and is shown in blue in Figure \ref{fig:superpacking}.  Red and blue taken together, the resulting riot of circles is formed of all the images of the real line under $\PGL_2(\ZZ[i])$.  

\section{Appendix: Bestvina and Savin's topograph}
\label{sec:bestvina-savin}

Also inspired by Conway, but without reference to Apollonian circle packings, Bestvina and Savin study a complex formed of bases and superbases in $\ZZ[i]^2$ \cite{\bestsavin}.  Because a basis is contained in four superbases, each basis is associated to a square $2$-cell, whose edges are the superbases.  Square $2$-cells are glued three along each edge, reflecting the fact that a superbasis contains three bases.  The link of a vertex is the $1$-skeleton of a cube, which describes which superbases share common bases.

To compare Bestvina and Savin's complex to the Apollonian kingdom studied here, consider the six faces of the cubical link.  These correspond to vectors $u$, $v$, $w$, $u+iv$, $v+iw$, $w+iu$, the edges of a tetrahedron in the Apollonian kingdom.  The eight vertices of the cube can be broken up into two collections of four, each of which forms the vertices of a tetrahedron:  the collection of diagonals of each face which connect the four vertices gives the edges of the tetrahedron.

In this way, each vertex of Bestvina and Savin's complex gives rise to two tetrahedra.  An ultrabasis tetrahedron has a natural dual, since the vectors labelling the edges adjacent to a vertex of the tetrahedron also form a superbasis.  This corresponds to the dual Descartes quadruple described by Coxeter \cite{\coxeter}, and the pairing of tetrahedra at each vertex of Bestvina and Savin.


\bibliography{app-pack-bib}{}
\bibliographystyle{plain}

\end{document}